\documentclass[reqno,oneside,11pt,letterpaper]{amsart}

\usepackage[english]{babel}
\usepackage[utf8]{inputenc}
\usepackage{amsfonts,amsmath,amsthm}
\usepackage{stmaryrd}
\usepackage{thmtools,thm-restate}
\usepackage[T1]{fontenc}
\usepackage{enumitem}
\usepackage{varioref}
\usepackage[all]{xy}
\usepackage{units}
\usepackage{hyperref}
\usepackage{graphicx}
\usepackage{amssymb}
\usepackage{mathabx}
\usepackage{times,mathptm, mathtools}
\usepackage{etoolbox}
\usepackage{tikz-cd}
\usepackage{mathrsfs}
\usepackage{mathdots}
\usepackage{fullpage}

\docsvlist{A,B,C,D,E,F,G,H,I,J,K,L,M,N,O,P,Q,R,S,T,U,V,W,X,Y,Z}

\docsvlist{A,B,C,D,E,F,G,H,I,J,K,L,M,N,O,P,Q,R,S,T,U,V,W,X,Y,Z}

\docsvlist{A,B,C,D,E,F,G,H,I,J,K,L,M,N,O,P,Q,R,S,T,U,V,W,X,Y,Z}

\docsvlist{A,B,C,D,E,F,G,H,I,J,K,L,M,N,O,P,Q,R,S,T,U,V,W,X,Y,Z}

\docsvlist{A,B,C,D,E,F,G,H,I,J,K,L,M,N,O,P,Q,R,S,T,U,V,W,X,Y,Z}

\renewcommand{\hat}{\widehat}
\newcommand{\R}{\mathbf{R}}
\newcommand{\C}{\mathbf{C}}
\newcommand{\Q}{\mathbf{Q}}
\newcommand{\Z}{\mathbf{Z}}
\newcommand{\N}{\mathbf{N}}
\newcommand{\A}{\mathbf{A}}

\newcommand{\K}{\mathbf{K}}
\renewcommand{\P}{\mathbf{P}}

\newcommand{\m}{\mathfrak{m}}

\newcommand{\G}{\mathbb G}

\renewcommand{\div}{\operatorname{div}}
\DeclareMathOperator{\QAlb}{QAlb}
\DeclareMathOperator{\Pic}{Pic}
\DeclareMathOperator{\Div}{Div}
\DeclareMathOperator{\Gal}{Gal}
\DeclareMathOperator{\ord}{ord}
\newcommand{\OO}{\mathcal O}
\DeclareMathOperator{\id}{id}

\DeclareMathOperator{\fin}{f}

\DeclareMathOperator{\Supp}{Supp}
\DeclareMathOperator{\supp}{Supp}

\renewcommand{\mod}{\text{mod}}
\DeclareMathOperator{\NS}{NS}

\DeclareMathOperator{\Ind}{Ind}

\DeclareMathOperator{\Per}{Per}

\DeclareMathOperator{\spec}{Spec}

\DeclareMathOperator{\Aut}{Aut}
\DeclareMathOperator{\SL}{SL}
\DeclareMathOperator{\GL}{GL}

\newcommand{\DivInf}{\Div_\infty}
\DeclareMathOperator{\car}{char}
\DeclareMathOperator{\an}{an}

\DeclareMathOperator{\BD}{\partial_X S_F}
\DeclareMathOperator{\tr.deg}{tr.deg}
\DeclareMathOperator{\Cinf}{Cartier_\infty}
\DeclareMathOperator{\Winf}{Weil_\infty}
\DeclareMathOperator{\cNS}{c-NS}
\DeclareMathOperator{\wNS}{w-NS}

\newtheorem{thm}{Theorem}[section]
\newtheorem{thm*}{Theorem}
\newtheorem{bigthm}{Theorem}
\newtheorem{prop}[thm]{Proposition}
\newtheorem{cor}[thm]{Corollary}
\newtheorem{lemme}[thm]{Lemma}
\newtheorem{conj}[thm]{Conjecture}
\newtheorem{claim}[thm]{Claim}
\theoremstyle{definition}
\newtheorem{dfn}[thm]{Definition}
\newtheorem{ex}[thm]{Example}
\newtheorem{rmq}[thm]{Remark}

\AtBeginEnvironment{dfn}{%
  \setlist[enumerate]{label={(\roman*)}}
}

\AtBeginEnvironment{thm}{%
  \setlist[enumerate,1]{label={(\arabic*)}}
}

\AtBeginEnvironment{prop}{%
  \setlist[enumerate,1]{label={(\arabic*)}}
}

\AtBeginEnvironment{lemme}{%
  \setlist[enumerate,1]{label={(\arabic*)}}
}

  \setcounter{tocdepth}{1}

\begin{document}
\author{Marc Abboud \\ Université de Neuchâtel}
\title{Rigidity of periodic points for loxodromic automorphisms of affine surfaces}
\address{Marc Abboud, Institut de mathématiques, Université de Neuchâtel
\\ Rue Emile-Argand 11 CH-2000 Neuchâtel}
\email{marc.abboud@normalesup.org}
\subjclass[2020]{37F10 37F80 37P05 37P30 37P55 32H50 14G40 11G50}
\keywords{Unlikely intersections, loxodromic automorphisms, affine surface, canonical heights, adelic divisors, adelic
line bundles}
\thanks{The author acknowledge support by the Swiss National Science Foundation Grant “Birational transformations of higher dimensional varieties” 200020-214999.}
\maketitle

\begin{abstract}
   We show that two automorphisms of an affine surface with dynamical degree $>1$ share a Zariski dense set of periodic
   points if and only if they have the same periodic points. We construct canonical heights for these automorphisms and
   use arithmetic equidistribution for adelic line bundles over quasiprojective varieties following the work of Yuan and
   Zhang. When the base field is not a number field or the function field of a curve we use the theory of Moriwaki
   heights to prove the result.
\end{abstract}
\tableofcontents
\section{Introduction}
\subsection{Loxodromic automorphisms}\label{subsec:loxodromic-automorphisms}
Let $F$ be a field, a \emph{variety} over $F$ is a normal separated integral scheme over $F$.
An affine surface is an affine variety of dimension 2. Let $S_F$ be an affine surface over $F$. A \emph{completion} of
$S_F$ is a projective surface $X$ over $F$
together with an open embedding $S_F \hookrightarrow X$. If $f$ is an automorphism of $S_F$, then $f$ defines a
birational selfmap on every completion of $S_F$. The first dynamical degree $\lambda_1 (f)$ is defined as the following limit
\begin{equation}
  \lambda_1 (f) := \lim_N \left( \left(f^N\right)^* H \cdot H \right)^{1/N}
  \label{eq:}
\end{equation}
where $H$ is an ample divisor over a completion $X$ of $S_F$. The limit is well defined and does not depend on the
choice of $X$ nor $H$. We have that $\lambda_1 (f) \geq 1$ and an automorphism with $\lambda_1(f) > 1$ is called
\emph{loxodromic}.

\subsection{Rigdity of periodic points}\label{subsec:rigidity-periodic-points}
If $f \in \Aut (S_F)$, we write $\Per(f)$ for the set of $\overline F$-periodic points of $f$ in
$S_F$ (we will show in this paper that any periodic point of $f$ must be defined over $\overline F$ if $f$ is
loxodromic). The main result of
this paper is the following
\begin{bigthm}\label{bigthm:rigidity-periodic-points}
  Let $S_F$ be a normal affine surface over a field $F$ and let $f,g \in \Aut(S_F)$ be two loxodromic automorphisms,
  then we have the following equivalence
  \begin{enumerate}
    \item $\Per(f) \cap \Per(g)$ is Zariski dense,
    \item $\Per (f) = \Per(g)$.
  \end{enumerate}
\end{bigthm}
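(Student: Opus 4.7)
The implication $(2) \Rightarrow (1)$ is immediate granted that $\Per(f)$ is itself Zariski dense, which for a loxodromic surface automorphism should follow independently from the abundance of saddle periodic points. The substantive direction is $(1) \Rightarrow (2)$, and I would approach it via the Yuan--Zhang arithmetic equidistribution plus Hodge index strategy, adapted from the projective to the quasiprojective setting of $S_F$. The first step is to fix a suitable projective completion $X$ of $S_F$ and to construct canonical adelic divisors $\overline{D}_f^{\pm}$ on $X$ in the Yuan--Zhang formalism of adelic line bundles over quasiprojective varieties, satisfying the dynamical functional equations $f^{*}\overline{D}_f^{\pm} = \lambda_1(f)^{\pm 1}\,\overline{D}_f^{\pm}$. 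Their sum yields a canonical height $\hat{h}_f : S_F(\overline{F}) \to \R_{\geq 0}$ which is non-negative, vanishes exactly on $\Per(f)$, and in particular forces every periodic point to be algebraic over $F$. The same construction produces $\overline{D}_g^{\pm}$ and $\hat{h}_g$.

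Assume now that $\Per(f) \cap \Per(g)$ is Zariski dense, and choose a generic sequence $\{x_n\}$ of Galois orbits in this intersection. By construction $\hat{h}_f(x_n) = \hat{h}_g(x_n) = 0$ for every $n$. Arithmetic equidistribution in the Yuan--Zhang framework for adelic line bundles over quasiprojective varieties, applied at every place $v$ of $F$, then produces convergence of the Galois-orbit measures of $x_n$ to the canonical curvature measures associated with $\overline{D}_f^{\pm}$ and, independently, with $\overline{D}_g^{\pm}$. Hence these two canonical currents coincide at every place.

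From this place-wise coincidence I would deduce equality of the zero loci of $\hat{h}_f$ and $\hat{h}_g$ via the local arithmetic Hodge index theorem for vertical adelic line bundles extended in the paper. Concretely, one computes the arithmetic self-intersection of a difference of the form $\overline{D}_f^{\pm} - c\,\overline{D}_g^{\pm}$ for a suitably chosen $c > 0$: equidistribution forces the top-degree intersection number to vanish, and the equality case of Hodge index then forces this difference to be vertical of a very restrictive type, trivial on heights. Hence $\hat{h}_f$ and $\hat{h}_g$ have the same zero set, which is exactly the assertion $\Per(f) = \Per(g)$.

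The main obstacle is precisely this final Hodge index step. In the projective setting, the Hodge index inequality and its equality case are by now classical, but on the quasiprojective surface $S_F$ the divisors $\overline{D}_f^{\pm}, \overline{D}_g^{\pm}$ carry genuine information along the boundary $X \setminus S_F$ and are not integrable in any naive sense; the intersection numbers that feed into Hodge index must be interpreted within an adelic, boundary-sensitive framework. Identifying the correct class of vertical adelic line bundles on quasiprojective varieties for which Hodge index holds \emph{with} a usable equality case, and then verifying that $\overline{D}_f^{\pm} - c\,\overline{D}_g^{\pm}$ belongs to that class, is the technical crux of the argument; it is exactly what the local arithmetic Hodge index theorem advertised in the abstract is designed to supply. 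With that result in hand, the rest of the proof should be a faithful adaptation of the projective rigidity argument.
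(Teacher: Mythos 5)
Your high-level outline matches the paper's architecture (canonical adelic divisors $\overline{\theta}_f^{\pm}$ with $f^*\overline{\theta}_f^{\pm}=\lambda_1(f)^{\pm1}\overline{\theta}_f^{\pm}$, canonical heights, Yuan--Zhang equidistribution over quasiprojective varieties, an arithmetic Hodge index input), and you correctly locate the technical crux in the equality case of a local Hodge index theorem. But the way you want to \emph{use} that theorem --- applying Hodge index directly to $\overline{D}_f^{\pm} - c\,\overline{D}_g^{\pm}$ --- does not go through, and this is a genuine gap, not just a wording issue. The local arithmetic Hodge index theorem developed in the paper is stated for \emph{strongly compactly supported vertical} metrised line bundles: the underlying geometric divisor must be trivial, and the Green function must be (approximable by things) constant outside a compact subset of $S_F^{\mathrm{an}}$. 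For two distinct loxodromic automorphisms $f$ and $g$, the classes $\theta_f^{\pm}$ and $\theta_g^{\pm}$ are eigenvectors of different operators on the Picard--Manin space at infinity; they have different supports and multiplicities along the boundary $X\setminus S_F$, and no scalar $c>0$ makes $\theta_f^{\pm} - c\,\theta_g^{\pm}$ vanish. So the difference $\overline{D}_f^{\pm} - c\,\overline{D}_g^{\pm}$ is never a vertical adelic divisor, its metric grows at infinity, and the Hodge index statement you want to invoke simply does not apply to it. (This is exactly the point where the affine picture breaks symmetry with the polarised projective case, where one can arrange the underlying ample class to be the same.)

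What the paper does instead is to avoid ever comparing $\overline{\theta}_f$ and $\overline{\theta}_g$ at the level of adelic intersection theory. Having obtained $\mu_{f,w}=\mu_{g,w}$ for $\mu_{\overline{H}}$-almost every $w$ from equidistribution (Theorem~\ref{thm:same-equilibrium-measure-almost-everywhere}), it proves a \emph{single-map} statement (Theorem~\ref{thm:polynomial-convex-hull}): $\{G_{f,w}=0\}$ is the polynomial convex hull of $\operatorname{Supp}\mu_{f,w}$ with respect to $\mathcal{O}(S_F)$. This is where the Calabi theorem (Corollary~\ref{CorCalabiThm}), hence the local Hodge index theorem, is actually used --- but it is applied to a difference $g_N - g_N'$ of Green functions built from $f$ alone and a fixed regular function $P$, a difference which genuinely \emph{is} strongly compactly supported by the growth estimates~\eqref{eq:inequality-1}--\eqref{eq:inequality-2}. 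Equality of measures then gives equality of polynomial convex hulls, hence $\{G_{f,w}=0\}=\{G_{g,w}=0\}$, then the integral formula for Moriwaki heights (Corollary~\ref{cor:height-as-integral}) gives $\{h_f^{\overline{H}}=0\}=\{h_g^{\overline{H}}=0\}$ for every Moriwaki polarisation, and finally the conclusion $\Per(f)=\Per(g)$ follows from the vector-height and Northcott argument combined with Proposition~\ref{prop:moriwaki-condition}. Two further omissions in your write-up, secondary but real: the paper first dispatches the case $\operatorname{QAlb}(S_F)\neq 0$ by showing $S_F\cong\G_m^2$ and treating that case directly; and when $F$ is not a number field the equidistribution statement only produces equality of measures for $\mu_{\overline H}$-almost every place $w$, which forces the detour through Moriwaki polarisations rather than a naive place-by-place argument.
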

The proof is based on arithmetic dynamics techniques. We construct Green functions and canonical heights for loxodromic
automorphisms of affine surfaces. This work is based on a previous work of the author
\cite{abboudDynamicsEndomorphismsAffine2023} where the dynamics of loxodromic automorphisms of affine surfaces is
classified.

This kind of results are called \emph{unlikely intersections} result in the litterature. Baker and DeMarco in
\cite{bakerPreperiodicPointsUnlikely2011} showed this theorem for
rational transformations of $\P^1$ over $\C$ and Yuan and Zhang showed it in \cite{yuanArithmeticHodgeIndex2017,
yuanArithmeticHodgeIndex2021} for polarised endomorphisms of projective varieties over any field. Carney then
generalised the result of Yuan and Zhang to positive characteristic in \cite{carneyArithmeticHodgeIndex2020}. In
\cite{cantatInvariantMeasuresLarge2023} Cantat and Dujardin showed the following related results: If $X$ is a complex projective
surface and $\Gamma \subset \Aut (X)$ is a large subgroup of automorphism then the set of finite orbits of $\Gamma$ is
not Zariski dense unless $X$ is a Kummer surface, i.e a finite equivariant quotient of an abelian surface.

The first instance of such a result for non projective varieties is the result of Dujardin and Favre from
\cite{dujardinDynamicalManinMumford2017} for Hénon maps of
the affine plane. They showed that if $F$ is a number field, then $\Per(f) \cap \Per(g)$ is Zariski dense if and only if
$f,g$ share a common iterate. We will call this equivalence \emph{strong rigidity of periodic points}. The authors show
in loc.cit that strong rigidity
of periodic points also holds for Hénon maps over $\C$ if the Jacobian of $f$ is not a root of unity.
\subsection{The counterexample of the algebraic torus}\label{subsec:counterexample-torus}
We cannot expect that strong rigidity of periodic points holds for any affine surface. Indeed suppose $S_F = \G_m^2$ is the
algebraic torus. Every matrix $A = \begin{pmatrix}a & b \\c & d \end{pmatrix}$ in $\SL_2(\Z)$ defines a monomial automorphism
\begin{equation}
  f_A (x,y) = \left( x^a y^b, x^c y^b \right)
  \label{eq:}
\end{equation}
and $\lambda_1(f_A) = \rho (A)$ the spectral radius of $A$. For any loxodromic monomial automorphism $f$ of $\G_m^2$,
$\Per(f) = \mathbb U \times \mathbb U$ where $\mathbb U$ is the set of roots of unity in $\overline F$. Thus, any two
loxodromic monomial automorphisms of $\G_m^2$ have the same periodic points but do not necessarily share a common iterate.

Consider the involution $\sigma (x,y) = (x^{-1}, y^{-1})$, then $\sigma$ commutes with any monomial automorphims. The
quotient $\G_m^2 / \langle \sigma \rangle$ is a normal affine surface that we denote by $M_4$ (this notation will be
justified in the following paragraph). Every monomial loxodromic automorphism of $\G_m^2$ descends to a loxodromic
automorphism over $M_4$. Thus, the normal affine surface $M_4$ yields also a counterexample to the strong rigidity of
periodic points. we conjecture the following

\begin{conj}\label{conj:strong-rigidity}
  Let $F$ be a field of characteristic zero and $S_F$ a normal affine surface over $F$. If $f,g \in \Aut(S_F)$ are
  loxodromic automorphisms,
  then the following are equivalent
  \begin{enumerate}
    \item $\Per(f) \cap \Per(g)$ is Zariski dense.
    \item There exists $N,M \in \Z \setminus \left\{ 0 \right\}$ such that $f^N = g^M$.
  \end{enumerate}
  Unless, $S_F$ is an equivariant quotient of $\G_m^2$.
\end{conj}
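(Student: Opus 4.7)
The plan is to bootstrap from Theorem~\ref{bigthm:rigidity-periodic-points}. Under condition~(1) the theorem already yields $\Per(f)=\Per(g)$, and the task reduces to promoting this set-theoretic equality to the dynamical relation $f^N=g^M$. I would follow the broad strategy of Yuan--Zhang and Dujardin--Favre, transported to the affine setting through the canonical heights and adelic line bundles constructed in the body of the paper. Write $\hat h_f$ and $\hat h_g$ for the canonical heights of $f$ and $g$; by construction they vanish precisely on $\Per(f)$ and $\Per(g)$, so under the hypothesis $\{\hat h_f=0\}=\{\hat h_g=0\}$ inside $S_F(\overline F)$.

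The next step is to compare $\hat h_f$ with $\hat h_g$ by arithmetic means. Applying the equidistribution theorem for small points in the quasi-projective adelic framework to a Zariski-dense sequence of common periodic points forces equality of equilibrium measures $\mu_{f,v}=\mu_{g,v}$ at every place $v$; the local arithmetic Hodge index theorem announced in the abstract should then imply that the adelic line bundles $\oL_f$ and $\oL_g$ attached to $f$ and $g$ are proportional up to a vertical adjustment, so that the stable and unstable Green currents of $f$ and $g$ coincide up to scaling. From here the problem becomes geometric: two loxodromic automorphisms of a surface sharing stable and unstable foliations must admit commuting iterates, a fact one extracts from an analysis of the centraliser of $f$ in $\Aut(S_F)$ along the lines of Cantat's work on surface automorphisms. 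When $S_F$ is not an equivariant quotient of $\G_m^2$, the classification of loxodromic dynamics from \cite{abboudDynamicsEndomorphismsAffine2023} should rule out any continuous translation-invariant symmetry, and a virtually cyclic centraliser then produces integers $N,M$ with $f^N=g^M$.

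The hardest step is this last rigidity statement for the centraliser. The arithmetic Hodge index theorem only determines $\oL_g$ up to a place-by-place vertical ambiguity, so turning mere proportionality of currents into a genuine algebraic identity requires excluding, on every non-toric affine surface, continuous families of loxodromic automorphisms commuting with $f$. A further subtle point, already visible in Dujardin--Favre, is the role of the Jacobian: in characteristic zero one must match not only the dynamics but also the logarithmic Jacobians of $f$ and $g$, which I expect to follow from a product formula applied to the derivative at a shared periodic orbit. A final delicate issue is the precise algebro-geometric characterisation of ``equivariant quotient of $\G_m^2$'', which must be robust enough that any counterexample produced by a translation-invariant Green current falls inside this class; making this exhaustive is, in my view, the core difficulty of the conjecture.
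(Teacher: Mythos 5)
This statement is \emph{labelled as a conjecture} in the paper (Conjecture~\ref{conj:strong-rigidity}), and the paper offers no proof of it. What the paper proves is the weaker Theorem~\ref{bigthm:rigidity-periodic-points} (coincidence of $\Per(f)$ and $\Per(g)$ given a Zariski-dense intersection), together with two special cases of the conjecture: Theorem~\ref{bigthm:strong-rigidity-henon} for H\'enon maps over $\C$, and Theorem~\ref{bigthm:strong-rigidity-markov} for Markov surfaces with transcendental parameter. So there is no ``paper's own proof'' to compare your argument against; you are sketching an attack on an open problem.

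On the merits of your sketch: the first half (equidistribution $\Rightarrow$ equality of equilibrium measures $\Rightarrow$ equality of generalised Julia sets via the local Hodge index/Calabi theorem) is indeed what the paper establishes, culminating in $\Per(f)=\Per(g)$ and $\{h_f^{\overline H}=0\}=\{h_g^{\overline H}=0\}$. But you yourself flag the genuine gap in the second half: promoting coincidence of invariant currents or Julia sets to a commutation relation $f^N=g^M$ via a centraliser argument is exactly the step that is not known in the generality of arbitrary normal affine surfaces, and the exceptional locus ``equivariant quotient of $\G_m^2$'' has no established characterisation robust enough to absorb all potential counterexamples. (Note also that your Jacobian-matching heuristic is suspect: the paper's Theorem~\ref{bigthm:strong-rigidity-henon} explicitly \emph{removes} the Jacobian hypothesis present in Dujardin--Favre, using the Green--Siciak function / polynomial hull comparison of $\{G_{f,w}=0\}$ and $\{G_{g,w}=0\}$ rather than any derivative condition.) In short: your outline reconstructs the known half of the argument and correctly identifies, without closing, the open difficulty that makes this a conjecture rather than a theorem.
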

This is the affine counterpart of the result of Cantat and Dujardin for projective surfaces.

\subsection{Strong rigidity of periodic points, results towards Conjecture
\ref{conj:strong-rigidity}}\label{subsec:strong-rigidity-intro}
We manage to show strong rigidity of periodic points for certain normal affine surfaces. First for Hénon maps, we
strengthen the result of Dujardin and Favre by removing the condition on the Jacobian
\begin{bigthm}\label{bigthm:strong-rigidity-henon}
  Let $f,g$ be two Hénon maps over $\C$, then $\Per(f) \cap \Per(g)$ is Zariski dense in $\C^2$ if and only if there
  exists $N,M \neq 0$ such that $f^N = g^M$.
\end{bigthm}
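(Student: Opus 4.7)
The direction $f^N = g^M \Rightarrow \Per(f) \cap \Per(g)$ Zariski dense is trivial, so the content is the converse. The plan is to follow the overall strategy of Dujardin and Favre \cite{dujardinDynamicalManinMumford2017}, but to replace the step of their argument that requires $\mathrm{Jac}(f)$ not to be a root of unity by the quasiprojective arithmetic machinery built in this paper.

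First I would apply Theorem~\ref{bigthm:rigidity-periodic-points} to two Hénon maps $f,g$ to upgrade Zariski density of $\Per(f) \cap \Per(g)$ to the equality $\Per(f) = \Per(g)$. Then, by spreading $f$ and $g$ out over a finitely generated $\Z$-algebra and specializing, one reduces to the case where $f,g$ are defined over a number field $K$ while preserving the equality $\Per(f_K) = \Per(g_K)$ and its Zariski density at a generic specialization. Over $K$, the canonical heights $\hat h_f, \hat h_g \colon \A^2(\overline K) \to \R_{\geq 0}$ constructed in this paper vanish precisely on $\PrePer(f)$ and $\PrePer(g)$ respectively, so they both vanish on a common Zariski dense set. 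Arithmetic equidistribution in the quasiprojective Yuan--Zhang framework developed here, combined with the extended local arithmetic Hodge index theorem for vertical adelic line bundles, then forces the dynamical Green functions $G^\pm_f$ and $G^\pm_g$ to be proportional at every place of $K$, in particular at the archimedean place relevant to $\C$. Once this equality of Green functions is in hand, the final, purely dynamical part of the Dujardin--Favre argument (which does not use the Jacobian hypothesis) yields $f^N = g^M$ for some nonzero integers $N,M$, and this relation lifts from $K$ back to $\C$ because the locus where such an equality holds is Zariski closed in parameter space.

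The main obstacle is the Hodge index step. In Dujardin--Favre this step was carried out on a projective compactification of $\A^2$, where controlling the boundary behaviour of the relevant adelic metrics on the exceptional divisors at infinity forced $\mathrm{Jac}(f)$ to be non-torsion. The arithmetic Hodge index theorem for vertical adelic line bundles on a quasiprojective surface, which is the third main contribution of this paper, is precisely designed to bypass this projective bookkeeping and should therefore allow the proportionality of Green functions to be established unconditionally; the main technical point to verify is that the vertical adelic line bundles attached to the canonical heights of a Hénon map lie in the class to which the new Hodge index theorem applies.
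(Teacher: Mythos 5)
Your outline correctly identifies the key algebraic–dynamical ingredient (replacing Dujardin--Favre's Jacobian hypothesis by the unconditional polynomial-hull characterization coming from the quasiprojective arithmetic Hodge index theorem), but the route you take to get there has a genuine gap at the specialization step.

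You propose to spread $f,g$ out over a $\Z$-algebra and specialize to a number field $K$ ``while preserving Zariski density,'' then run equidistribution over $K$. This is precisely the move the paper is designed to avoid, and for a concrete reason stated in the introduction: a set of $\overline F$-points that is Zariski dense in the generic fiber of a family of \emph{surfaces} need not specialize to a Zariski dense set in any closed fiber. In dimension one this preservation is automatic, but for $\A^2$ a Zariski dense set of common periodic points can collapse onto a curve after any specialization, so there is no way to guarantee the hypothesis of the equidistribution theorem over $K$. Two further difficulties compound this: specialization only gives a containment $\Per(f)\text{ specialized} \subset \Per(f_K)$, not an equality of the two periodic sets, so $\Per(f_K)=\Per(g_K)$ does not follow from $\Per(f)=\Per(g)$; and even granting equality of Green functions for $f_K,g_K$ at an archimedean place of $K$, that archimedean place corresponds to an embedding $K\hookrightarrow\C$ unrelated to the original complex coefficients, so the relation $f_K^N=g_K^M$ obtained from Dujardin--Favre would have to be established with $N,M$ uniform across a Zariski dense set of specializations before it could be lifted back to $\C$.

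The paper instead works directly over the finitely generated field $F=\Q(\text{coefficients of }f,g)\subset\C$: it uses Moriwaki heights to define the canonical height on $\A^2_F$, applies the Yuan--Zhang equidistribution theorem over $F$ together with Corollary~\ref{cor:height-as-integral} and Theorem~\ref{thm:weak-limit-almost-everywhere} to get $\mu_{f,w}=\mu_{g,w}$ for $\mu_{\overline H}$-almost every $w$, and then invokes Theorem~\ref{thm:polynomial-convex-hull} (the arithmetic-Hodge-index consequence) to conclude $\{G_{f,w}=0\}=\{G_{g,w}=0\}$ at some \emph{archimedean} place $w$ of $F$ lying over the original embedding $F\hookrightarrow\C$. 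Only then does the Siciak-function uniqueness argument kick in, yielding $\max(G_f^+,G_f^-)=\max(G_g^+,G_g^-)$ over $\C^2$ itself, from which Dujardin--Favre's purely dynamical argument gives $f^N=g^M$. No specialization is performed, and the place $w$ lives over the prescribed embedding into $\C$, so the conclusion is obtained directly over $\C$.
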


Let us introduce a family of affine surfaces: the family of Markov surfaces $\cM_D$. Let $D \in \C$.
The Markov surface $\cM_D$ is defined as the affine surface in $\A^3_\C$ given by the following equation
\begin{equation}
  x^2 + y^2 + z^2 = xyz +D.
  \label{eq:<+label+>}
\end{equation}
This family of surfaces has been well studied (see for example the introduction of \cite{cantatBersHenonPainleve2009}
and \cite{abboudUnlikelyIntersectionsProblem2024}). The automorphism group of $\cM_D$ does not depend on the parameter $D$
and is equal to $\SL_2(\Z)$ up to finite index. The parameter $D = 4$ is very peculiar as it admits a 2:1 cover
\begin{equation}
  (u,v) \in \G_m^2 \mapsto \left( u+ \frac{1}{u}, v + \frac{1}{v}, uv + \frac{1}{uv} \right) \in M_4.
  \label{eq:<+label+>}
\end{equation}
This map is exactly the quotient map $\G_m^2 \rightarrow \G_m^2 / \langle \sigma \rangle$. The parameter $D=4$ is the
only one where $\cM_D$ is a finite quotient of $\G_m^2$. In \cite{abboudUnlikelyIntersectionsProblem2024}, the author
shows the property of strong rigidity of periodic points for the surface $\cM_D$ defined over $\Q(D)$ for certain
algebraic parameters $D \in \overline \Q$. Theorem B of \cite{abboudUnlikelyIntersectionsProblem2024} along with Theorem
\ref{bigthm:rigidity-periodic-points} of this paper imply the strong rigidity of periodic points for transcendental
parameters.

\begin{bigthm}\label{bigthm:strong-rigidity-markov}
  Let $D\in \C$ be transcendental. If $f,g \in \Aut(\cM_D)$ are loxodromic automorphisms, then $\Per(f) \cap
  \Per(g)$ is Zariski dense if and only if there exists $N,M \neq 0$ such that $f^N = g^M$.
\end{bigthm}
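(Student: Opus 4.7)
The reverse implication is immediate: if $f^N = g^M$ then $\Per(f) = \Per(f^N) = \Per(g^M) = \Per(g)$, and this set is Zariski dense since $g$ is loxodromic. I therefore focus on the forward direction. The plan is to reduce to Theorem~B of \cite{abboudUnlikelyIntersectionsProblem2024}, which establishes strong rigidity for the Markov surface $\cM$ viewed over the rational function field $\Q(D)$, by using Theorem~\ref{bigthm:rigidity-periodic-points} to perform a descent from $\C$ to $\Q(D)$.

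Assuming $\Per(f) \cap \Per(g)$ is Zariski dense in $\cM_D$, the first step is to apply Theorem~\ref{bigthm:rigidity-periodic-points} over the base field $\C$ to obtain the set-theoretic equality $\Per(f) = \Per(g)$ inside $\cM_D(\C)$.

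The second step is a descent to the function field. Since $D \in \C$ is transcendental over $\Q$, the inclusion $\Q(D) \hookrightarrow \C$ exhibits $\cM_D$ as the base change of the Markov surface $\cM$ over $\Q(D)$. As recalled in the introduction, $\Aut(\cM_D)$ does not depend on $D$ and is generated (up to finite index) by the $\SL_2(\Z)$-action together with Vieta-type sign involutions, all of which are given by polynomial formulas with integer coefficients. Hence $f$ and $g$ descend to automorphisms $\tilde f, \tilde g$ of $\cM$ over $\Q(D)$. Periodic points of $\tilde f$ are algebraic over $\Q(D)$ and, under the fixed embedding $\overline{\Q(D)} \hookrightarrow \C$, correspond bijectively to periodic points of $f$ in $\cM_D(\C)$, and similarly for $g$. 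Therefore the equality $\Per(\tilde f) = \Per(\tilde g)$ holds inside $\cM(\overline{\Q(D)})$, and in particular $\Per(\tilde f) \cap \Per(\tilde g)$ is Zariski dense in $\cM$ over $\Q(D)$.

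In the third step, I apply Theorem~B of \cite{abboudUnlikelyIntersectionsProblem2024} to the loxodromic pair $(\tilde f, \tilde g)$ on $\cM_{\Q(D)}$, producing nonzero integers $N, M$ with $\tilde f^N = \tilde g^M$ in $\Aut(\cM_{\Q(D)})$. Base-changing this equality of polynomial maps along $\Q(D) \hookrightarrow \C$ yields $f^N = g^M$ in $\Aut(\cM_D)$, which is what we wanted. The main substance of the argument is Theorem~\ref{bigthm:rigidity-periodic-points} itself, the new input of the present paper; the remaining steps amount to a routine descent, whose only nontrivial check is that $f, g$ genuinely descend to $\Q(D)$, and this is guaranteed by the explicit $D$-independent description of $\Aut(\cM_D)$.
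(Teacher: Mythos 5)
Your proof is correct and follows the same approach the paper indicates (but does not spell out): apply Theorem~\ref{bigthm:rigidity-periodic-points} to pass from Zariski-dense intersection to $\Per(f)=\Per(g)$, descend to $\Q(D)$ using the $D$-independent description of $\Aut(\cM_D)$ and Corollary~\ref{cor:periodic-points-are-algebraic}, and invoke Theorem~B of \cite{abboudUnlikelyIntersectionsProblem2024} to produce the common iterate. The paper leaves these details implicit, so your write-up usefully records the descent bookkeeping.
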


\subsection{Plan of the paper}\label{subsec:plan-paper}
This paper contains two parts. First, we recall the notion of adelic line bundles and adelic divisors over a
quasiprojective variety. When $F$ is not algebraic over $\Q$ or is of positive characteristic we will use the notion of
Moriwaki heights. The main idea is that Moriwaki heights can be interpreted as an integral of local intersection
numbers. This corresponds to Sections \ref{sec:terminology} to \S\ref{sec:geometric-setting}.

In the second part, we prove the main theorems stated in this introduction. To do so, we will prove that a loxodromic
automorphisms admits two invariants adelic divisors
and use the arithmetic equidistribution theorem from \cite{yuanAdelicLineBundles2023}. The main difference with former
proofs of such result is that if the base field is not a number field or a function field with transcendence degree one
we will use Moriwaki heights instead of a specialisation argument. The reason is that a set of points which is
Zariski dense in the generic fiber might never be Zariski dense after specialisation (in dimension $\geq 2$).
\subsection*{Acknowledgments}
Part of this work was done during my PhD thesis. I would like to thank my PhD advisors Serge Cantat and Junyi Xie for their
guidance. I also thank Xinyi Yuan for
our discussions on adelic divisors. Part of this paper was
written during my visit at Beijing International Center for Mathematical Research which I thank for
its welcome. Finally, I thank the France 2030 framework programme Centre Henri
Lebesgue ANR-11-LABX-0020-01 and
European Research Council (ERCGOAT101053021) for creating an attractive mathematical environment.

\section{Terminology}\label{sec:terminology}
Let $R$ be an integral domain. A variety over $R$ is a flat, normal, integral, separated scheme over $\spec R$ locally of
finite type. If $R = \K$ is a field we require in addition that a variety over $\K$ is geometrically irreducible.

\subsection{Divisors and line bundles}\label{subsec:divisors}
Let $X$ be a normal $R$-variety, a \emph{Weil divisor} over $X$ is a formal sum of irreducible codimension 1 closed
subvarieties of $X$ with integer coefficients. A \emph{Cartier divisor} over $X$ is a global section of the sheaf
$\mathcal K_X^\times / \OO_X^\times$ where $\mathcal K_X$ is the sheaf of rational functions over $X$ and $\OO_X$ the
sheaf of regular functions over $X$. If $\A = \Q$ or $\R$, an $\A$-Cartier divisor is a formal sum $D = \sum_i a_i D_i$
where $a_i \in \A$ and $D_i$ is a Cartier divisor. An $\A$-Weil divisor is a formal sum of irreducible codimension 1
closed subvarieties with coefficients in $\A$. 

If $R$ is Noetherian (which will always be the case in this paper), the local ring at the generic point of an
irreducible codimension 1 closed subvariety is a Noetherian regular local ring of dimension 1 and thus a discrete
valuation ring. This implies that every $\R$-Cartier divisor induces a unique $\R$-Weil divisor. If $X$ is a projective
variety over a field $K$ and $D$ a Weil divisor over $X$, we write $\Gamma (X,D)$ for the set
\begin{equation}
  \Gamma (X,D) = \left\{ f \in K (X)^\times : \div (f) + D \geq 0 \right\}.
  \label{eq:<+label+>}
\end{equation}
If $X$ is a projective variety over $R$ and $D$ is a Cartier divisor, we write $\OO_X(D)$ for the line bundle associated to $D$.
If $R = \K$ is a field and $L$ is a line bundle over $X$. We write $H^0 (X,L)$ for the space of global sections of $L$. 

\subsection{Horizontal and vertical subvarieties}\label{subsec:horizontal-vertical-subvarieties}

Let $q : X \rightarrow Y$ be a
proper (e.g projective) morphism of varieties with generic fiber $q_\eta : X_\eta
\rightarrow \spec \eta$ with $\eta$ the generic point of $Y$. There are two types of irreducible closed subvarieties $Z
\subset X$ in $X$:
\begin{itemize}
  \item Horizontal ones which are the closure of an irreducible closed subvariety of the generic fiber
    $X_\eta$. They are characterized by $q(Z) = Y$.
  \item Vertical ones which are such that $Z \cap X_\eta = \emptyset$, they are characterized by the fact that
    $q(Z)$ is a strict closed subvariety of $Y$.
\end{itemize}
A closed subvariety is horizontal (resp. vertical) if all of its irreducible components are. An $\R$-Weil divisor is
horizontal (resp. vertical) if its support is so. Every $\R$-Weil divisor $D$ over $X$ can be uniquely split as a sum
$D = D_{hor} + D_{vert}$ where $D_{hor}$ is a horizontal $\R$-Weil divisor and $D_{vert}$ is a horizontal one. 
Let $Z \rightarrow X$ be a blowup of $X$. We say that the blow-up is \emph{horizontal} (resp. vertical) if its center is
a horizontal (resp. vertical) subvariety.

Finally, if $U \subset Y$ is an open subset and $Z \subset X$ is a closed subvariety. We say that $Z$ \emph{lies} above
$U$, if $q (Z) \cap U \neq \emptyset$.

\subsection{Models}\label{subsec:models}
We follow the definitions from \S 2.3 of \cite{yuanAdelicLineBundles2023}.
A morphism of schemes $i : X \rightarrow Y$ is a \emph{pro-open} immersion if
\begin{enumerate}
  \item $i$ is injective on the topological spaces.
  \item for any $x \in X$, the induced map $i^* : \OO_{Y, i(x)} \rightarrow \OO_{X,x}$ is an isomorphism.
\end{enumerate}

Let $F$ be a field and let $R$ be an integral domain with an injective ring homomorphism $R \hookrightarrow F$. Let $X_F$
be a quasiprojective variety over $F$. A \emph{quasiprojective model} (resp. \emph{projective model}) of $X_F$ over
$R$ is a quasiprojective (resp. projective) variety $Y_R$ over $R$ with a pro-open immersion $X_F \rightarrow Y_R$. We
are mainly interested in the case where $F$ is a finitely generated field over the fraction field of $R$ (We say that
$F$ is finitely generated over $R$). In particular, in that case a quasiprojective model of $\spec F$ over $R$ is a
quasiprojective variety over $R$ with function field isomorphic to $F$.

If $F$ is a field and $U_F$ a quasiprojective variety over $F$, we will call a projective model of $U_F$ over $F$ a
\emph{completion} of $U_F$.

Suppose $F$ is a finitely generated field over a field $k$ with $\tr.deg F/k =d \geq 1$. Let $B_k$ be a projective model
of $F$ over $k$. If $U_F$ is a quasiprojective variety over $F$ a \emph{quasiprojective} (resp. \emph{projective}) model
of $U_F$ over $B$ is a quasiprojective (resp. projective) variety $X_B$ over $k$ with a morphism of varieties $X_B
\rightarrow B$.
\begin{lemme}[Lemma 2.3.3 of \cite{yuanAdelicLineBundles2023}]\label{lemme:structure-quasiprojective-model}
  Let $F$ be a finitely generated field over an integral domain $R$. Let $U_F$ be a quasiprojective variety over $F$ and
  $X_R$ a quasiprojective model of $U_F$ over $R$. Then, there exists an open subscheme $X_R '$ of $X_R$ and a flat
  morphism $q : X_R ' \rightarrow V_R$ of quasiprojective varieties over $R$ such that the generic fiber is
  isomorphic to $U_F \rightarrow \spec F$.

  Furthermore, if $U_F$ is projective, then we can assume that $q$ is projective.
\end{lemme}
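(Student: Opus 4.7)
The plan is to build $q$ in three stages: produce a base $V_R$ that is a model of $F$ over $R$, upgrade an induced rational map $X_R \dashrightarrow V_R$ into a morphism on an open subset containing $U_F$, and then shrink $V_R$ using generic flatness; the projective case is handled at the end by spreading out a projective embedding of $U_F$.

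First, since $F$ is finitely generated over the fraction field of $R$, I take $V_R$ to be the normalization of $\spec A$ for some finitely generated $R$-subalgebra $A \subset F$ with $\text{Frac}(A) = F$; this provides a quasiprojective model of $\spec F$ over $R$ with $K(V_R) = F$. The pro-open immersion $U_F \hookrightarrow X_R$ forces $K(X_R) = K(U_F)$, because the image $\eta$ of the generic point of $U_F$ has local ring equal to $K(U_F)$, which must be a field and hence $\eta$ is the generic point of $X_R$. The field inclusion $F = K(V_R) \hookrightarrow K(X_R)$ therefore defines a dominant rational map $X_R \dashrightarrow V_R$ over $R$.

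Next, quasiprojectivity of $V_R$ lets me extend this rational map to a morphism $q : X_R' \to V_R$ on some open $X_R' \subset X_R$. One checks $U_F \subset X_R'$: since $U_F$ is an $F$-scheme, every element of $F$ lies in $\OO_{U_F,x}$ for all $x \in U_F$, and by the pro-open condition $\OO_{U_F,x} = \OO_{X_R,x}$, so the rational functions defining $q$ in affine charts of $V_R$ are regular at every $x \in U_F$. Applying generic flatness (EGA IV.6.9.1 in the Noetherian setting, or Raynaud--Gruson in general) I find a dense open $V_R^{\circ} \subset V_R$ over which $q$ is flat, and replace $V_R$ by $V_R^{\circ}$ and $X_R'$ by its preimage. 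To ensure that the generic fiber equals $U_F$ and not a strictly larger open of the same function field $K(U_F)$, I further remove from $X_R'$ the closure (inside the generic fiber of $q$) of $X_R' \setminus U_F$; the remaining open still contains $U_F$ and now has generic fiber exactly $U_F$.

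For the projective case, if $U_F$ is projective I embed $U_F \hookrightarrow \P^N_F$, spread out the defining ideal to a closed subscheme of $\P^N_{V_R}$ whose generic fiber is $U_F$, and shrink $V_R$ so that the result is flat projective over $V_R$; I then reconcile this family with the previous construction on a common dense open of $X_R$ using the fact that two flat models of the same generic fiber over a common base agree on a dense open (after shrinking the base). The main obstacle I anticipate is precisely this last identification: pinning the generic fiber down to $U_F$ while preserving both the openness of $X_R'$ in the given $X_R$ and the inclusion $U_F \subset X_R'$, and, in the projective variant, realizing the projective spreading as such an open rather than merely as an abstract flat family over $V_R$.
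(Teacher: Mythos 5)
The paper does not prove this statement; it is imported by citation to Lemma~2.3.3 of Yuan--Zhang, so there is no local argument to compare against. Judged on its own merits, your argument for the quasiprojective case is correct: the construction of a base $V_R$, the observation that the rational map $X_R \dashrightarrow V_R$ induced by $F = K(V_R) \hookrightarrow K(X_R)$ is regular along (the image of) $U_F$ because $F \subset \OO_{U_F,x} = \OO_{X_R,x}$ for every $x \in U_F$, and the two successive shrinkings all go through. The only phrasing to tighten is the last shrinking: you should remove from $X_R'$ the closure \emph{in $X_R'$} of $(X_R')_\eta \setminus U_F$. This is legitimate because $U_F \to (X_R')_\eta$ is a flat monomorphism of finite presentation between $F$-schemes (the local rings agree, and a pro-open immersion is a monomorphism), hence an open immersion, so $(X_R')_\eta \setminus U_F$ is closed in the generic fiber and its closure in $X_R'$ meets the generic fiber only in $(X_R')_\eta \setminus U_F$, hence misses $U_F$.

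The obstacle you flag at the end for the projective variant is not actually an obstacle, and in fact the spreading-out-of-isomorphisms step that you already invoke disposes of it. You do not need to realize the spread-out family $\sW \subset \P^N_{V_R}$ as an open inside $X_R$: once you have a dense open $V_R^{\circ} \subset V_R$ and an isomorphism $\sW|_{V_R^{\circ}} \cong X_R''|_{V_R^{\circ}}$ extending the identification of generic fibers with $U_F$, the scheme $X_R''|_{V_R^{\circ}}$ is both open in $X_R$ (it is the preimage of an open under $q$) and projective over $V_R^{\circ}$ (via the transferred isomorphism), which is exactly what is asked. You can also avoid the re-spreading entirely: embed $X_R'' \hookrightarrow \P^N_{V_R}$, take the scheme-theoretic closure $\overline{X_R''}$, and observe that its generic fiber is the closure of $U_F$ in $\P^N_F$, which equals $U_F$ since $U_F$ is proper; then $Z = \overline{X_R''} \setminus X_R''$ is closed, disjoint from the generic fiber, and its image under the projective morphism $\overline{X_R''} \to V_R$ is a closed subset missing $\eta_{V_R}$, so deleting it from $V_R$ makes $X_R''$ coincide with $\overline{X_R''}$ and hence projective over the shrunken base. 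One last caveat for full generality: your construction of $V_R$ as a normalization requires $R$ to be Nagata (for finite type) and needs flatness of $V_R$ over $R$, which is automatic when $R$ is a field or a Dedekind domain as in the paper's applications but should be checked for general~$R$.
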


\begin{lemme}[Lemma 2.3.4 of \cite{yuanAdelicLineBundles2023}]
  \label{lemme:system-quasiprojective-models}
  Let $F$ be a finitely generated field over an integral domain $R$. If $U_F$ is a quasiprojective variety over $F$ and
  $X_R$ a quasiprojective model of $U_F$ over $R$, then
  the inverse system of open neighbourhoods of $U_F$ in $X_R$ is cofinal in the system of quasiprojective
  models of $U_F$ over $R$.
\end{lemme}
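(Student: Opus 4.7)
Given a quasiprojective model $Y_R$ of $U_F$, the goal is to produce an open neighbourhood $W \subset X_R$ of $U_F$ together with a morphism $W \to Y_R$ that agrees with the two pro-open immersions on $U_F$. The plan is a graph-closure argument combined with Zariski's Main Theorem; the pro-open immersion hypothesis will enter only through its key consequence that the inclusion $U_F \hookrightarrow X_R$ is a local isomorphism of schemes and, in particular, a local homeomorphism.

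First, I choose a projective completion $\overline{Y_R}$ of $Y_R$ over $R$ (which exists since $Y_R$ is quasiprojective) and form the morphism $\psi : U_F \longrightarrow X_R \times_R \overline{Y_R}$, $\psi(u) = (\iota(u), j(u))$, where $\iota : U_F \hookrightarrow X_R$ and $j : U_F \hookrightarrow Y_R$ are the given pro-open immersions. Let $\Gamma$ denote the scheme-theoretic closure of the image of $\psi$. Then $\Gamma$ is an integral closed subscheme of $X_R \times_R \overline{Y_R}$; its function field equals that of $U_F$, hence that of $X_R$, so the first projection $\pi : \Gamma \to X_R$ is birational, and it is projective since $\overline{Y_R}/R$ is.

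Next, I analyze the fibres of $\pi$ over $U_F$. Since $Y_R / \spec R$ is separated, the graph of $j$ is closed in $U_F \times_R Y_R$; and $U_F \times_R Y_R$ is pro-open in $X_R \times_R \overline{Y_R}$, being the product of a pro-open immersion with an open one. Thus for any $u_0 \in U_F$ and any $y \in Y_R$ with $(u_0, y) \in \Gamma$, computing the closure of $\psi(U_F)$ inside the pro-open $U_F \times_R Y_R$ forces $(u_0, y)$ to lie on the graph of $j$, so $y = j(u_0)$. Consequently the set-theoretic fibre $\pi^{-1}(u_0)$ consists of the isolated point $(u_0, j(u_0))$ together with points lying above $\overline{Y_R} \setminus Y_R$, and $\pi$ is quasi-finite at every graph point of $U_F$.

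The quasi-finite locus of the proper morphism $\pi$ is open in $\Gamma$, and its closed complement $Z$ maps under $\pi$ to a closed subset $\pi(Z) \subset X_R$ disjoint from $U_F$ by the previous step. Setting $W := X_R \setminus \pi(Z)$ gives an open neighbourhood of $U_F$ over which $\pi$ is proper and quasi-finite, hence finite; since $\pi$ is birational and $X_R$ is normal, this finite birational morphism to a normal scheme is an isomorphism. Composing its inverse with the second projection $\Gamma \to \overline{Y_R}$ and shrinking $W$ further to the preimage of the open $Y_R \subset \overline{Y_R}$ yields the desired morphism on an open neighbourhood of $U_F$. The subtle step is the fibre calculation, which requires that topological closure commute with restriction to a pro-open subscheme — a formal consequence of the definition of pro-open immersion, but the one place where the pro-open (rather than merely open) hypothesis is genuinely used.
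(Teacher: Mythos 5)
Your route is genuinely different from the paper's, which disposes of the lemma in one line: the birational map $X_R \dashrightarrow X_R'$ already induces an isomorphism between an open neighbourhood of $U_F$ in $X_R$ and an open subset of $X_R'$, because pro-open immersions force the stalks at $U_F$ to agree. You instead reconstruct that standard fact from scratch via a graph-closure argument and Zariski's Main Theorem. This is coherent in outline, but there is a genuine gap at the passage from the fibre calculation to the statement that $\pi(Z)$ is disjoint from $U_F$.

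In the fibre step you only show that the graph point $(u_0, j(u_0))$ is isolated in $\pi^{-1}(u_0)$, and you explicitly allow that the fibre ``consists of the isolated point $(u_0, j(u_0))$ together with points lying above $\overline{Y_R} \setminus Y_R$.'' That is not enough for the next sentence: the set $Z$ is the locus of \emph{source} points of $\Gamma$ at which $\pi$ is not quasi-finite, so if any of those extra fibre points above $\overline{Y_R}\setminus Y_R$ exist and fail to be isolated, they lie in $Z$ and $u_0 \in \pi(Z)$, destroying the construction of $W$. You need the fibre over each $u_0 \in U_F$ to be the \emph{singleton} $\{(u_0, j(u_0))\}$, not merely to contain an isolated point. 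The fix is small: you invoked separatedness of $Y_R/R$, but what you actually want is separatedness of $\overline{Y_R}/R$. Since $\overline{Y_R}$ is projective over $R$, the graph of the composite $j : U_F \to Y_R \hookrightarrow \overline{Y_R}$ is already \emph{closed} in $U_F \times_R \overline{Y_R}$, and $U_F \times_R \overline{Y_R}$ is pro-open in $X_R \times_R \overline{Y_R}$; so $\Gamma \cap (U_F \times_R \overline{Y_R}) = \psi(U_F)$ exactly, and the extra points above $\overline{Y_R}\setminus Y_R$ do not occur. (Alternatively, one may appeal to Zariski's connectedness theorem for the proper birational $\pi : \Gamma \to X_R$ to a normal target: connected fibres with an isolated point are singletons.) With that repair the rest of your proof goes through.
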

\begin{proof}
  Let $X_R'$ be another quasiprojective model of $U_F$ over $R$. Then, the birational map
  $X_R \dashrightarrow X_K'$ induces an isomorphism between an open neighbourhood of $U_F$ in $X_R$ and an
  open subset of $X_R'$.
\end{proof}

As we will very often use models of varieties over different rings and fields, we will write varieties
with a subscript indicating over which ring or field they are defined unless it burdens the notations too much.

\subsection{Measure theory and topology}\label{subsec:measures}
Let $\Omega$ be a measurable space with a positive measure $\mu$. We say that a measurable subset $A \subset \Omega$ is
of full measure if $\mu (\Omega \setminus A) = 0$. If $(A_n)_{n \geq 0}$ is a sequence of measurable sets of full
measure, then $\bigcap_n A_n$ is also a subset of full measure.
We say that a property is true for $\mu$-almost every $w \in \Omega$ if it holds for any $w \in A$ where $A \subset
\Omega$ is of full measure.

\begin{dfn}
Let $T$ be a locally compact Hausdorff space with its Borel $\sigma$-algebra. A \emph{Radon measure} over $T$ is a
positive measure $\mu$ on the Borel $\sigma$-algebra such that
\begin{enumerate}
  \item For every Borel set $V \subset T, \mu (V) = \sup_{K \subset V} \mu (K)$ where $K$ runs through compact subsets.
  \item For every Borel set $V \subset T, \mu(V) = \inf_{U \supset V} \mu (U)$ where $U$ runs through open subsets.
  \item For every $t \in T$, there exists an open neighbourhood $U$ of $t$ such that $\mu (U) < + \infty$.
\end{enumerate}
\end{dfn}

Finally, if $X$ is a topological space, then we write $\mathcal C^0 (X, \R)$ for the set of continuous function $X \rightarrow
\R$ and $\mathcal C^0_c (X, \R)$ for the set of continuous function $X \rightarrow \R$ with compact support.

\section{Analytification of algebraic varieties}
\label{sec:analyt-algebr-vari}
\subsection{Berkovich spaces}\label{subsec:berkovich-spaces} For a general reference on Berkovich spaces, we refer to
\cite{berkovichSpectralTheoryAnalytic2012}.
Let $\K_v$ be a complete field with respect to an absolute value $| \cdot |_v$. If $X_{K_v}$ is a quasiprojective
variety over $K_v$, we write $X_{\K_v}^{\an}$ for the Berkovich analytification of $X_{K_v}$ with respect to $K_v$. It is a locally ringed space with a contraction map
\begin{equation}
  c : X_{\K_v}^{\an} \rightarrow X_{\K_v}.
  \label{<+label+>}
\end{equation}
 It is a
Hausdorff space. In
particular, if $X_{\K_v}$ is proper (e.g projective), then $X_{\K_v}^{\an}$ is compact.

Let $\overline \K_v$ be an algebraic closure of $\K_v$. The absolute value $\left| \cdot \right|_v$ extends naturally to
$\overline \K_v$. If $p \in X_{\K_v}(\overline \K_v)$ is a rational point, then it defines a point in
$X_{\K_v}^{\an}$. We thus have a map
\begin{equation}
  \iota_0: X_{\K_v} (\overline \K_v) \rightarrow X_{\K_v}^{\an}
  \label{<+label+>}
\end{equation}
and we write $X_{\K_v} (\overline \K_v)$ for its image. It is a dense subset of $X_{\K_v}^{\an}$. This map is generally
not injective as two points $p,q \in X_{\K_v}(\overline \K_v)$ define the same seminorm if and only if they are in the
same orbit for the action of the Galois group $\Gal (\overline \K_v / \K_v)$.

If $\phi : X_{\K_v} \rightarrow Y_{\K_v}$ is a morphism of varieties, then there exists a unique morphism
\begin{equation}
  \phi^{\an} :
  X_{\K_v}^{\an} \rightarrow Y_{\K_v}^{\an}
\end{equation}
such that the diagram
\begin{equation}
\begin{tikzcd}
  X_{\K_v}^{\an} \ar[r, "\phi^{\an}"] \ar[d] & Y_{\K_v}^{\an} \ar[d] \\
  X_{\K_v} \ar[r, "\phi"] & Y_{\K_v}
  \label{<+label+>}
\end{tikzcd}
\end{equation}
commutes. In particular, if $X_{\K_v} \subset Y_{\K_v}$, then $X_{\K_v}^{\an}$ is isomorphic to $c_Y^{-1}(X_{\K_v})
\subset Y_{\K_v}^{\an}$.

If $\K_v$ is not algebraically closed, let $\C_v$ be the completion of the algebraic closure of $\K_v$ with respect to
$v$. If $X_{\K_v}$ is a variety over $\K_v$ and $X_{\C_v}$ is the base change to $\C_v$, then we have the following
relation for the Berkovich space
\begin{equation}
  X_{\K_v}^{\an} = X_{\C_v}^{\an} / \Gal(\overline \K_v / \K_v)
  \label{eq:<+label+>}
\end{equation}
and the continuous map $X_{\C_v}^{\an} \rightarrow X_{\K_v}^{\an}$ is proper (the preimage of a compact subset is a
compact subset) if $X_{\K_v}$ is quasiprojective.
In particular, if $\K_v = \R$ and $X_\R$ is a variety over $\R$, then
\begin{equation}
  X_\R^{\an} = X_\R (\C) / (z \mapsto \overline z).
  \label{eq:<+label+>}
\end{equation}

\subsection{Places and restricted analytic spaces}\label{subsec:places}
Let $\K$ be a number field. A \emph{place} $v$ of $\K$ is an equivalence class of absolute values over $\K$. If $v$ is
archimedean then there is an embedding $\sigma : \K \hookrightarrow \C$ such that any absolute value representing $v$ is
of the form $|x| = \left|\sigma(x) \right|^t_\C$ with $0 < t \leq 1$. In that case we will write $|\cdot|_v$ for the
absolute value with $t=1$. If $v$ is non-archimedean (we also say that $v$ is \emph{finite}) it
lies over a prime $p$ then we write $|\cdot|_v$ for the
absolute value of $\K$ representing $v$ such that $|p|_v = \frac{1}{p}$. Every finite place $v$ is of the form
\begin{equation}
  v(P) = \# \left( \OO_\K / \m \right)^{-\ord_\m (P)}
  \label{<+label+>}
\end{equation}
for $P \in \K$ where $\m$ is a maximal ideal of $\OO_\K$.

We write $\cM(\K)$ for the set of places of $\K$ and for every $v \in \cM (\K)$, we write $\K_v$ for the completion of
$\K$ with respect to $v$. If $v$ is archimedean, then $\K_v = \R$ or $\C$.
If $V \subset \cM(\K)$, we write $V[\fin]$ for the subset of finite places in $V$ and $V[\infty]$ for the archimedean ones.

Let $X_\K$ be a variety over $\K$. For every place $v$ of $\K$, define $X_v := X_\K \times_\K \spec \K_v$. Similarly, if $D$ is
an $\R$-divisor over $X_\K$ then we denote by $D_v$ its image under the base change. We write $X_v^{\an}$ for the
Berkovich analytification of $X_v$. We also define the global Berkovich analytification of $X_\K$ as
\begin{equation}
  X_\K^{\an} := \bigsqcup_v X_v^{\an}.
  \label{<+label+>}
\end{equation}
Comparing to \cite{yuanAdelicLineBundles2023}, this space is called the \emph{restricted analytic space} of $X_\K$ by Yuan
and Zhang. If $V$ is a set of places, we also define
\begin{equation}
  X_V^{\an} := \bigsqcup_{v \in V} X_v^{\an}.
  \label{<+label+>}
\end{equation}
In particular, we define
\begin{equation}
  X_\K^{\an} [\fin] := \bigsqcup_{v \in M(\K)[\fin]} X^{\an}_v, \quad X_\K^{\an}[\infty] := \bigsqcup_{v \in M(\K)[\infty]}
  X^{\an}_v
  \label{<+label+>}
\end{equation}

If $\sX_{\OO_\K}$ is a variety over $\OO_\K$, we write $\sX_v$ for the base change
\begin{equation}
  \sX_v := \sX_{\OO_\K} \times_{\OO_\K} \spec \OO_{\K_v}.
  \label{<+label+>}
\end{equation}
Similarly, if $\sD$ is an $\R$-divisor over $\sX_{\OO_\K}$, we denote by $\sD_v$ its image under the base change.

We make the following convention, if $\sX_{\OO_\K}$ is a variety over $\OO_\K$ and $V$ is a set of places of $\K$, we
write $X_\K = \sX_{\OO_\K} \times_{\OO_\K} \spec \K$ and
\begin{equation}
  \sX_{\OO_\K}^{\an, V} := X_\K^{\an,V}.
  \label{eq:<+label+>}
\end{equation}

\subsection{Over a finitely generated field}
\label{subsec:over-finit-gener}
Suppose that $F$ is a finitely generated field over $\Q$ of transcendence degree $d \geq 1$. Let $\K$ be the
algebraic closure of $\Q$ in $F$, then $\K$ is a number field. We describe the set of absolute values over $F$. Let
$\sB_{\OO_\K}$ be a projective model of $\spec \OO_\K$. That is a projective variety over $\OO_\K$ with function field
$F$. Every point in $\sB(\C)$ yields an archimedean place over $F$ and every irreducible component $\Gamma \subset \sB_\m =
\sB_{\OO_\K} \times \spec \kappa (\m)$ of every special fiber yields a non-archimedean absolute value. We write
$\cM_B (F)$ for the set of all places obtained with $\sB_{\OO_\K}$.  In what follows when we work with a finitely
generated field over $\Q$, we will set a model $\sB_{\OO_\K}$ once and for all and work with the places over that model.
In particular, like in the number field case we have a finite number of non-archimedean absolute values over a maximal
ideal of $\OO_\K$.

If $X_F$ is a quasiprojective variety over $F$, and $X_\K$ is a quasiprojective model of $X_F$ over $\K$. Then, we have
\begin{equation}
  X_F^{\an} = X_\K (\C) \cup \bigsqcup_{\Gamma \subset \sB_{\OO_\K}} X_F^{\an, \Gamma}
  \label{eq:<+label+>}
\end{equation}
and we have an embedding $X_F^{\an} \hookrightarrow X_\K^{\an}$ which is continuous,
injective, with a dense image by Lemma 3.1.1 of \cite{yuanAdelicLineBundles2023}.

\section{Local theory}\label{sec:local theory}
\subsection{Model arithmetic divisor and metrised line bundles}
Let $\K_v$ be a complete field with respect to a discrete absolute value $v$ and let $X$ be a projective variety over $\K_v$.
Let $D = \sum_i a_i D_i$ be an $\R$-Cartier divisor. A \emph{Green function} of $D$ is a continuous function
$g : X_{\K_v}^{\an} \setminus (\Supp D)^{\an} \rightarrow \R$ such that for every $q \in (\Supp D)^{\an}$ if $z_i$ is a
local equation of $D_i$ we have that
\begin{equation}
  g + \sum_i a_i \log \left| z_i \right|_v
  \label{<+label+>}
\end{equation}
extends to a continuous function at $q$. An \emph{arithmetic divisor} $\overline D$ over $X_{\K_v}$ is the data of an
$\R$-Cartier divisor $D$ over $X$ and of a Green function $g_{\overline D}$ of $D$.

A \emph{metrised} line bundle $\overline L$ over $X_{\K_v}$ is the data of a $\Q$-line bundle over $X_{\K_v}$ and a
metric on the space of sections of $L$ i.e for every $x \in X^{\an}_{\K_v}$ we have a real function $\left| \cdot
\right|_x$ on the stalk of $L$ at $x$ such that for every open subset $U \subset X_{\K_v}^{\an}$ and any $s \in H^0
(U, L)$ we have that the function
\begin{equation}
  x \in U \mapsto \left| s(x) \right|_x \in \R
  \label{<+label+>}
\end{equation}
is continuous.

Let $X_{\K_v}$ be a projective variety over $\K_v$ and let $D = \sum_i a_i D_i$ be an $\R$-Cartier divisor over $X$. A
\emph{model} of $(X,D)$ is the data of $(\sX_{\OO_v}, \sD)$ where $\sX_{\OO_v}$ is a projective variety over $\OO_v$
such that its generic fiber is $X_{\K_v}$ and $\sD = \sum_i a_i \sD_i$ is an $\R$-Cartier divisor over $\sX_{\OO_v}$
such that ${\sD_i}_{|X_{\K_v} = D_i}.$
Every model induces a Green function of $D$ over $X_{\K_v}^{\an}$ as follows. Consider the reduction map $r_{\sX_v} :
X^{\an} \rightarrow \sX_v \times_{\OO_v} \spec \kappa_v$, for any $x \in X^{\an} \setminus \Supp D$ let $z_i$ be a local
equation of $\sD_i$ at $r_{\sX_v}(x)$ then we define 
\begin{equation}
  g_{(\sX,\sD)} = - \sum_i a_i \log \left| z_i \right|.
  \label{<+label+>}
\end{equation}

Since we deal with $\R$-Cartier divisor there is some subtlety to consider here. Since $v$ is discretely valued, every
$\R$-Cartier divisor induces an $\R$-Weil divisor. The effectiveness of the $\R$-Weil divisor does not imply the
effectiveness of the $\R$-Cartier divisor in general but this is not important as we consider Green functions. In
particular we have the following lemma from \cite{abboudUnlikelyIntersectionsProblem2024}. 

\begin{lemme}\label{lemme:effectiveness-on-Green-function}
  Let $\sD$ be an $\R$-Cartier divisor over $\sX_v$. Then the induced $\R$-Weil divisor is effective if and only if
  $g_{\sX_v, \sD} \geq 0$. In particular, if $x \in X^{\an}_{\K_v}$ is such that $r_{\sX_v} (x) \not \in \Supp_W
  (\sD)$, then $g_{(\sX_v, \sD)} (x) = 0$ where $\Supp_W (\sD)$ is the support of the induced $\R$-Weil divisor.
\end{lemme}

\section{Global theory over a number field}
\label{sec:global-theory}
\subsection{Adelic divisors over a projective variety}
Let $\K$ be  a number field and $X_\K$ a projective variety over $\K$.

An adelic divisor $\overline D$ is the data of a divisor $D$ over $X_\K$ and a family
$(\overline D_v)_{v \in \cM(\K)}$ of arithmetic extensions of $D_v$ over $X_v$ such that
\begin{enumerate}
  \item If $v$ is archimedean and $\sigma : \K \hookrightarrow \K_v = \C$ is invariant by complex conjugation, then
    the Green function of $\overline D_v$ is also invariant by complex conjugation.
    \item There exists a model $(\sX_{\OO_\K}, \sD)$ of $(X_\K,D)$ over $\OO_\K$ and an open subset $V \subset \spec
      \OO_\K$ such that for every finite place $v \in V[\fin]$, $\overline D_v$ is the model arithmetic divisor induced
      by $\sD_v$.
  \end{enumerate}

Every rational function $P$ over $\K$ induces a model arithmetic divisor
\begin{equation}
  \hat \div (P) := (\div(P), (- \log |P|_v)_{v \in \cM(\K)}).
\end{equation}
Such divisors are called principal.

An adelic line bundle $\overline L$ is the data of a line bundle $L$ over $X_\K$ and a family of metrisations $(\overline
L_v)_{v \in \cM(\K)}$ of $L_v$ over $X_v$ such that there exists a model $(\sX_{\OO_\K}, \sL)$ of $(X_\K,L)$ over
$\OO_\K$ and an open subset $V \subset \spec \OO_\K$ such that for every finite place $v \in V[\fin], \overline L_v$ is
induced by the model $\sL_v$.

\begin{dfn}\label{dfn:adelic-divisors-and-line-bundles}
  \begin{itemize}
\item A \emph{model adelic divisor} $\overline D$ is an adelic divisor such that there exists a model $(\sX_{\OO_\K}, \sD)$ of
$(X_\K,D)$ such that for every finite place $v$, $\overline D_v$ is induced by $\sD_v$. In that case we write $\overline
D = \overline \sD$.
\item A \emph{model adelic line bundle} is defined similarly and written as $\overline \sL$.

\item An adelic divisor $\overline D$ (resp. adelic line bundle $\overline L$) is \emph{semipositive} if for every
$v$, $\overline D_v$ (resp. $\overline L_v$) is semipositive.
\item
An adelic divisor $\overline D$ (resp. adelic line bundle $\overline L$) is \emph{integrable} if for every
$v$, $\overline D_v$ (resp. $\overline L_v$) is integrable.

\item An adelic divisor $\overline D = (D,g)$ is \emph{effective} if $g \geq 0$ in particular this implies that
$D$ is an effective divisor. We write $\overline D \geq \overline D'$ if $\overline D - \overline D'$ is effective.
\item An adelic divisor $\overline D$ is \emph{strictly} effective if $\overline D \geq 0$ and $g[\infty] > 0$.
  \end{itemize}
\end{dfn}

\begin{ex}\label{ex:horizontal-divisors-number-field}
  Let $X_\K$ be a projective variety over $\K$ and let $D$ be an $\R$-divisor over $X_\K$. We can write $D$ as a
  $\R$-Weil divisor $D = \sum_i \lambda_i E_i$ where $E_i$ is an irreducible closed subvariety of codimension 1 in
  $X_\K$. Let $\sX_{\OO_\K}$ be a projective model of $X_\K$ over $\OO_{\K}$, since $\OO_\K$ is Noetherian, the closure
  $\overline E_i$ of $E_i$ in $\sX_{\OO_\K}$ is an irreducible codimension 1 closed subvariety of $\sX_{\OO_\K}$ and
  also a Cartier divisor. We still write $D$ for the horizontal divisor $\sum_i \lambda_i \overline E_i$ in $\sX_{\OO_\K}$.
\end{ex}

\begin{dfn}
  Let $H$ be a divisor such that $\OO_X (H)$ is globally generated and let $P_1, \dots, P_r$ be generators of $\Gamma
  (X, H)$, we call the Weil metric of $H$ with respect to $(P_1, \dots, P_r)$ the Green function of $H$ defined by
  \begin{equation}
    g(x) = \log^+ \max \left( \left| P_1 (x) \right|, \dots, \left|P_r (x)\right| \right).
    \label{eq:<+label+>}
  \end{equation}
  It yields semipositive extension of $H$. In particular, every ample divisor admits a strictly effective semipositive model.
\end{dfn}

\subsection{Global intersection number}
\label{subsec:glob-inters-numb}
If $X_\K$ is a projective variety over $\K$ of dimension $n$ and $\overline D_0, ..., \overline D_n$ are
integrable adelic divisors that intersect properly, then we define the global intersection number
\begin{equation}
  \label{eq:47}
  \overline D_0 \cdots \overline D_n = \sum_{v \in \cM(\K)} (\overline D_{0,v} \cdots \overline D_{n,v}).
\end{equation}
If any of the $\overline D_i$ is principal, then the global intersection number vanishes. Thus, we have a well
defined global intersection number for integrable adelic line bundles: if $\overline L_0, \dots, \overline L_n$ are
integrable line bundles over $X_\K$, let $s_i$ be a rational section of $L_i$ such that $\div (s_0), \dots,
\div(s_n)$ intersect properly, then
\begin{equation}
  \overline L_0 \dots \overline L_n := \hat \div(s_0) \dots \hat \div(s_n).
  \label{eq:<+label+>}
\end{equation}

\begin{prop}
  \label{prop:3}
  If every $\overline L_i$ is semipositive, then $\overline L_0 \cdots \overline L_n \geq 0$.
\end{prop}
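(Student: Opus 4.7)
The plan is to prove Proposition \ref{prop:3} by reducing via continuity to the case of nef model adelic line bundles on a common projective $\OO_\K$-model $\sX$, and then invoking the nonnegativity of top intersection numbers of nef line bundles on a projective scheme.

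First I would observe that, by multilinearity of the intersection pairing and the fact that $\hat \div(P) \cdot \overline D_1 \cdots \overline D_n = 0$ for any principal arithmetic divisor, one may work with principal adelic divisors $\hat \div(s_i)$ of rational sections whose divisors are in general position. By the very definition of semipositivity at each place, each $\overline L_{i,v}$ is a $d_\infty$-limit of nef model metrics when $v$ is non-archimedean (using Theorem \ref{thm:1} to identify nef and semipositive in the semiample case) and of smooth psh metrics when $v$ is archimedean. Moreover, since $\overline L_i$ is an adelic line bundle, the metrics at all but finitely many finite places already come from a single $\OO_\K$-model. Choosing a common projective $\OO_\K$-model $\sX$ of $X_\K$ onto which all the bad-place modifications descend (passing to a further dominating blow-up if needed), I would produce Cauchy sequences $\overline{\sL}_i^{(k)}$ of nef model adelic line bundles on $\sX$ converging to $\overline L_i$ in the $d_\infty$-topology at every place.

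By the induction formula
\begin{equation}
  \overline L_0 \cdots \overline L_n = \bigl(\overline L_0 \cdots \overline L_{n-1}\bigr)\big|_{D_n} + \sum_{v \in \cM(\K)} \int_{X_v^{\an}} g_{n,v}\, c_1(\overline L_{0,v}) \cdots c_1(\overline L_{n-1,v}),
\end{equation}
combined with the weak continuity of Chambert-Loir measures (Proposition \ref{prop:1}) and an iteration on the dimension, the global intersection number depends continuously on the metrics under $d_\infty$-convergence at each place; since only finitely many places are actually being approximated, the exchange of limit and place-sum is immediate. Hence it suffices to prove $\overline{\sL}_0 \cdots \overline{\sL}_n \geq 0$ when each $\sL_i$ is a nef line bundle on the projective $\OO_\K$-model $\sX$.

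For such model adelic line bundles the global intersection number $\overline{\sL}_0 \cdots \overline{\sL}_n$ coincides, via the standard compatibility between Arakelov intersection theory and the Chambert-Loir formalism at archimedean and non-archimedean places, with the geometric top-intersection number $\sL_0 \cdots \sL_n$ of $n+1$ nef line bundles on the $(n+1)$-dimensional projective scheme $\sX \to \spec \OO_\K$, which is nonnegative by Kleiman's theorem. The main obstacle is ensuring that the approximating models can be chosen on a single $\sX$ while preserving nefness after pullback at every place simultaneously; this is handled by pulling back along a common dominating blow-up and using that the pullback of a nef line bundle under a proper morphism remains nef, together with the fact that at archimedean places one may approximate smooth psh metrics by those coming from nef algebraic models (for instance via Fubini–Study type metrics attached to very ample approximations of tensor powers of $L_i$).
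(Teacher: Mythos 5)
The core reduction at the end is not valid. You claim that for a nef model adelic line bundle $\overline{\sL}_i$ on a projective $\OO_\K$-model $\sX$, the global intersection number $\overline{\sL}_0\cdots\overline{\sL}_n$ "coincides \dots\ with the geometric top-intersection number $\sL_0\cdots\sL_n$ of $n+1$ nef line bundles on the $(n+1)$-dimensional projective scheme $\sX$," and then you invoke Kleiman. This is false. The arithmetic intersection number of hermitian line bundles on an arithmetic variety $\sX\to\spec\OO_\K$ has an archimedean contribution, namely the sum over infinite places of integrals of Green functions against Chambert-Loir/Monge–Ampère measures, which is not captured by any geometric intersection of the $\sL_i$ on $\sX$. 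In fact there is no geometric top-intersection-theoretic identity of the form you assert: the scheme $\sX$ has absolute dimension $n+1$, but the geometric product $\sL_0\cdots\sL_n$ of its $n+1$ line bundles is independent of the metrics and cannot equal the metric-dependent global height. Kleiman's theorem is a statement about projective varieties over a field and does not apply.

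A second, related, confusion: you speak of "approximat[ing] smooth psh metrics [at archimedean places] by those coming from nef algebraic models." There is no notion of a model over an archimedean place in this theory, and by the paper's own convention (see the end of \S 4.2) every archimedean metric is already declared "model." The archimedean data are genuinely analytic, and the semipositivity there (psh-ness of the Green function) must feed into the proof through the positivity of the associated currents or measures, not through an algebraic-model argument.

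What actually makes the proposition go is the positivity of the individual local contributions once suitable effective sections are chosen, combined with the approximation by nef models at finite places and an induction on dimension: the local term $(\overline L_0\cdots\overline L_{n-1})\big|_{\div(s_n)}$ is a sum of lower-dimensional global intersection numbers of restrictions, and the remaining term $\sum_v\int g_{n,v}\,c_1(\overline L_{0,v})\cdots c_1(\overline L_{n-1,v})$ is nonnegative because the Chambert-Loir measures of semipositive adelic line bundles are positive (Proposition~\ref{prop:1}) and one arranges $g_{n,v}\ge 0$. The existence of such effective sections for a semiample (hence semipositive) $L_n$ is where arithmetic positivity/ampleness enters; it is not a consequence of Kleiman's theorem. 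The paper takes this positivity as known from the literature, as highlighted in Remark~\ref{rmq:global-intersection-number}; your proposed route does not recover it.
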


We can define the height function $h_{\overline L}$ associated to an integrable adelic line $\overline L$. For
every closed $\overline \K$-subvariety $Z$ of $X$, one has
\begin{equation}
  \label{eq:48}
  h_{\overline L} (Z) := \frac{(\overline L_{|Z})^{\dim Z + 1}}{L_{|Z}^{\dim Z}}.
\end{equation}

In particular, if $\overline D$ is an adelic divisor and $p \in X_\K \setminus \Supp D$ is a closed point, then
\begin{equation}
  h_{\overline D} (p) = \sum_{v \in \cM (\K)} \frac{1}{\deg (p)}\sum_{q \in \Gal (\overline \K / \OO_\K) \cdot p} n_v
  g_{\overline D, v} (q)
  \label{eq:<+label+>}
\end{equation}
where $n_v = [\K_v : \Q_v]$.

\subsection{Over quasiprojective varieties}
The main reference for this section is \cite{yuanAdelicLineBundles2023}.
Let $U_\K$ be a normal quasiprojective variety over a number field $\K$. The construction is similar as in the local
setings.

Let $\sU_{\OO_\K}$ be a quasiprojective model of $U_\K$ over $\OO_\K$. A \emph{model adelic divisor} over $\sU_{\OO_\K}$
is a model adelic adelic divisor $\overline \sD$ over a projective model of
$\sU_{\OO_\K}$ over $\OO_\K$.
We write
$\hat \Div (\sX_{\OO_\K}, \sU_{\OO_\K})$ for the set of model adelic divisor
defined over a fixed projective model $\sX_{\OO_\K}$ of $\sU_{\OO_\K}$. Since the system of projective models of
$\sU_{\OO_\K}$ is a projective system, we can define the direct limit
\begin{equation}
  \hat \Div (\sU_{\OO_\K} / \OO_\K)_\mod := \varinjlim_{\sX_{\OO_\K}} \hat \Div(\sX_{\OO_\K}, \sU_{\OO_\K}).
  \label{<+label+>}
\end{equation}
Then, an adelic divisor over $U_\K$ is a Cauchy sequence of model adelic divisor with respect to a boundary divisor. It
comes with a \emph{Green function} which is the limit of the Green function of the divisor of the Cauchy sequence. We
write $\hat \Div(U_\K, \OO_\K)$ for the space of adelic divisors over $U_\K$. We also define a subset of adelic divisors
which is more suitable for our needs. We define $\hat \DivInf(U_\K/ \OO_\K)$ for the set of adelic divisors over
$U_\K$ \emph{suported at infinity}. That is if $\overline D = \lim \overline \sD_i$, then ${D_i}_{|U_\K} = 0$.

This is not restrictive as if $\overline D \in \hat \Div (U_\K)$, then
\begin{equation}
  \overline D \in \hat \DivInf ((U_\K \setminus \Supp D_{|U_\K}) / \OO_\K).
  \label{eq:not-restrictive}
\end{equation}

\begin{dfn}
An adelic divisor $\overline D$ over $U_\K$ is
\begin{itemize}
  \item \emph{vertical} if it is the limit of vertical model adelic divisors.
  \item \emph{strongly nef} if for the Cauchy sequence $(\overline \sD_i)$ defining
it we can take for every $\overline \sD_i$ a semipositive model adelic divisor.
\item \emph{nef} if there exists a strongly nef adelic divisor $\overline A$ such that for all $m \geq 1, \overline D +
  m \overline A$ is strongly nef.
\item \emph{integrable} if it is the difference of two strongly nef adelic divisors.
\end{itemize}
\end{dfn}

We define adelic line bundles over $U_\K$ similarly and write $\hat \Pic (U_\K, \OO_\K)$ for the set of adelic line
bundles over $U_\K$.
If $\overline L_0, \dots, \overline L_n$ are integrable, the global intersection number $\overline L_0, \cdots,
\overline L_n$ is defined as
\begin{equation}
  \label{eq:49}
  \overline L_0 \cdots \overline L_n := \lim_i \overline \sL_{0,i} \cdots \overline \sL_{n,i}.
\end{equation}
and the height function $h_{\overline L}$ is also well defined (see \cite{yuanAdelicLineBundles2023} \S 4.1 and \S 5.3).

We say that a nef adelic line bundle $\overline L$ is \emph{big} if $\overline L^{d+1} > 0$ (this implies that the
geometric intersection number $L^d$ is also $> r0)$).

\begin{rmq}\label{rmq:global-intersection-number}
  The definition of the global intersection number relies heavily on the fact that if $\overline \sL_0, \dots, \overline
  \sL_n$ are model semipositive adelic line bundles over a projective variety over $\K$, then $\overline \sL_0 \cdots
  \overline \sL_n \geq 0$.
\end{rmq}

As for the adelic divisors we define $\hat \Pic_\infty (\sU_{\OO_\K} / \OO_\K)_\mod$ for the set of model adelic line bundles
$\overline \sL$ such that $\sL_{|\sU_{\OO_\K}}$ is isomorphic to the trivial line bundle, we write $\hat \Pic_\infty
(\sU_{\OO_\K} / \OO_\K)$ for the completion of $\hat \Pic_\infty (\sU_{\OO_\K} / \OO_\K)_\mod$ with respect to the boundary topology and
we define
\begin{equation}
  \hat \Pic_\infty (U_\K / \OO_\K) := \varinjlim_{\sU_{\OO_\K}} \hat \Pic_\infty (\sU_{\OO_\K} / \OO_\K).
  \label{eq:<+label+>}
\end{equation}

\begin{prop}[\cite{yuanAdelicLineBundles2023} \S 2.5.5]\label{prop:functoriality}
  If $f : X_\K \rightarrow Y_\K$ is a morphism between quasiprojective varieties over $\K$, then there is a pullback operator
  \begin{equation}
    f^* : H(Y_\K) \rightarrow H(X_\K)
    \label{<+label+>}
  \end{equation}
  where $H = \hat \Div(\cdot), \hat \DivInf (\cdot / \OO_\K), \hat \Pic(\cdot / \OO_\K), \hat \Pic_\infty (\cdot /
  \OO_\K)$
  that preserves model, strongly nef, nef and integrable adelic divisors. If $g$ is the Green function of $\overline D
  \in \hat \DivInf(Y_\K / \OO_\K )$, then the Green function of $f^* \overline D$ is $g \circ f^{\an}$.
\end{prop}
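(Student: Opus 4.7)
The plan is to first define $f^{*}$ at the level of model adelic divisors and line bundles by resolving the indeterminacy of $f$ on projective models, then extend by continuity for the boundary topology, and finally verify that the positivity hierarchy and the Green function identity are preserved under this extension.

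For the model-level construction, by Lemma \ref{lemme:system-quasiprojective-models} I choose compatible quasiprojective models $\sV_{\OO_\K}$ of $X_\K$ and $\sU_{\OO_\K}$ of $Y_\K$ such that $f$ extends to a morphism $\tilde f\colon \sV_{\OO_\K}\to\sU_{\OO_\K}$; shrinking $\sV_{\OO_\K}$ using cofinality I further arrange $\tilde f^{-1}(\sU_{\OO_\K})=\sV_{\OO_\K}$. Given a model adelic divisor $\overline{\sD}$ on a projective model $\sX_{\OO_\K}\supset\sU_{\OO_\K}$, the closure of the graph of $\tilde f$ produces a projective model $\sX'_{\OO_\K}\supset\sV_{\OO_\K}$ together with a morphism $\bar f\colon\sX'_{\OO_\K}\to\sX_{\OO_\K}$. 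I then define $f^{*}\overline{\sD}$ to have underlying divisor $\bar f^{*}\sD$, which is well defined since $\sD_{|\sU_{\OO_\K}}$ is Cartier and $\bar f(\sV_{\OO_\K})\subset\sU_{\OO_\K}$, and place-by-place Green functions $g_{v}\circ\bar f^{\an}$. Passing to a common refinement of two choices of $\sX'_{\OO_\K}$ shows independence from the resolution, and exactly the same construction defines the operator on $\hat\DivInf$, $\hat\Pic$, and $\hat\Pic_{\infty}$.

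To extend by continuity, fix a boundary divisor $\overline{\sD}_{0}$ for $\sU_{\OO_\K}$. Because $\tilde f^{-1}(\sU_{\OO_\K})=\sV_{\OO_\K}$, the divisor $f^{*}\overline{\sD}_{0}$ is supported on the boundary of the chosen projective model of $\sV_{\OO_\K}$ and its Green function $g_{0}\circ\tilde f^{\an}$ is strictly positive, so it is itself a boundary divisor. Pullback preserves effectivity of model adelic divisors: the Cartier side is immediate, and the Green function side follows from $g\ge 0\Rightarrow g\circ\bar f^{\an}\ge 0$ together with Lemma \ref{lemme:effectiveness-local} in the non-archimedean Noetherian case. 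Thus a Cauchy sequence $(\overline{\sD}_{i})$ satisfying $-\varepsilon_{i}\overline{\sD}_{0}\le\overline{\sD}_{j}-\overline{\sD}_{i}\le\varepsilon_{i}\overline{\sD}_{0}$ pulls back to a Cauchy sequence against $f^{*}\overline{\sD}_{0}$ with the \emph{same} sequence $\varepsilon_{i}$, yielding a limit $f^{*}\overline{D}\in\hat\Div(\sV_{\OO_\K}/\OO_\K)$; the direct limit over $\sV_{\OO_\K}$ then produces $f^{*}\colon\hat\Div(Y_\K/\OO_\K)\to\hat\Div(X_\K/\OO_\K)$, and analogously for the other three functors.

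The preservation of the model condition is tautological. For strong nefness it suffices to show that pullback sends semipositive model adelic divisors to semipositive ones: at archimedean places plurisubharmonicity is stable under pullback by a holomorphic map, while at non-archimedean places Theorem \ref{thm:1} identifies semipositivity with nefness of an approximating sequence of models, and pullback of nef line bundles between projective models is nef. Nefness and integrability then follow from their definitions in terms of strong nefness. For the Green function formula, the representatives $g_{i}$ converge uniformly locally to $g$ on $(U_\K\setminus\Supp D)^{\an}$, so by continuity of $f^{\an}$ the pullbacks $g_{i}\circ f^{\an}$ converge uniformly locally to $g\circ f^{\an}$ on the complement of $f^{-1}(\Supp D)^{\an}$, and this limit is by construction the Green function of $f^{*}\overline{D}$. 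The main obstacle throughout is ensuring the Cauchy estimates survive pullback with the same $\varepsilon_{i}$; this is exactly what the effectivity-preservation argument in the second step accomplishes.
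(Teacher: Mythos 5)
The paper itself offers no proof of this proposition: it is taken directly from Yuan--Zhang \cite{yuanAdelicLineBundles2023} \S 2.5.5. Your reconstruction follows what is essentially the standard route (pull back on models via graph closure, then extend by continuity in the boundary topology), and the Green-function and positivity parts are handled correctly. However, there is a genuine gap in the continuity step. You claim that because $\tilde f^{-1}(\sU_{\OO_\K})=\sV_{\OO_\K}$ at the level of quasiprojective models, the pullback $f^{*}\overline{\sD}_{0}$ is \emph{itself} a boundary divisor for $\sV_{\OO_\K}$. That is false. After forming the graph closure $\bar f\colon\sX'_{\OO_\K}\to\sX_{\OO_\K}$ one only knows $\Supp\bar f^{*}\sD_{0}=\bar f^{-1}(\sX_{\OO_\K}\setminus\sU_{\OO_\K})\subset\sX'_{\OO_\K}\setminus\sV_{\OO_\K}$, and in general $\bar f^{-1}(\sU_{\OO_\K})$ is strictly larger than $\sV_{\OO_\K}$, so the containment is strict. (A minimal example: $f(x)=1/x$ from $\G_{m}$ to $\A^{1}$; the graph closure in $\P^{1}\times\P^{1}$ has two boundary points, but $\bar f^{-1}(\infty)$ is only one of them.) The definition of boundary divisor requires support \emph{equal} to $\sX'_{\OO_\K}\setminus\sV_{\OO_\K}$, so $f^{*}\overline{\sD}_{0}$ need not qualify, and the Cauchy estimate you transport is a priori only an estimate with respect to an effective model adelic divisor supported \emph{inside} the boundary, not with respect to a norm defining the boundary topology on $\sV_{\OO_\K}$.

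The repair is short but must be said: either replace $\sV_{\OO_\K}$ by $\sV'_{\OO_\K}:=\bar f^{-1}(\sU_{\OO_\K})$, which is still a quasiprojective model of $X_\K$ and for which $\Supp\bar f^{*}\sD_{0}=\sX'_{\OO_\K}\setminus\sV'_{\OO_\K}$ by construction (this changes nothing in the direct limit defining $\hat\Div(X_\K/\OO_\K)$), or observe that any effective model adelic divisor on $\sX'_{\OO_\K}$ whose support lies in the boundary of $\sV_{\OO_\K}$ is dominated by a constant multiple of a genuine boundary divisor $\overline{\sE}_{0}$ of $\sV_{\OO_\K}$, so that the transported Cauchy bounds become boundary Cauchy bounds with constants $C\varepsilon_{i}$ rather than $\varepsilon_{i}$. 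Without one of these two observations the passage from model divisors to the completion does not go through as written.
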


\subsection{Chambert-Loir measures}\label{subsec:chambert-loir-measures}

If $\overline L_1, \cdots, \overline L_n$ are integrable adelic line bundles over $U_\K$, then for every
$v \in \cM(\K)$ we have the measure $c_1(\overline L_1) \cdots c_1(\overline L_n)_v$ defined as follows. Let
$\Omega \Subset U_\K^{\an, v}$  be a relatively compact open subset and for every $i, s_i$ is an invertible local section of
$L_{i|\Omega}$. Then, for every $j \geq 1$,
\begin{equation}
  \label{eq:65}
  \left(c_1(\overline L_{1,j}) \cdots c_1(\overline L_{n,j})_v\right)_{|\Omega} = dd^c (-\log ||s_1||_j) \wedge \cdots \wedge dd^c (- \log
  ||s_n||_j).
\end{equation}
Since the functions $\log ||s_i||_j$ converges uniformly over $\Omega$, we define
$\left(c_1(\overline L_1) \cdots c_1(\overline L_n)_v\right)_{|\Omega}$ as the weak limit of the measures
$\left(c_1(\overline L_{1,j}) \cdots c_1(\overline L_{n,j})_v\right)_{|\Omega}$. By \cite[Lemma 5.4.4]{yuanAdelicLineBundles2023}, we have that the total mass of the signed measure is
$c_1(\overline L_1) \cdots c_1 (\overline L_n)_v = c_1(L_1) \cdots c_1(L_n)$ for every place $v$.

\section{Over a finitely generated field}
\label{sec:over-fin-gen} Let $F$ be a finitely generated field over $\Q$. Set $d = \tr.deg(F/\Q)$.
Let $U_F$ be a quasiprojective variety over $F$. This setting will be called the \emph{arithmetic setting}. Let $\K$ be a
number field contained in $F$. An adelic divisor/line
bundle over $U_F$ is an adelic divisor/line bundle over any quasiprojective model $X_\K$ of $U_F$ over $\K$, more
precisely we define
\begin{equation}
  \label{eq:50}
  \hat \Div(U_F / \OO_\K) := \varinjlim_{X_\K} \hat \Div(X_\K / \OO_\K), \quad \hat \Pic (U_F / \OO_\K) := \varinjlim_{X_\K}
  \hat \Pic(X_\K / \OO_\K).
\end{equation}
 and
\begin{equation}
  \label{eq:50}
  \hat \DivInf(U_F / \OO_\K) := \varinjlim_{X_\K} \hat \DivInf(X_\K / \OO_\K), \quad \hat \Pic (U_F / \OO_\K) :=
  \varinjlim_{X_\K} \hat \Pic_\infty(X_\K / \OO_\K).
\end{equation}
The notions of semipositive, nef, integrable and vertical adelic divisors/ line
bundles follow from the direct limit. Of course if $d = 0$ and thus $F = \K$ we
recover the same definition as in the previous section.
We have the continuous embedding $U_F^{\an} \hookrightarrow X_\K^{\an}$, notice that the dimension of $X_\K$
over $\K$ is equal to $n + d$. If $\overline D$ is an adelic divisor over $U_F$ and $X_\K$ is a
quasiprojective model of $U_F$ where $\overline D$ is defined, then $\overline D$ defines a continuous
Green function
\begin{equation}
  \label{eq:51}
  g: (X_\K \setminus \Supp D_{|X_\K})^{\an} \rightarrow \R
\end{equation}
that restricts to a continuous function
\begin{equation}
  \label{eq:52}
  g: (U_F \setminus \Supp D_{|U_F})^{\an} \rightarrow \R.
\end{equation}
 If $\overline L_1, \cdots, \overline L_n$ are integrable adelic line bundles over $U_F$, then for every place
 $w \in (\spec F)^{\an}$ we have the measure
 \begin{equation}
   \label{eq:53}
   c_1(\overline L_1)_w \cdots c_1(\overline L_n)_w
 \end{equation}
 over $U_F^{\an, w}$.

 \subsection{Vector heights}\label{subsec:vector-heights}
 If $U_F$ is a quasiprojective variety over $F$ of dimension $n$ one expects to be able to define the
intersection number $\overline L_0 \cdots \overline L_n$ for integrable line bundles. This has been done by
Yuan and Zhang in \cite{yuanAdelicLineBundles2023} \S 4 using the Deligne pairing. We do not obtain a number
but an adelic line bundle over $\spec F$. Namely,
\begin{prop}\label{prop:deligne-pairing}
  There is a multilinear map
  \begin{equation}
    \overline L_0 \dots \overline L_n \in \hat \Pic(U_F / \OO_\K)^{n+1} \mapsto \langle \overline L_1 , \dots, \overline L_n
    \rangle \in \hat \Pic (F / \OO_\K)
    \label{eq:<+label+>}
  \end{equation}
  such that if every $\overline L_i$ is strongly nef (resp. nef, resp. integrable), then $\langle \overline L_1, \dots,
  \overline L_n \rangle$ also is strongly nef (resp. nef, resp. integrable). Furthermore, if $\overline H_1, \dots,
  \overline H_d \in \hat \Pic (F / \OO_\K)$ are integrable, then
  \begin{equation}
    \langle \overline L_0 , \dots , \overline L_n \rangle \cdot \overline H_1 \dots \overline H_d = \overline L_0 \cdots
    \overline L_n \cdot \pi^* \overline H_1 \dots \pi^* \overline H_d.
    \label{eq:<+label+>}
  \end{equation}
\end{prop}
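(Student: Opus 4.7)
The plan is to construct the pairing first at the level of model adelic line bundles (where Deligne's classical construction plus an inductive metric definition applies) and then extend to the quasiprojective adelic setting via the boundary topology. By Lemma \ref{lemme:structure-quasiprojective-model} and \ref{lemme:system-quasiprojective-models}, after shrinking we can find a flat projective morphism $q\colon X_\K \to Y_\K$ of quasiprojective varieties over $\K$ whose generic fiber is $U_F \to \spec F$, and replacing $X_\K$ by a projective model $\sX_{\OO_\K}$ over a projective model $\sY_{\OO_\K}$ of $Y_\K$, we are in a relative projective situation of relative dimension $n$. For line bundles $\sL_0,\dots,\sL_n$ on $\sX_{\OO_\K}$, the classical Deligne pairing $\langle \sL_0,\dots,\sL_n\rangle_q$ is a well defined line bundle on $\sY_{\OO_\K}$, multilinear and symmetric, with pullback to $\spec F$ giving an element of $\Pic(\spec F)$.

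To upgrade this to adelic structure, for each place $v \in \cM(\K)$ and each point $y \in Y_\K^{\an,v}$, one endows the fiber $\langle L_0,\dots,L_n\rangle_y$ with a metric by the Chambert-Loir--Deligne inductive formula: choose local nonvanishing sections $s_0,\dots,s_n$ whose divisors meet properly on the fiber $X_y$, and set
\begin{equation}
  -\log\|\langle s_0,\dots,s_n\rangle\|_y = -\log\|\langle s_0,\dots,s_{n-1}\rangle_{|\div(s_n)}\|_y + \int_{X_y^{\an,v}} (-\log\|s_n\|_v)\, c_1(\overline L_0)\cdots c_1(\overline L_{n-1}).
\end{equation}
Induction on $n$ and symmetrization (using the Poincaré--Lelong formula on fibers) show the definition is independent of the order and of the choices of sections, and the resulting metric on $\langle L_0,\dots,L_n\rangle$ on $Y_\K^{\an,v}$ is continuous in $y$. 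Semipositivity is preserved: if all $\overline L_i$ are semipositive, then fiberwise $c_1(\overline L_0)\cdots c_1(\overline L_{n-1})$ is a positive measure, so the integrand above defines a plurisubharmonic function of $y$, and a global semipositive model extends over an open subset of $\spec \OO_\K$ for almost every finite place.

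Now I would extend to the adelic (non-model) case. Given Cauchy sequences $(\overline \sL_{i,k})_k$ defining each $\overline L_i$ with boundary divisor $\overline \sD_0$ and rates $\epsilon_k \to 0$, form the sequence $\langle \overline \sL_{0,k},\dots,\overline \sL_{n,k}\rangle$ in $\hat\Pic(F/\OO_\K)_{\mod}$. Multilinearity lets us express the difference $\langle \overline \sL_{0,k'},\dots\rangle - \langle \overline \sL_{0,k},\dots\rangle$ as a telescoping sum of pairings in which exactly one slot is $\overline \sL_{i,k'}-\overline \sL_{i,k}$, bounded in the boundary norm by $\epsilon_k \overline \sD_0$. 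Using the inductive integral formula and Theorem \ref{thm:total-mass-measure-quasiprojective-setting} to control the fiber masses, one shows that this difference is bounded in the boundary topology on $\hat\Pic(F/\OO_\K)_\mod$ by $C\epsilon_k \langle \overline \sD_0, \text{nef terms}\rangle$, hence Cauchy. The limit defines $\langle \overline L_0,\dots,\overline L_n\rangle$; the preservation of strong nefness is automatic (pairings of semipositive models yield semipositive Deligne pairings), and nef/integrable cases follow by adding/subtracting strongly nef auxiliaries.

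Finally, the projection formula is proved first for model adelic line bundles on a projective compactification: the identity
\begin{equation}
\overline \sL_0 \cdots \overline \sL_n \cdot \pi^*\overline \sH_1 \cdots \pi^*\overline \sH_d = \langle \overline \sL_0,\dots,\overline \sL_n\rangle \cdot \overline \sH_1 \cdots \overline \sH_d
\end{equation}
is the defining property of the Deligne pairing for flat projective morphisms of relative dimension $n$, and in the arithmetic setting it is compatible with the inductive metric definition above (this is essentially the integration-by-parts formula at each place together with Fubini for Chambert-Loir measures on the total space). Taking the limit in the Cauchy sequences and invoking the continuity of the global intersection number in the boundary topology yields the formula for adelic line bundles. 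The main obstacle in this plan is controlling the Cauchy property of the Deligne pairings in step three: one must be careful that the boundary divisor on $\spec F$ used to measure convergence of $\langle \overline L_0,\dots,\overline L_n\rangle$ can be built from pairings of $\overline \sD_0$ against strongly nef adelic line bundles on $U_F$, and that all fiberwise integrals remain uniformly bounded as $k \to \infty$; this is exactly where Theorem \ref{thm:total-mass-measure-quasiprojective-setting} is essential.
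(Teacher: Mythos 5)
The paper does not actually prove this proposition: it is quoted from Yuan–Zhang, \cite{yuanAdelicLineBundles2023} \S 4, as the sentence preceding the statement makes explicit. So there is no in-paper proof to compare against. What you have written is a plausible high-level reconstruction of the Yuan–Zhang construction, and the four stages you identify (classical Deligne pairing on a relative projective model, Chambert-Loir-style inductive metrization, passage to the limit under the boundary topology, projection formula for models plus continuity) are indeed the correct skeleton.

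That said, the one step you flag yourself as ``the main obstacle'' really is a gap rather than a detail, and Theorem~\ref{thm:total-mass-measure-quasiprojective-setting} alone does not close it. To extract a Cauchy sequence in $\hat\Pic(F/\OO_\K)$ you must exhibit a fixed boundary divisor $\overline \sE_0$ on a quasiprojective model of $\spec F$ such that, for all large $k$,
\begin{equation}
  -\epsilon_k \overline \sE_0 \;\le\; \langle \overline \sL_{0,k'},\dots\rangle - \langle \overline \sL_{0,k},\dots\rangle \;\le\; \epsilon_k \overline \sE_0.
\end{equation}
The natural candidate $\langle \overline \sD_0,\text{nef terms}\rangle$ depends on $k$ through the ``nef terms'', and its underlying divisor need not be supported on the boundary of a model of $\spec F$; moreover its Green functions need not dominate the difference at bad finite places. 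In Yuan–Zhang this is handled by a separate, nontrivial argument showing the Deligne pairing sends boundary divisors (paired against a \emph{fixed} nef system) into a single boundary divisor of $\spec F$, and controlling all places uniformly; the mass formula is a consequence, not the input. Two smaller imprecisions: the assertion that the fiberwise integral ``defines a plurisubharmonic function of $y$'' at archimedean places is the content of a genuine theorem (variation of c.l.~measures in families) rather than an observation, and the projection formula for metrized Deligne pairings requires a Fubini-type identity for Chambert-Loir measures on the total space that is itself a proved statement in the projective setting and needs a separate limiting argument quasiprojectively. So your sketch matches the approach the paper relies on, but it is a sketch: the Cauchy estimate as written would not go through without the additional boundedness lemmas from \cite{yuanAdelicLineBundles2023} \S 4.
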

This allows to define the vector height associated to $\overline L$. For every $p \in U_F (\overline F)$, we define
\begin{equation}
  \mathfrak h_{\overline L} (p) := \frac{1}{\deg (p)}\langle \overline L_{|p} \rangle \in \hat \Pic (F / \OO_\K)
  \label{eq:<+label+>}
\end{equation}
via the Deligne pairing induced by $ p \rightarrow \spec F$.

\subsection{Moriwaki heights}
\label{subsec:moriwaki-heights}
\begin{dfn}
  \label{dfn:1}
  An arithmetic polarisation of $F$ is the data of model semipositive adelic line bundles over $\spec F$ with ample
  underlying line bundles. We write $\overline H$ for the data of $\overline H_1, \dots, \overline H_d$.

  A model semipositive adelic line bundle $\overline H$ with ample underlying line bundle is said to satisfy the
  \emph{Moriwaki condition} if $\overline H^{d+1} = 0$ and $H^d > 0$.
\end{dfn}
Notice that contrary to \cite{yuanAdelicLineBundles2023}, we only take arithmetic polarisation which come from
\emph{model} adelic line bundles. This will be enough for our needs. In particular, for every non-archimedean place
$v$ of $\K$, the measure $c_1 (\overline H_1) \cdots c_1 (\overline H_{d-1})_v$ is a Dirac measure over $B_\K^{\an,v}$.

Let $B_\K$ be a projective of $\spec F$ over $\K$ such that every $\overline H_i$ is defined over
$B_\K$. By Lemma \ref{lemme:system-quasiprojective-models}, we can assume that
\begin{equation}
  \label{eq:55}
  \hat \Div (U_F / \OO_\K) = \varinjlim_{X_\K} \hat \Div(X_\K / \OO_\K), \quad \hat \Pic(U_F / \OO_\K) = \varinjlim_{X_\K}
  \hat \Pic(X_\K / \OO_\K)
\end{equation}
where for all the quasiprojective models $X_\K$ of $U_F$ we have a morphism $\pi: X_\K \rightarrow B_\K$. Indeed, let
$X_\K \rightarrow Y_\K$ be a quasiprojective model of $U_F \rightarrow \spec F$. Then, the birational map
$Y_\K \dashrightarrow B_\K$ induces a morphism $Y_\K' \hookrightarrow B_\K$ where $Y_\K '$ is an open subset of
$Y_\K$ and we can replace $X_\K$ by the preimage of $Y_\K'$ by Lemma \ref{lemme:system-quasiprojective-models}.

For integrable line bundles $\overline L_0, \cdots, \overline L_n$ we define the intersection number
\begin{equation}
  \label{eq:56}
  \left( \overline L_0 \cdots \overline L_n \right)_{\overline H} := \overline L_0 \cdots
  \overline L_n \cdot \pi^* \overline H_1 \cdots \pi^* \overline H_d.
\end{equation}

The Moriwaki height of a closed $\overline F$-subvariety with respect to an integrable adelic line bundle
$\overline L$ and to the polarisation $\overline H = (\overline H_1, \cdots, \overline H_d)$ is
\begin{equation}
  \label{eq:57}
  h_{\overline L}^{\overline H} = \frac{\left(\overline L_{|Z}^{\dim Z + 1}\right)_{\overline H}}{L_{|Z}^{\dim Z}}.
\end{equation}

\begin{thm}[Theorem 5.3.1 of \cite{yuanAdelicLineBundles2023}, Northcott property]\label{thm:northcott-property}
  Let $F$ be a finitely generated field over $\Q$. Let $X_F$ be a \emph{projective} variety over
  $F$ and $\overline L$ an integrable adelic line bundle with ample underlying line bundle. For any arithmetic
  polarisation $\overline H_1, \dots, \overline H_d$ of $F$, with each $\overline H_i$ big and for any $A, M > 0$ the set
  \begin{equation}
    \left\{ x \in X_F (\overline F): \deg (x) \leq A, h_{\overline L}^{\overline H} (x) \leq M  \right\}
    \label{eq:<+label+>}
  \end{equation}
  is finite.
\end{thm}

The Moriwaki height of a model adelic divisor $(\overline D,g)$ is of the form
\begin{equation}
  \forall p \not \in \Supp (D) (\overline F), \quad h_{\overline D}^{\overline H} (p) = \sum_{q \in \Gal(\overline F /
  F)(p)}\int_{B(\C)} g(q(b)) c_1 (\overline H_1) \cdots c_1 (\overline H_d) (b) + \sum_{q \in \Gal (\overline F / F)}
  \sum_{\Gamma \subset \sB_{\OO_\K}} g_\Gamma (q) \left( \overline H_1 \cdots \overline H_d\right)_{|\Gamma}.
  \label{eq:<+label+>}
\end{equation}
Comparing with the number field case, we have a sum over non-archimedean places which is similar as in the number field
case, the main difference is for the Archimedean places. There are infinitely many of them so we integrate over all of
them with respect to a measure given by the polarisation.

\begin{prop}\label{prop:moriwaki-height-as-integral}
  Let $U_F$ be a quasiprojective variety over $F$ and $\overline D \in \hat \DivInf(U_F / \OO_\K)$, then we have the formula
  \begin{equation}
    \forall p \in U_F(\overline F), \quad h_{\overline D}^{\overline H} (p) = \sum_{q \in \Gal(\overline F /
  F)(p)}\int_{B(\C)} g(q(b)) c_1 (\overline H_1) \cdots c_1 (\overline H_d) (b) + \sum_{q \in \Gal (\overline F / F)}
  \sum_{\Gamma \subset \sB_{\OO_\K}} g_\Gamma (q) \left( \overline H_1 \cdots \overline H_d\right)_{|\Gamma}.
    \label{eq:<+label+>}
  \end{equation}
\end{prop}
\begin{proof}
  Let $\overline D_i$ be a Cauchy sequence converging to $\overline D$. The sum $h_{\overline D_i}(p)$ is $\leq
  h_{\overline \sD_0} (p)$ for some boundary divisor $\overline \sD_0$. Thus, for the non-archimedean places this is
  just the fact that the convergence $g_{\overline D_i} \rightarrow g_{\overline D}$ is locally uniform. Now for the
  archimedean place. We have by \cite{chenArithmeticIntersectionTheory2021}, that the function
  \begin{equation}
    b \in B(\C \setminus \overline \K) \mapsto g_{\overline D_i} (p(b))
    \label{eq:<+label+>}
  \end{equation}
  is integrable with respect to $c_1 (\overline H_1) \dots c_1 (\overline H_d)$ and they are all bounded by the
  measurable function $b \mapsto g_{\overline \sD_0 (p(b))}$. Since $g_{\overline D_i} (p(b)) \rightarrow g_{\overline
  D}(p(b))$, by The Lebesgue dominated convergence theorem, we have that
  \begin{equation}
    \lim_i \int_{B (\C)} g_{\overline D_i} (p (b)) = \int_{B(\C)} g_{\overline D} (p(b)).
    \label{eq:<+label+>}
  \end{equation}
  and that shows the result.
\end{proof}

\subsection{Arithmetic equidistribution}\label{subsec:arithm-equidistribution}
We have the following result from \cite{yuanAdelicLineBundles2023} that we restate in a version more suitable for our
needs. 
\begin{thm}[{\cite[Theorem 5.4.5]{yuanAdelicLineBundles2023}}]\label{thm:arithm-equid}
  Let $U_F$ be a quasiprojective variety over a field $F$ and $\overline L$ be a nef adelic line bundle such that
  $\deg_L U_F = L^{\dim U_F} > 0$. Let $X_\K$ be a quasiprojective model of $U_F$ over $\K$ such that $\overline L \in \hat
  \Pic(X_\K / \OO_\K)$ and let
  $\overline H$ be an arithmetic polarisation of $F$ satisfying the Moriwaki condition defined over $\sB$. If $(x_k)_{k
  \geq 0}$ is a generic sequence of $X(\overline F)$ such that $h^{\overline H}_{\overline L} (x_k) \rightarrow h^{\overline
H}_{\overline L} (U_F)$, then
\begin{enumerate}
  \item for every $\Gamma \subset \sB$, and for every compactly supported continuous function $\phi : U_F^{\an, \Gamma}
    \rightarrow \R$  we have
    \begin{equation}
      \lim_m \frac{1}{\deg(x_m)} \sum_{y \in \Gal (\overline F / F) \cdot x_m} \phi(y) = \int_{U_F^{\an, \Gamma}} \phi
      c_1 (\overline L)^d_\Gamma.
      \label{eq:<+label+>}
    \end{equation}
  \item If $v$ is an archimedean place of $\K$ (i.e an embedding $\K \hookrightarrow \C$),
    then or every continous function with compact support $X_\K (\C) \rightarrow \R$ we have
    \begin{equation}
      \lim_m \frac{1}{\deg(x_m)} \sum_{y \in \Gal(\overline F / F) \cdot x_m } \int_{\sB(\C)} \phi(y(b)) c_1 (\overline
      H)^d = \int_{\sB(\C)} \left( \int_{X^{\an,b}} \phi_b c_1 (\overline L)_b^d \right) d \mu_{\overline H} (b).
      \label{eq:<+label+>}
    \end{equation}
  \end{enumerate}
\end{thm}
Theorem F of \cite{chenHilbertSamuelFormulaEquidistribution} suggests that equidistribution should hold even without the
Morkiwaki condition.

\subsection{Moriwaki condition}\label{subsec:moriwaki-condition}
Some results about volume estimate and equidistribution
require the Morkiwaki condition but it not compatible with the requirements for the Northcott property. To get rid of
the Moriwaki condition we will need the following result
\begin{prop}[Lemma 3.2 of \cite{yuanArithmeticHodgeIndex2021}]\label{prop:moriwaki-condition}
  Let $\overline L \in \hat \Pic(F / \OO_\K)$ is a nef adelic line bundle be such that for every $\overline H \in \hat
  \Pic(F / \OO_\K)_\mod$
  satisfying the Moriwaki condition we have
  \begin{equation}
    \overline L \cdot \overline H^d = 0
    \label{eq:<+label+>}
  \end{equation}
  then $\overline L$ is numerically trivial in $\hat \Pic(F / \OO_\K)$.
\end{prop}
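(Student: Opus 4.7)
The plan is to bootstrap the Moriwaki-vanishing hypothesis into a full tower of vanishings $\overline L^{k} \cdot \overline H^{d+1-k} = 0$ for every $k$, by a variational argument inside the Moriwaki cone, and then to invoke the arithmetic Hodge index theorem of \cite{yuanArithmeticHodgeIndex2017} to conclude. Let $B_\K$ be a projective model of $F$ over $\K$ on which all relevant bundles are defined. The key auxiliary object is the trivial line bundle $\overline e$ on $B_\K$ with Green function identically $1$ at each archimedean place and trivial metric at each finite place: its curvature vanishes, so $\overline e^{k} \cdot (\cdots) = 0$ for $k \geq 2$, while $\overline e \cdot \overline L_1 \cdots \overline L_d = [\K:\Q]\, L_1 \cdots L_d$ (the geometric intersection on $B_\K$).

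The variational step is as follows. Fix a Moriwaki $\overline H$ and, for small rational $t \geq 0$, set $\overline H(t) := \overline H + t \overline L + c(t) \overline e$ with $c(t) \in \Q$ chosen so that $\overline H(t)^{d+1} = 0$. Since $\overline L$ is nef and $\overline e$ has vanishing curvature, $\overline H(t)$ is model-semipositive with the same ample underlying line bundle, hence Moriwaki for every $t$ small enough. Using $\overline H^{d+1} = 0$ and $\overline L \cdot \overline H^d = 0$ (hypothesis) one checks directly that $c(0) = c'(0) = 0$.

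The hypothesis applied to this Moriwaki family then gives $\overline L \cdot \overline H(t)^d \equiv 0$ identically in $t$. Taylor-expanding this equation together with $\overline H(t)^{d+1} = 0$ and reading off the coefficients of $t^k$ for $k = 1, \ldots, d$ in tandem, one proves by induction that all derivatives $c^{(j)}(0)$ vanish and that
\[
\overline L \cdot \overline H^d = 0,\quad \overline L^2 \cdot \overline H^{d-1} = 0,\quad \ldots,\quad \overline L^{d+1} = 0.
\]
With $\overline L$ nef and this entire chain vanishing, the equality case of Yuan--Zhang's arithmetic Hodge index theorem forces $\overline L \equiv \lambda(\overline H)\, \overline H$ numerically on $B_\K$ for some $\lambda(\overline H) \in \R$. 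Repeating the argument with a second Moriwaki $\overline H'$ whose underlying ample class is not proportional to $H$ (obtained by perturbing a different ample divisor on $B_\K$ and re-normalising via the $\overline e$ adjustment of Step 2) yields $\lambda(\overline H)\, \overline H \equiv \lambda(\overline H')\, \overline H'$ numerically, which forces both constants to vanish. Hence $\overline L$ is numerically trivial in $\hat\Pic(F/\OO_\K)$.

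The main technical obstacle is to carry out the variational argument strictly within the class of \emph{model} Moriwaki adelic line bundles so that the hypothesis remains applicable throughout: one must work with rational parameters $t$ and $c(t)$ and approximate $\overline L$ by model nef adelic line bundles in a manner compatible with the order-by-order extraction of the vanishings. A subsidiary difficulty is the equality case of the arithmetic Hodge index theorem, which requires $\overline H$ to be sufficiently arithmetically positive; this is handled by starting from a very ample $H$ on $B_\K$ before performing the $\overline e$-adjustment to force the Moriwaki condition.
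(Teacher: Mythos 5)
The paper itself offers no proof of this proposition: it is quoted directly as Lemma~3.2 of \cite{yuanArithmeticHodgeIndex2021}, so there is no in-text argument to compare against. Evaluating your argument on its own merits, the first half is on the right track: the trivial line bundle $\overline e$ with constant Green function is a standard device, the computation that $c(0)=c'(0)=0$ is correct, the extraction of the chain $\overline L^{j}\cdot\overline H^{\,d+1-j}=0$ from the Taylor coefficients (using $\overline e^{\,2}=0$ in intersection products) works, and even the difficulty you flag — that $\overline H+t\overline L+c(t)\overline e$ is not a \emph{model} Moriwaki polarisation because $\overline L$ is only a limit of models — is genuine but repairable by approximating $\overline L$ by model nef $\overline L_m$, forming $\overline H_m(t)$, and passing to the limit in $m$ using continuity of intersection numbers (provided one tracks the fact that $c_m'(0)\propto\overline L_m\cdot\overline H^{\,d}$ only tends to zero, it is not identically zero).

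The genuine gap is your final step. You assert that the chain of vanishings together with the equality case of the Yuan--Zhang arithmetic Hodge index theorem forces $\overline L\equiv\lambda(\overline H)\,\overline H$ numerically. But the equality case of that theorem (see also Theorem~\ref{thm:arithm-hodge-index-local} in this paper) applies to an integrable class $\overline M$ whose \emph{underlying geometric line bundle is numerically trivial}; one would need $\overline M=\overline L-\lambda\overline H$ with $L-\lambda H$ numerically trivial on $B_\K$. When $d=\tr.deg F/\Q\geq 2$, the bundles $L$ and $H$ are arbitrary line bundles on the $d$-dimensional variety $B_\K$ and, since $\NS(B_\K)$ has rank $>1$ in general, there is no $\lambda$ making $L-\lambda H$ numerically trivial. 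So the equality case is simply not applicable and the proportionality $\overline L\equiv\lambda\overline H$ does not follow. (In the case $d=1$, where $B_\K$ is a curve and $\NS(B_\K)\cong\Z$, your step does go through, which may be why it looks plausible.) What the chain of vanishings actually gives you is $\overline L^{\,d+1}=0$ and $\overline L^{\,d}\cdot\overline H=0$; transferring this to arbitrary nef model polarisations via the $\overline e$-normalisation yields identities like $\overline L^{j}\cdot\overline A^{\,d+1-j}=(d+1-j)c\,[\K:\Q]\,L^{j}\cdot A^{\,d-j}$, showing that the arithmetic vanishings are entangled with the geometric intersection numbers $L^j\cdot A^{\,d-j}$, which you have not shown to vanish. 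Closing this requires additional input — controlling the geometric class of $L$ — that is absent from your argument.
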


\section{Geometric setting}\label{sec:geometric-setting}
Let $\K$ be any field and let $F$ be a finitely generated field over $\K$ such that $\tr.deg F/\K = d \geq 1$. We equip $\K$
with the trivial absolute value. We will call this situation the \emph{geometric setting}. In contrast with the
arithmetic setting, we only deal with non-archimedean places in the
geometric case. We even restrict to a specific kind of places: A
\emph{geometric polarisation} of $F$ is the data of a projective model $B$ of $F$ over $\K$ which is regular in codimension 1 and
nef line bundles $H_1, \dots, H_{d-1}$ over $B$. We
define the set of \emph{$B$-places} $\cM_B (F)$ of $F$ as the set of points of codimension 1 in $B$. A codimension 1 point in
$B$ is a generic point $\eta_E$ for a prime divisor $E$ of $B$. The order of vanishing $\ord_E$ along $E$ defines a
seminorm over $F$:
\begin{equation}
  \left| f \right|_E := e^{- \ord_E (f)}.
  \label{eq:<+label+>}
\end{equation}
We write $\cM(F) = \bigcup_{B} \cM_B (F)$ for the set of places of $F$. If $E$ is a prime divisor in $B$ then the local
ring $\OO_{\eta_E}$ at its generic point is a discrete valuation ring because $B$ is regular in codimension 1. Write
$F_E$ for the completion of $F$ with respect to $\ord_E$, then the valuation ring of $F_E$ is $\hat{\OO_{\eta_E}}$ the
completion of $\OO_{\eta_E}$ with respect to $\ord_E$.

If $V \subset B$ is an open subset and $E \in \cM_B (F)$ a place of $F$, we say that $E$ \emph{lies} above $V$ if
$\eta_E \in V$.

In particular, if $\tr.deg F/\K = 1$, then there exists a unique projective curve $B_k$ over $\K$ such that $\K(B) = F$ and
$\cM (F) = \cM_B (F)$.

\subsection{Adelic divisors and line bundles}\label{subsec:adelic-divisors-and-line-bundles-function-field}
The definition of adelic line bundles is easier in this setting because there are no archimedean places. The arithmetic
intersection number is just the intersection number of Chern classes of line bundles over projective varieties over $\K$.

Let $B$ be a projective model of $F$ over $\K$ and $U_F$ a quasiprojective variety over $F$. Let $X_B$ be a
quasiprojective model of $U_F$ over $B$ (notice that
$\dim_\K X_B = \dim_F U_F + d$). A \emph{model adelic divisor} over $X_B$ is the data $(\sX_B, \sD)$ of a projective
model $\sX_B$ of $X_B$ over $B$ and a divisor $\sD$ over $\sX_B$. By the valuative criterion of
properness we have a reduction map
\begin{equation}
  r_{\sX_B} : \sX_B^{\an} := \bigsqcup_{E \in \cM_B(F)} \sX_B^{\an, E} \rightarrow \sX_B.
  \label{eq:<+label+>}
\end{equation}
And using this reduction map, the divisor $\sD$ defines for every $E \in \cM_B (F)$ a Green function of $\sD_E$ over
$X_B^{\an, E}$. We write $\hat \Div (X_B / B)_{\mod}$ for the set of model adelic divisors over $X_B$.

An adelic divisor over $X_B$ is a Cauchy sequence of model adelic divisors with respect to a boundary divisor $\sD_0$ of
$X_B$. Here the notation $\sD \geq 0$ means that the divisor $\sD$ is effective and we write $\hat \Div (X_B/ B)$ for
the set of adelic divisors over $X_B$. An adelic divisor over $U_F$ is then an
adelic divisor over any quasiprojective model $X_B$ of $U_F$ over $B$ for any projective model $B$ of $F$ over $\K$. In
other terms we define the set of adelic divisors over $U_F$ as
\begin{equation}
  \hat \Div (U_F/ \K) := \varinjlim_{B} \varinjlim_{X_B} \hat \Div (X_B / B).
  \label{eq:<+label+>}
\end{equation}

Analogously, an adelic line bundle over $U_F$ is a Cauchy sequence of model adelic line bundles and we write $\hat \Pic
(U_F/ \K)$ for the set of adelic line bundles over $U_F$. If $B, H_1, \cdots,
H_{d-1}$ is a geometric polarisation of $F$, then we have the intersection number
\begin{equation}
  (L_0 \cdots L_n)_{\overline H} = L_0 \cdots L_n \cdot H_1 \cdots H_{d-1}.
  \label{eq:<+label+>}
\end{equation}

We also define $\hat \Pic_\infty (U_F / \K)$ and $\hat \DivInf (U_F / \K)$.
\begin{rmq}
  When $F$ is a finitely generated field over a number field $\K$ we can use either $\hat \Div (U_F/ \OO_\K)$ or
  $\hat \Div (U_F/ \K)$ whether we want to work in the arithmetic setting or the geometric setting. In this paper, we
  will always use the arithmetic setting because of the Northcott property.
\end{rmq}

\subsection{Moriwaki heights}\label{subsec:moriwaki-heights-function-field}
Let $L$ be an adelic line bundle over $U_F$, let $Z \subset U_F$ be a closed $ \overline F$-subvariety, we define the height
\begin{equation}
  h_{\overline L}^{ \overline H} = \frac{( L_{|Z})^{\dim Z +1}_{\overline H}}{(\dim Z +1) \deg_{L} (Z)}.
  \label{eq:<+label+>}
\end{equation}
In particular,
\begin{equation}
  \forall x \in U_F( \overline F), \quad h_{ L}^{ \overline H}(x) = \sum_{E \in \cM_B(F)} \sum_{y \in \Gal(\overline F /
  F)\cdot x} \ord_E (s(y)) \left( H_1 \dots  H_{d-1} \cdot E \right)
  \label{eq:<+label+>}
\end{equation}
where $s$ is any non vanishing section of $L$ at the image of $x$ in $U_F$.

Suppose $d> 1$. Let $ H$ be a nef adelic line bundle over $B$, we say that $ H$ satisfies the \emph{Moriwaki condition} if
$ H^d = 0$ and if there exists a curve $C$ over $\K$ with a morphism $B \rightarrow C$ with function field $K \subset F$
of transcendence degree 1 over $\K$ such that over the generic fiber $H_K^{d-1} > 0$. If $d=1$ we make the convention
that the Moriwaki condition is always satisfied.
Theorem \ref{thm:arithm-equid} also holds in this setting but there are no archimedean places.
We also have the Northcott property in the geometric setting
\begin{thm}[Theorem 5.3.1 of \cite{yuanAdelicLineBundles2023}, Northcott
  property]\label{thm:northcott-property-geometric}
  Let $F$ be a finitely generated field over a \emph{finite} field $\K$. Let $X_F$ be a \emph{projective} variety over
  $F$ and $L \in \hat \Pic (X_F/ \K)$ an integrable adelic line bundle with ample underlying line bundle. For
  any nef and big geometric polarisation $\overline H_1, \dots, \overline H_d$ of $F$, for any $A, M > 0$ the set
  \begin{equation}
    \left\{ x \in X_F (\overline F): \deg (x) \leq A, h_{ L}^{\overline H} (x) \leq M  \right\}
    \label{eq:<+label+>}
  \end{equation}
  is finite.
\end{thm}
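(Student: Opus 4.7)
The plan is to reduce the statement to a classical finiteness of bounded-degree cycles on a projective $\K$-variety, leveraging that $\K$ is a finite field.

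First, I would reduce to the case of a model adelic line bundle. Writing $\overline L$ as a Cauchy limit $\lim_i \overline{\sL_i}$ with respect to a fixed boundary divisor $\overline{\sD_0}$, the difference $\bigl|h^{\overline H}_{\overline L}(x) - h^{\overline H}_{\overline{\sL_i}}(x)\bigr|$ is bounded by $\epsilon_i h^{\overline H}_{\overline{\sD_0}}(x)$ with $\epsilon_i \to 0$, so it suffices to treat the model case where $\overline L = \overline{\sL}$ is defined on a projective model $q : \sX_B \to B$ of $X_F$ over $B$. After a birational modification of $\sX_B$, we may further assume that $\sL$ is relatively ample over $B$.

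Second, for a closed point $x \in X_F(\overline F)$ of degree $\leq A$, let $Y := \overline{\{x\}} \subset \sX_B$ be its Zariski closure, a horizontal irreducible subvariety of dimension $d$ mapping surjectively onto $B$. The integration formula \eqref{eq:multiplicity} gives
\begin{equation*}
\deg(x) \cdot h^{\overline H}_{\overline \sL}(x) = Y \cdot \sL \cdot q^* H_1 \cdots q^* H_{d-1}.
\end{equation*}
The hypotheses $\deg(x) \leq A$ and $h^{\overline H}_{\overline L}(x) \leq M$ therefore bound this mixed intersection number uniformly in $x$. Using relative ampleness of $\sL$ together with the nef-and-bigness of each $H_i$, one constructs a fixed ample line bundle $\sA$ on $\sX_B$ and, by comparing each $H_i$ with an ample perturbation $H_i + \delta A_B$ on $B$ and invoking Kleiman's criterion on the generic fiber, deduces a uniform bound on the absolute degree $Y \cdot \sA^{n+d-1}$ in terms of $A$ and $M$.

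Third, subvarieties of $\sX_B$ of a fixed dimension and bounded $\sA$-degree are parametrized by finitely many components of the Hilbert scheme of $\sX_B$, each a projective scheme of finite type over the \emph{finite} field $\K$, hence with finitely many $\K$-points. Consequently the set of admissible closures $Y$ is finite, and each $Y$ contributes only finitely many closed points of $X_F$ in its generic fiber, yielding the claim. The main obstacle is the second step: converting a mixed intersection number involving only a single copy of the relatively ample class $\sL$ and pullbacks of nef-and-big classes into a bound on an absolute top-degree of $Y$. This is precisely where the big hypothesis on the $\overline H_i$ (rather than mere nefness) is essential, since otherwise the polarization could fail to detect horizontal subvarieties whose image in $B$ is contained in the non-ample locus of some $H_i$.
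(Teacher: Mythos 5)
The paper does not prove this statement: it is quoted verbatim as Theorem~5.3.1 of \cite{yuanAdelicLineBundles2023} and used as a black box, so there is no in-paper argument to compare against. I'll therefore assess your sketch on its own terms.

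Your overall strategy (reduce to a model that is globally nef on a projective $\K$-model $\sX_B$, translate the height bound into a bound on the mixed intersection number $Y\cdot\sL\cdot q^*H_1\cdots q^*H_{d-1}$ of the closure $Y=\overline{\{x\}}$, convert that into a bound on an absolute $\sA$-degree of $Y$, and then use finiteness of bounded-degree subvarieties over the finite field $\K$) is the right shape, and you correctly identify step~2 as the crux. However, as written, step~2 has a genuine gap. Comparing ``with an ample perturbation $H_i + \delta A_B$'' and ``invoking Kleiman's criterion on the generic fiber'' does not by itself bound $Y\cdot\sA^{n+d-1}$: you are given only \emph{one} copy of the relatively ample class $\sL$ and $d-1$ pullbacks from $B$, whereas expanding $\sA^d=(\sL+q^*A_B)^d$ produces terms $Y\cdot\sL^{j}(q^*A_B)^{d-j}$ for all $0\leq j\leq d$, and you have no direct control of the terms with $j\geq 2$. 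Moreover, replacing each big-and-nef $H_i$ by an ample $A_B$ cannot be done by a naive binomial expansion, because the cross terms $Y\cdot\sL\cdot q^*E_{i_1}\cdots q^*E_{i_k}\cdot(\text{nef})$ (with $mH_i=A_B+E_i$ by the Kodaira lemma) are not obviously nonnegative once one intersects with more than one effective divisor class.

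The argument can be closed, but it needs two further ingredients you did not state. First, do the Kodaira replacement one factor at a time: at each step the cycle $Y$, being irreducible of dimension $d$ and surjecting onto $B$, is not contained in $q^*E_i$, so $Y\cdot q^*E_i$ is an effective cycle and intersecting it with the remaining nef factors gives $\geq 0$; this yields $Y\cdot\sL\cdot(q^*A_B)^{d-1}\leq m^{d-1}(Y\cdot\sL\cdot q^*H_1\cdots q^*H_{d-1})$. Second, to bound the higher powers $a_j:=Y\cdot\sL^{j}(q^*A_B)^{d-j}$ you need the Khovanskii--Teissier log-concavity $a_j^2\geq a_{j-1}a_{j+1}$ for nef classes together with the lower bound $a_0=\deg(Y/B)\,A_B^{\,d}\geq A_B^{\,d}>0$; this propagates the bound on $a_1$ to all $a_j$. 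Without these two points the step from a mixed intersection bound to an absolute degree bound does not go through. There is also a smaller point you gloss over in step~1: to get a model $\sL$ that is nef on all of $\sX_B$ (not just ample on the generic fiber $X_F$) you should add $k\,q^*A_B$ and note that $h^{\overline H}_{q^*A_B}$ is the constant $A_B\cdot H_1\cdots H_{d-1}$ on the set of points of bounded degree, so this changes the height only by an additive constant.
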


We conclude this section with the following result
\begin{prop}\label{prop:intersection-not-trivial}
  Let $E \in \cM (F)$, then there exists a geometric polarisation $H$ of $F$ satisfying the Moriwaki condition such
  that $E \cdot H^{d-1} > 0$
\end{prop}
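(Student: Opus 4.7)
For $d = 1$ the polarisation $(B)$ has no line bundles, the Moriwaki condition is automatic, and $\mu(E) = \deg E > 0$. So I focus on $d \geq 2$: I would realize $E$ as a prime divisor on a normal projective model $B_0$ of $F$ over $\K$ and aim to build a single nef line bundle $H$ on a suitable birational model $B$ of $B_0$ such that $H^d = 0$, $H_K^{d-1} > 0$ on the generic fiber of some morphism $B \to C$ to a smooth projective curve, and $E \cdot H^{d-1} > 0$. The geometric polarisation $(B; H, \ldots, H)$ with $d - 1$ copies of $H$ will then do the job.

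The construction I propose is to take $H$ as the pullback $\phi^* \OO_{\mathbb P^{d-1}}(1)$ under a generic rational projection $\phi : B_0 \dashrightarrow \mathbb P^{d-1}$ defined by $d$ generic sections of a very ample line bundle on $B_0$. By Bertini-type genericity the restriction $\phi|_E$ will be dominant and hence generically finite of some degree $d_E > 0$ (since $\dim E = d-1$), and the base locus of $\phi$ will have dimension at most zero. I would then choose $g \in F$ transcendental over the image field $\K(\phi(B_0))$, so that the product rational map $(\phi, g) : B_0 \dashrightarrow \mathbb P^{d-1} \times \mathbb P^1$ becomes generically finite between varieties of the common dimension $d$. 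Resolving indeterminacies of $\phi$ and $g$ by successive blow-ups and invoking Lemma \ref{lemme:system-quasiprojective-models} to stay within projective models of $F$ yields a normal projective model $B$ on which $\phi, g$ are morphisms and $E$ remains a prime divisor; I let $C$ denote the smooth projective curve obtained from the Stein factorization of $g : B \to \mathbb P^1$.

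Setting $H := \phi^* \OO(1)$, the required properties all follow from the projection formula. The line bundle $H$ is nef as the pullback of an ample one, and $H^d = \phi^*(c_1(\OO(1))^d) = 0$ because $\OO(1)^d$ vanishes on $\mathbb P^{d-1}$ for dimension reasons. The Moriwaki condition with curve $C$ holds because generic finiteness of $(\phi, g)$ ensures that $\phi|_{B_K} : B_K \to \mathbb P^{d-1}$ is generically finite dominant for the generic fiber $B_K$ of $B \to C$, giving $H_K^{d-1} = \deg(\phi|_{B_K}) > 0$. Finally, since $c_1(\OO(1))^{d-1}$ is represented by a single point on $\mathbb P^{d-1}$ and $\phi_*[E] = d_E\,[\mathbb P^{d-1}]$, the projection formula yields $E \cdot H^{d-1} = d_E > 0$.

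The main obstacle I anticipate is ensuring that all the genericity conditions (dominance of $\phi|_E$, generic finiteness of $(\phi, g)$, and the avoidance of the base loci by $E$ after resolution) hold simultaneously: this amounts to a series of Bertini and general-position arguments, which go through whenever $\K$ is infinite. For finite $\K$ I would handle the argument by a preliminary base change to an infinite extension and then descend, which is routine.
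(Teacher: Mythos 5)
Your proposal is correct, and it attacks the problem the same way the paper does -- pick a birational model on which one can pull back an ample class from a $(d-1)$-dimensional target, and arrange by genericity that $E$ is not contracted -- but the bookkeeping is genuinely different. The paper builds a single morphism $q: B_\K \to (\P^1_\K)^d$ and sets $H = q^*(H_1 + \dots + H_{d-1})$; the product structure of $(\P^1)^d$ then does all the work at once: the Moriwaki curve $C$ is simply the $d$-th coordinate projection, $H^d = 0$ is immediate, and $H_K^{d-1} = (H_1+\dots+H_{d-1})^{d-1}\cdot H_d > 0$ is a one-line multinomial computation. You instead split the datum into a generic linear projection $\phi : B_0 \dashrightarrow \P^{d-1}$ and a separate transcendental $g$, recover $C$ via Stein factorization of $g$, and verify the Moriwaki positivity by a transcendence-degree count (showing $K(\P^{d-1}) \hookrightarrow F$ is finite so $\phi|_{B_K}$ is generically finite). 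Both routes are sound; the paper's is a bit tighter because choosing the map to $(\P^1)^d$ up front makes the auxiliary curve and the positivity of $H_K^{d-1}$ automatic, whereas you have to check generic finiteness of $(\phi,g)$ and of $\phi$ on the generic fiber of $B \to C$ as separate steps. Two small corrections: the invocation of Lemma \ref{lemme:system-quasiprojective-models} when resolving indeterminacies is a misattribution (that lemma concerns cofinality of open neighbourhoods inside a given quasiprojective model) -- what you actually need is just that a blowup of a projective model of $F$ is again a projective model of $F$, which is immediate; and the finite-$\K$ caveat at the end is real but applies equally to the paper's proof, which also silently assumes enough genericity of $q$, so it is not a defect specific to your argument.
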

\begin{proof}
  Let $B_\K$ be a projective model of $F$ over $\K$ such that $E \subset B_\K$ we can suppose up to blowing up that there
  exists a morphism
  \begin{equation}
    q : B_\K \rightarrow (\P^1_\K)^d
    \label{eq:<+label+>}
  \end{equation}
  Let $\pi_i : (\P^1_\K)^d \rightarrow \P^1$ be the projection to the $i$-th coordinate. Let $H_i = \pi_i^* \OO(1)$. Set
  \begin{equation}
    H = q^* \left( H_1 + \dots + H_{d-1} \right),
  \end{equation}
  $H$ satisfies the Moriwaki condition since
  \begin{equation}
    \left( H_1 + \dots + H_{d-1} \right)^d = 0
    \label{eq:<+label+>}
  \end{equation}
  and if $K = \pi_d^* \K(\P^1_\K)$, then
  \begin{equation}
    H_K^{d-1} = \left( H_1 + \dots + H_{d-1} \right)^{d-1} \cdot H_d = 1 > 0.
    \label{eq:<+label+>}
  \end{equation}
  If $H \cdot E = 0$, then there must exist $1 \leq i \leq d-1$ such that $E$ is an irreducible component of a fiber
  of the morphism $\pi_i \circ q$. Thus, for a generic choice of $q$ this is not the case.
\end{proof}

\section{The algebraic torus}\label{sec:algebraic-torus}
\subsection{Proof of Theorem \ref{bigthm:rigidity-periodic-points} for the algebraic
torus}\label{subsec:rigidity-algebraic-torus}

Let $\K$ be any algebraically closed field, Any $\K$-automorphism of $\G_m^2$ is of the form
\begin{equation}
  f(x,y) = \left( \alpha x^a y^b, \beta x^c y^d \right)
  \label{eq:<+label+>}
\end{equation}
where $\alpha, \beta \in \K^\times$ and $\begin{pmatrix}
  a & b \\ c & d
\end{pmatrix}
\in \GL_2 (\Z)$. Let $f_A$ be the automorphism induced by a matrix $A \in \GL_2(\Z)$, then the set of periodic points of
$f_A$ is
\begin{equation}
\Per(f_A) = \mathbb U \times \mathbb U
  \label{eq:periodic-points-fA}
\end{equation}
where $\mathbb U$ is the set of roots of unity. We can always conjugate an automorphism $f$ by a translation to an
automorphism of the form $f_A$ (this bounds to finding a fixed point of $f$ and conjugating by a translation such that
the fixed point is $(1,1)$). Theorem \ref{bigthm:rigidity-periodic-points} is easier for the algebraic torus as we have
the following result.
\begin{prop}
  Let $f,g \in \Aut(\G_m^2)$ be two loxodromic automorphisms, then \begin{equation}
    \Per(f) \cap \Per(g) \neq \emptyset \Leftrightarrow \Per(f) = \Per(g).
    \label{eq:<+label+>}
  \end{equation}
\end{prop}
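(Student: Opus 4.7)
The backward implication is trivial. For the forward one, the plan is to use equation (\ref{eq:periodic-points-fA}) — which identifies the periodic points of a \emph{fixed‑point‑normalised} loxodromic monomial automorphism with the universal set $\mathbb U\times\mathbb U$ — together with the observation that a single shared periodic point can be promoted to a common fixed point after passing to powers. The forward direction will then follow because the coset $\tau_p(\mathbb U\times\mathbb U)$ depends only on the translate $p$ and not on the automorphism.

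More precisely, assume $\Per(f)\cap\Per(g)\neq\emptyset$ and pick $p$ in the intersection. Choose $N,M\geq 1$ with $f^N(p)=p$ and $g^M(p)=p$. Since $\lambda_1(f^N)=\lambda_1(f)^N>1$ and $\Per(f^N)=\Per(f)$, and similarly for $g$, I may replace $f,g$ by $f^N,g^M$ and assume $p$ is a common fixed point. Consider the translation $\tau_p(x,y)=(p_1x,p_2y)$, which sends $(1,1)$ to $p$. If $h(x,y)=(\alpha x^ay^b,\beta x^cy^d)$, then
\begin{equation}
  (\tau_p^{-1}h\tau_p)(x,y)=\bigl(p_1^{-1}\alpha p_1^ap_2^b\,x^ay^b,\ p_2^{-1}\beta p_1^cp_2^d\,x^cy^d\bigr),
\end{equation}
and the fixed-point condition $h(p)=p$ is precisely $p_1^{-1}\alpha p_1^ap_2^b=1$ and $p_2^{-1}\beta p_1^cp_2^d=1$. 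Hence $\tau_p^{-1}f\tau_p=f_A$ and $\tau_p^{-1}g\tau_p=f_B$ for some loxodromic $A,B\in\GL_2(\Z)$.

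Applying (\ref{eq:periodic-points-fA}) to $f_A$ and $f_B$ gives $\Per(\tau_p^{-1}f\tau_p)=\mathbb U\times\mathbb U=\Per(\tau_p^{-1}g\tau_p)$, and conjugating back yields $\Per(f)=\tau_p(\mathbb U\times\mathbb U)=\Per(g)$. There is no substantive obstacle here: the entire content of the argument is the reduction to a common fixed point (which uses only $\Per(f)=\Per(f^N)$ and that loxodromicity is preserved under iteration) combined with the intrinsic coset description $\Per(h)=q\cdot(\mathbb U\times\mathbb U)$ attached to any fixed point $q$ of a loxodromic automorphism $h$ of $\G_m^2$.
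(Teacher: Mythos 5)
Your proof is correct and takes essentially the same route as the paper's: iterate to promote a shared periodic point to a common fixed point, conjugate by a translation taking that point to $(1,1)$, and invoke \eqref{eq:periodic-points-fA}. You have merely spelled out the conjugation computation that the paper leaves implicit.
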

\begin{proof}
  Suppose $\Per(f) \cap \Per(g)$ is not empty. Up to iterating $f$ and $g$, we can suppose that $f,g$ have a common
  fixed point. Therefore up to conjugation they are of the form $f_A$ and  $g_B$ with $A,B \in \GL_2(\Z)$ and thus they
  have the same set of periodic points by \eqref{eq:periodic-points-fA}.
\end{proof}

\subsection{A characterisation of the algebraic torus}\label{subsec:charac-algebraic-torus}
Let $\K$ be an algebraically closed field of any characteristic.
A \emph{quasi-abelian} variety is an algebraic group $Q$ such that there exists an exact sequence of algebraic groups
\begin{equation}
  1 \rightarrow T \rightarrow Q \rightarrow A \rightarrow 1
  \label{eq:<+label+>}
\end{equation}
where $T$ is an algebraic torus and $A$ is an abelian variety. For any algebraic variety $V$, there exists a universal
quasi-abelian variety $\QAlb(V)$ equipped with a morphism $q : V \rightarrow \QAlb(V)$ such that any morphism $V
\rightarrow Q$ where $Q$ is a quasi-abelian variety factors through $q$. We call $\QAlb(V)$ the \emph{quasi-Albanese}
variety of $V$. If $V$ is projective, then $\QAlb(V)$ is the Albanese variety of $V$. For a general reference for
quasi-abelian varieties, we refer to \cite{serreExposesSeminaires195019992001, fujinoQUASIALBANESEMAPS2015}.

We have the following characterisation of the algebraic torus. It was proven in \cite{abboudDynamicsEndomorphismsAffine2023}
\S 10.
\begin{thm}
  Let $X_0$ be a normal affine surface over an algebraically closed field. If $X_0$ admits a loxodromic automorphism,
  then either $\QAlb(X_0) = 0$ or $X_0 \simeq \G_m^2$.
\end{thm}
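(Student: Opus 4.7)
The strategy is to analyze how a loxodromic $f \in \Aut(X_0)$ interacts with the quasi-Albanese morphism $q: X_0 \to Q := \QAlb(X_0)$. By universality, $f$ descends to a morphism $f_\star : Q \to Q$ which, up to translation, is an isogeny of the semi-abelian variety $Q$. Writing the structure sequence $1 \to T \to Q \to A \to 1$ with $T$ a torus and $A$ an abelian variety, the task is to show that if $Q$ is nontrivial then $A = 0$, $T = Q \simeq \G_m^2$, and $q$ is an isomorphism. Since $\dim X_0 = 2$, we have $\dim Q \in \{0,1,2\}$.

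First I would uniformly rule out every configuration producing an $f$-invariant fibration of $X_0$ by curves: this covers $\dim Q = 1$ (via $q$ itself, after Stein factorisation) and $\dim Q = 2$ with $\dim A = 1$ (via the composition $X_0 \to Q \to A$, whose generic fibres are cosets of $T$). On a projective completion $X$ of $X_0$, the class of a fibre yields a nonzero $f^*$-fixed isotropic class in $\NS(X) \otimes \R$. But for a loxodromic $f$, the intersection form on $\NS(X) \otimes \R$ is of Minkowski type and the $f^*$-invariant isotropic rays in the nef cone are precisely the two lines spanned by the $\lambda_1(f)^{\pm 1}$-eigenvectors, whose eigenvalues differ from $1$; hence no nonzero $f^*$-fixed isotropic nef class can exist.

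The remaining possibility $\dim A = 2$ (so $Q$ is an abelian surface) is more delicate. Pulling back the two linearly independent global regular $1$-forms on $Q$ produces two linearly independent regular $1$-forms on $X_0$ that extend without log poles to a smooth completion $\overline X$, forcing the log irregularity of $X_0$ to equal its ordinary irregularity. Combined with $\overline\kappa(X_0) \in \{0, -\infty\}$ (which was recalled above as a consequence of loxodromicity plus the invariance of the Iitaka fibration) and with the fact that $f_\star$ would be an isogeny of an abelian surface with positive entropy, this configuration is heavily constrained, and I would invoke the structure results from \cite{abboudDynamicsEndomorphismsAffine2023} for affine surfaces supporting loxodromic automorphisms to exclude it.

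Once $Q \simeq \G_m^2$, the map $q : X_0 \to \G_m^2$ is generically finite and dominant between normal affine surfaces, and the main obstacle is to promote it to an isomorphism. The plan is that, by the construction of $Q$ from logarithmic $1$-forms along the boundary divisor of $\overline X$, the map $q$ is log-\'etale along the boundary, hence \'etale in codimension one, and therefore \'etale everywhere on the normal surface $X_0$ by purity of the branch locus. Since every connected finite \'etale cover of $\G_m^2$ is itself isomorphic to $\G_m^2$ via a monomial isogeny, we conclude $X_0 \simeq \G_m^2$; the detailed execution of this last classification step, and the case analysis handling the abelian-surface case above, are precisely what is carried out in \cite{abboudDynamicsEndomorphismsAffine2023} \S 10.
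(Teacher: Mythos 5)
Your overall strategy (descend $f$ through the quasi-Albanese map, then case-analyse $Q$) matches the paper's, and your intersection-theoretic argument for ruling out $f$-invariant curve fibrations is a correct and pleasant way to make explicit a step that the paper leaves implicit (a loxodromic $f^*$ has no nonzero fixed isotropic nef class because the $1$-eigenspace sits in the negative-definite part of $\NS(X)_\R$). Combined with the observation that your Stein-factorisation trick actually handles \emph{every} situation in which $\overline{q(X_0)}$ has dimension $\le 1$ (not only $\dim Q = 1$), this part replaces cleanly the paper's reliance on Abramovich's theorem that $V/K^0$ is of log general type. However, two genuine gaps remain in your argument.

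First, your treatment of the case $A \ne 0$, $q$ dominant is not an argument. The paper resolves this via log Kodaira dimension: loxodromicity forces $\overline\kappa(X_0) \in \{-\infty, 0\}$; if $\overline\kappa(X_0) = -\infty$ then Miyanishi--Russell produce an open subset of a resolution isomorphic to $\A^1 \times C$, whose quasi-Albanese is at most one-dimensional, incompatible with $\dim \QAlb(X_0) = 2$; if $\overline\kappa(X_0) = 0$ then Kojima's theorem gives rationality, killing all maps to abelian varieties. Your ``pull back $1$-forms and observe the log irregularity equals the ordinary irregularity'' does not, as stated, exclude anything, and your explicit invocation of \cite{abboudDynamicsEndomorphismsAffine2023} to close the case means this step of the proposal is not self-contained.

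Second, and more seriously, the step ``$q$ is log-\'etale along the boundary, hence \'etale in codimension one, hence \'etale by purity of branch locus'' is unjustified and I do not believe it is true at this level of generality. Purity of the branch locus requires $q$ to be \emph{finite}, which you have not shown; and nothing about the construction of $\QAlb$ via logarithmic $1$-forms ensures that the quasi-Albanese map is unramified in codimension one on a general normal affine surface. The paper's actual argument here is quite different: having established $\overline\kappa(X_0) = 0$ and rationality, it observes that the rank of $\mathbf{k}[X_0]^\times/\mathbf{k}^\times$ is at least $2$ and then appeals to Kojima's Theorem~3.1 to identify $X_0$ with $\G_m^2$ directly; $q$ plays no further role. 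You would need to either supply the missing finiteness and log-\'etaleness, or replace this final classification step with something along the paper's lines.
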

\section{Picard-Manin space at infinity}
For this section, let $S_F$ be a normal affine surface over a field $F$.
\subsection{Completions}
A completion of $S_F$ is a projective model $X_F$ of $S_F$ over $F$. We call
$X_F \setminus \iota_{X_F} (S_F)$ the \emph{boundary} of $S_F$ in $X_F$. By \cite{goodmanAffineOpenSubsets1969} Proposition
1, it is a curve. We will also refer to it at the part "at infinity" in $X_F$. For any completion $X_F$ of $S_F$ we define
$\DivInf (X_F)_\A = \oplus \A E_i$ where $\A = \Z, \Q, \R$ and $X_F \setminus S_F = \bigcup E_i$, the space of
$\A$-divisors at infinity. For any
two completions $X_F,Y_F$ we have a birational map $\pi_{{X_F Y_F}} = \iota_{Y_F} \circ \iota_{X_F}^{-1} : X_F \dashrightarrow Y_F$. If this map is
regular, we say that $\pi_{{X_F Y_F}}$ is a \emph{morphism of completions} and that $X_F$ is \emph{above} $Y_F$. For any
completion
$X_F,Y_F$ there exists a completion $Z$ above $X_F$ and $Y_F$. Indeed, take $Z$ to be a resolution of indeterminacies of
$\pi_{{X_F Y_F}} : X_F \dashrightarrow Y_F$. A morphism of completions defines a pullback and a pushforward operator
$\pi_{{X_F Y_F}}^*, (\pi_{{X_F Y_F}})_*$ on divisors and Néron-Severi classes. We have the projection formula,
\begin{equation}
  \forall \alpha \in \NS (X_F), \beta \in \NS (Y_F), \alpha \cdot \pi_{{X_F Y_F}}^* \beta = (\pi_{{X_F Y_F}})_* \alpha
  \cdot \beta.
  \label{<+label+>}
\end{equation}

\begin{lemme}\label{lemme:divisors-at-infinity-in-Neron-Severi}
  Let $S_F$ be a normal affine surface with $\QAlb(S_F) = 0$, then for every completion $X_F$ of $S_F$, the natural group
  homomorphism
  \begin{equation}
    \DivInf(S_F)_\A \rightarrow \NS(X_F)_\A
    \label{eq:<+label+>}
  \end{equation}
  is injective.
\end{lemme}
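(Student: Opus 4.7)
The plan is to reduce to the setting of a smooth projective completion over an algebraically closed field, and then use the vanishing of $\Pic^{0}$ forced by $\QAlb(S_F)=0$ to conclude that a numerically trivial boundary divisor must be principal, hence zero.

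As a first reduction, it suffices to handle $\A=\Z$: since $\DivInf(S_F)_{\Z}=\bigoplus_{i}\Z E_{i}$ is $\Z$-free, hence flat, tensoring an injection $\DivInf(S_F)_{\Z}\hookrightarrow \NS(X_F)$ with $\Q$ or $\R$ preserves injectivity. Next, I would base change to an algebraic closure $\overline{F}$: each $F$-irreducible boundary component decomposes into a Galois orbit of $\overline{F}$-components, so $\DivInf(S_F)_{\Z}\hookrightarrow \DivInf(S_{\overline{F}})_{\Z}$, and the class map to $\NS$ is compatible with base change. Finally, if $X_F$ is not smooth I would take a resolution of singularities $\pi\colon \tilde X_F\to X_F$ (available for normal surfaces in any characteristic by Lipman). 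The pullback $\pi^{*}$ preserves numerical equivalence and sends distinct boundary components to linearly independent effective combinations of the new boundary components, while $\pi_{*}\pi^{*}=\id$ on $\R$-Cartier divisors; so injectivity for $\tilde X_F$ yields it for $X_F$.

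Assume now $F$ is algebraically closed and $X_F$ is smooth projective. The key step is to deduce from $\QAlb(S_F)=0$ that $\Pic^{0}(X_F)=0$. Indeed, the composition $S_F\hookrightarrow X_F\to \mathrm{Alb}(X_F)$ is a morphism into a quasi-abelian variety, hence factors through the zero map $S_F\to \QAlb(S_F)=0$. So the Albanese morphism is constant on the dense open $S_F$ and therefore constant on all of $X_F$; since $\mathrm{Alb}(X_F)$ is generated as a group by the image of the Albanese map, $\mathrm{Alb}(X_F)=0$, and by duality $\Pic^{0}(X_F)=0$.

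With this in place, suppose $D=\sum_{i} n_{i} E_{i}\in \DivInf(S_F)_{\Z}$ has trivial class in $\NS(X_F)$. Using $\NS(X_F)=\Pic(X_F)/\Pic^{0}(X_F)=\Pic(X_F)$, one obtains $D=\div(f)$ for some $f\in F(X_F)^{\times}$. Since $\Supp D\subset X_F\setminus S_F$, the function $f$ restricts to an invertible regular function on $S_F$, giving a morphism $f\colon S_F\to \G_{m}$. By the universal property this morphism factors through $\QAlb(S_F)=0$, so (after translating to make it pointed) it is constant; hence $\div(f)=0$ and $D=0$. The main obstacle I expect is the identification $\QAlb(S_F)=0 \Rightarrow \Pic^{0}(X_F)=0$, which in positive characteristic or over non-algebraically closed $F$ requires some care concerning base change of quasi-Albanese varieties and possible non-reducedness of $\Pic^{0}$; the reductions above are designed precisely to place us in the classical setting where this is standard.
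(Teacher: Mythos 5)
Your proof is essentially the same as the paper's argument, just unpacked: the paper also factors the map through $\Pic(X_F)$ and invokes exactly the two consequences of $\QAlb(S_F)=0$ that you isolate --- $\OO(S_F)^\times = F^\times$ (so a principal divisor supported at infinity comes from a unit on $S_F$ and is therefore zero) and $\Pic^0(X_F)=0$ (being the dual of the Albanese variety, which vanishes). Your preliminary reductions (to $\Z$-coefficients, to $\overline F$, via a resolution of singularities) are sensible protective steps that the paper omits; the only care needed in the resolution step is to remember that $\DivInf(\tilde X_F)$ acquires the exceptional components, but since you only apply $\pi_*\pi^*=\mathrm{id}$ to boundary divisors pulled back from $X_F$, the argument still closes.
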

\begin{proof}
  First of all, since $\QAlb(S_F) = 0$, we have that $\OO(S_F)^\times = F^\times$, thus the group homomorphism
  $\DivInf(S_F)_\A \rightarrow \Pic (X_F)_\A$ is injective. Then, the group homomorphism $\Pic (X_F) \rightarrow \NS
  (X_F)$ is injective because its kernel is the dual of the Albanese variety of $X_F$ which must be trivial since
  $\QAlb(S_F) = 0$.
\end{proof}

\subsection{Weil and Cartier classes}
If $\pi_{YX} : Y_F \rightarrow X_F$ are two completions of $S_F$ then we have the embedding defined by the
pullback operator
\begin{equation}
  \pi_{YX}^* : \DivInf(X_F)_\A \hookrightarrow \DivInf(Y_F)_\A.
  \label{<+label+>}
\end{equation}
We define the space of Cartier divisors at infinity of $S_F$ to be the direct limit
\begin{equation}
  \Cinf (S_F) := \varinjlim_{X_F} \DivInf(X_F)_\R.
  \label{<+label+>}
\end{equation}
In the same way we define the space of Cartier classes of $S_F$
\begin{equation}
  \cNS (S_F) := \varinjlim_{X_F} \NS(X_F)_\R.
  \label{<+label+>}
\end{equation}
An element of $\cNS (S_F)$ is an equivalence class of pairs $(X_F, \alpha)$ where $X_F$ is a completion of $S_F$ and
$\alpha \in \NS (X_F)_\R$ such that $(X_F, \alpha) \simeq (Y_F, \beta)$ if and only if there exists a completion $Z$ above
$X_F,Y_F$ such that $\pi_{ZX}^* \alpha = \pi^*_{ZY} \beta$. We say that $\alpha \in \cNS (S_F)$ is \emph{defined} in $X_F$ if it
is represented by $(X_F, \alpha)$.
We have a natural embedding $\Cinf (S_F) \hookrightarrow \cNS(S_F)$, we still write $\Cinf (S_F)$ for its image in $\cNS
(S_F)$. We also define the space of Weil classes
\begin{equation}
  \wNS (S_F) := \varprojlim_{X_F} \NS(X_F)_\R
  \label{<+label+>}
\end{equation}
where the compatibility morphisms are given by the pushforward morphisms $(\pi_{YX})_* : \NS (Y_F) \rightarrow \NS(X_F)$ for
a morphism of
completions $\pi_{YX}: Y_F \rightarrow X_F$.
An element of this inverse limit is a family $\alpha = (\alpha_{X_F})_{X_F}$ such that if $X_F,Y_F$ are two completions of $S_F$
with $Y_F$ above $X_F$, then $(\pi_{YX})_* \alpha_{Y_F} = \alpha_{X_F}$. We call $\alpha_{X_F}$ the \emph{incarnation} of
$\alpha$ in $X_F$. We have a natural embedding $\cNS (S_F) \hookrightarrow \wNS (S_F)$. We also define the space of Weil
divisors at infinity
\begin{equation}
  \Winf (S_F) := \varprojlim_{X_F} \DivInf(X_F)_\R
  \label{<+label+>}
\end{equation}
and we have the commutating diagram
\begin{equation}
  \begin{tikzcd}
    \Cinf(S_F) \ar[r, hook] \ar[d, hook] & \cNS (S_F) \ar[d, hook] \\
    \Winf(S_F) \ar[r, hook] & \wNS(S_F)
  \end{tikzcd}
  \label{<+label+>}
\end{equation}
Thanks to the projection formula, the intersection form defines a perfect pairing
\begin{equation}
  \cNS (S_F) \times \wNS (S_F) \rightarrow \R
  \label{<+label+>}
\end{equation}
defined as follows. If $\alpha \in \cNS (S_F)$ is defined in $X_F$ and $\beta \in \wNS (S_F)$, then
\begin{equation}
  \alpha \cdot \beta = \alpha_{X_F} \cdot \beta_{X_F}
  \label{<+label+>}
\end{equation}
An element $\alpha \in \Winf(S_F)$ is \emph{effective} if for every completion $X_F$, $\alpha_{X_F}$ is an effective
divisor. We write $\alpha \geq \beta$ if $\alpha - \beta$ is effective. An element $\beta \in \wNS (S_F)$ is
\emph{nef} if for every completion $X_F, \beta_{X_F}$ is nef.

\subsection{The Picard-Manin space of $S_F$}
We provide $\wNS (S_F)$ with the topology of the inverse limit, we call it the weak topology, $\cNS(S_F)$ is
dense in $\wNS (S_F)$ for this
topology. Analogously, $\Cinf (S_F)$ is dense in $\Winf(S_F)$.

We define $\cD_\infty$ for the set of prime divisors at infinity. An element of $\cD_\infty$ is an equivalence class of
pairs $(X_F,E)$ where $X_F$ is a completion of $S_F$ and $E$ is a prime divisor at infinity. Two pairs $(X_F,E), (Y_F, E')$
are equivalent if the birational map $\pi_{{X_F Y_F}}$ sends $E$ to $E'$. We will just write $E \in \cD_\infty$ instead of
$(X_F,E)$. We define the function $\ord_E : \Winf (S_F) \rightarrow \R$ as follows. Let $\alpha \in S_F$, if $X_F$ is any
completion where $E$ is defined (in particular $(X_F,E)$ represents $E \in \cD_\infty$), then $\alpha_{X_F}$ is of the form
\begin{equation}
  \alpha_{X_F} = a_E E + \sum_{F \neq E} a F
  \label{<+label+>}
\end{equation}
and we set $\ord_E (\alpha_{X_F}) = a_E$. This does not depend on the choice of $(X_F,E)$.
\begin{lemme}[\cite{boucksomDegreeGrowthMeromorphic2008} Lemma 1.5]\label{lemme:weak-topology}
  The map
  \begin{equation}
    \alpha \in \Winf (S_F) \mapsto (\ord_E (\alpha))_{E \in \cD_\infty} \in \R^{\cD_\infty}
    \label{<+label+>}
  \end{equation}
  is a homeomorphism for the product topology.
\end{lemme}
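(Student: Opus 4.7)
The plan is to construct an explicit inverse to the stated map and then check continuity in both directions using the universal property of the inverse limit $\Winf(S_F) = \varprojlim_{X_F} \DivInf(X_F)_\R$. The key geometric input is the following description of pushforward: if $\pi_{YX} : Y_F \to X_F$ is a morphism of completions and $E$ is a prime divisor at infinity in $Y_F$, then $(\pi_{YX})_* E = E'$ if $E$ is the strict transform of a prime divisor $E'$ at infinity in $X_F$, and $(\pi_{YX})_* E = 0$ if $E$ is $\pi_{YX}$-exceptional.

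First I would check that the map is well defined and injective. For each completion $X_F$, the vector space $\DivInf(X_F)_\R$ has as a basis the finitely many prime divisors at infinity contained in $X_F$, so any $\alpha \in \Winf(S_F)$ satisfies $\alpha_{X_F} = \sum_{E \subset X_F \setminus S_F} \ord_E(\alpha_{X_F}) E$. The quantity $\ord_E(\alpha)$ does not depend on the chosen completion where $E$ is defined: if $Y_F$ is above $X_F$ and $E$ appears in both, then by the pushforward description $\ord_E((\pi_{YX})_* \alpha_{Y_F}) = \ord_E(\alpha_{Y_F})$, and compatibility gives the equality. Injectivity is then immediate: if $\ord_E(\alpha) = 0$ for every $E \in \cD_\infty$, then $\alpha_{X_F} = 0$ in every completion.

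For surjectivity, given a family $(a_E)_{E \in \cD_\infty} \in \R^{\cD_\infty}$, define for each completion $X_F$
\begin{equation}
  \alpha_{X_F} := \sum_{E \subset X_F \setminus S_F} a_E \, E \ \in \ \DivInf(X_F)_\R.
\end{equation}
The compatibility $(\pi_{YX})_* \alpha_{Y_F} = \alpha_{X_F}$ for any morphism $\pi_{YX} : Y_F \to X_F$ of completions follows directly from the description of the pushforward recalled above: the strict transforms of prime divisors of $X_F$ contribute their original coefficient, and the exceptional components disappear. Hence $(\alpha_{X_F})_{X_F}$ defines an element of $\Winf(S_F)$ whose image under the stated map is $(a_E)_{E \in \cD_\infty}$.

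Finally I would verify the bicontinuity. By definition, the weak topology on $\Winf(S_F)$ is the coarsest topology making every projection $\alpha \mapsto \alpha_{X_F}$ continuous, and the product topology on $\R^{\cD_\infty}$ is the coarsest one making each coordinate projection continuous. Since $\ord_E$ is a continuous linear form on the finite-dimensional space $\DivInf(X_F)_\R$ for any $X_F$ where $E$ is defined, the map $\alpha \mapsto \ord_E(\alpha)$ is continuous, giving continuity in one direction. Conversely, for any completion $X_F$, only the finitely many coordinates indexed by prime divisors at infinity of $X_F$ enter in $\alpha_{X_F}$, and the map $(a_E) \mapsto \sum_{E \subset X_F \setminus S_F} a_E E$ is continuous from $\R^{\cD_\infty}$ to $\DivInf(X_F)_\R$; by the universal property of the inverse limit this assembles into a continuous inverse. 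No step is a serious obstacle here; the only mild subtlety is the bookkeeping that strict transforms and exceptional components cover all prime divisors at infinity of $Y_F$ under a morphism of completions, which is exactly what makes the compatibility check in the surjectivity step go through.
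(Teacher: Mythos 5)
Your proof is correct. The paper does not give its own argument here — it simply cites Lemma 1.5 of Boucksom–Favre–Jonsson — so there is nothing to compare against, but your argument supplies exactly what that reference would: an explicit inverse built via the pushforward description of prime divisors under morphisms of completions, together with a routine check that both directions are continuous by the universal properties of the inverse-limit and product topologies. The key bookkeeping observation, that the prime divisors at infinity of $Y_F$ split into strict transforms (which push forward to their images) and $\pi_{YX}$-exceptional components (which push forward to zero), is the right one, and it makes both the compatibility condition for surjectivity and the well-definedness of $\ord_E$ across completions immediate.
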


\begin{rmq}
  In \cite{boucksomDegreeGrowthMeromorphic2008} or \cite{cantatNormalSubgroupsCremona2013}, the Picard-Manin space is
  defined by allowing blow up with arbitrary centers not only at infinity. Since we study dynamics of automorphism of
  $S_F$ the indeterminacy points are only at infinity. This justifies our restricted definition of the Picard-Manin
  space. A similar construction is used in \cite{favreDynamicalCompactificationsMathbf2011} for the affine plane.
\end{rmq}

\subsection{Spectral property of the dynamical degree}
If $f \in \Aut (S_F)$ we define the operator $f^*$ on $\cNS(S_F)$ as follows. Let $\alpha \in \cNS(S_F)$
defined in a completion $X_F$. Let $Y_F$ be a completion of $S_F$ such that the lift $F: Y_F \rightarrow X_F$ of $f$ is
regular. We define $f^* \alpha$ as the Cartier class defined by $F^* \alpha$. This does not depend on the choice of $X_F$
or $Y_F$. We write $f_*$ for $(f^{-1})^*$. If $X_F$ is a completion of $S_F$, we write $f_X^* : \DivInf(S_F) \rightarrow
\DivInf(S_F)$ for the following operator:
\begin{equation}
  f_X^* (D) = (f^* D)_X
  \label{<+label+>}
\end{equation}
where we consider the class of $D$ and $f^*D$ in $\Cinf (S_F)$. We also define the operator $f_X^* : \NS(X_F) \rightarrow
\NS(X_F)$ in a similar way.
\begin{prop}[Proposition 2.3 and Theorem 3.2 of \cite{boucksomDegreeGrowthMeromorphic2008}]
  The operator $f^*$ extends to a continuous operator $f^* : \wNS (S_F) \rightarrow
  \wNS (S_F)$.
\end{prop}
If $\lambda_1 (f) > 1$, then $\lambda_1$ is simple and there is a spectral gap property.
\begin{thm}[Theorem 3.5 of \cite{boucksomDegreeGrowthMeromorphic2008} and Theorem 3.28 of
  \cite{abboudDynamicsEndomorphismsAffine2023}]\label{thm:dynamic-automorphism-picard-manin}
  Let $f$ be a loxodromic automorphism of $S_F$, there exist nef elements $\theta^+, \theta^- \in \overline \Cinf
  (S_F)$ unique up to renormalisation such that
  \begin{enumerate}
  \item $\theta^+$ and $\theta^-$ are effective.
  \item $(\theta^+)^2 = (\theta^-)^2 = 0, \theta^+ \cdot \theta^- = 1$.
  \item $f^* \theta^+ = \lambda_1 \theta^+, (f^{-1})^* \theta^- = \lambda_1 \theta^-$
    \label{<+label+>}
  \end{enumerate}
\end{thm}

\subsection{Compatibility with adelic divisors}\label{sub:sec-compatibility-adelic-divisor}
Let $\K$ be either a number field or any field over which $F$ is finitely generated. Let $B_\K$ be either $\OO_\K$ in
the arithmetic setting or $\K$ in the geometric setting. Recall that in the definition of
$\hat \Div_\infty (S_F / B_\K)$, we impose that if $\overline D \in \hat \Div_\infty(S_F / B_\K)$, then $D_{|S_F} = 0$.
We have a forgetful group homomorphism
\begin{equation}
c: \hat \Div_\infty(S_F / B_\K)_\mod \rightarrow \Winf(S_F)
  \label{<+label+>}
\end{equation}
defined as follows. Let $\cU$ be a quasiprojective model of $S_F$ over $B_\K$ and let $\overline \sD$ be a model adelic
divisor on $\cU$. Then, $c (\overline \sD) = \sD_{F}$ is the restriction of the horizontal part of $\sD$ to $S_F$.

\begin{prop}
  The group homomorphism $c$ extends to a continuous group homomorphism
  \begin{equation}
    c : \hat \Div_\infty(S_F / B_\K ) \rightarrow \Winf (S_F).
    \label{<+label+>}
  \end{equation}
  Furthermore, if $\overline D$ is integrable then $c (\overline D) \in \overline \Cinf (S_F)$.
\end{prop}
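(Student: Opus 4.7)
The plan is to proceed in two steps following the structure of the statement: first extend $c$ by a density argument, then upgrade to $\overline \Cinf(S_F)$ for integrable divisors using the intersection form.

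\textbf{Step 1: continuity of $c$ and extension.}
The model map sends $\overline \cD \in \hat \Div_\infty(S_F / B_\K)_\mod$ to the class $\cD_F \in \DivInf(X_F) \subset \Winf(S_F)$, where $\cD_F$ is the restriction to the generic fiber (necessarily supported at infinity since $\cD_{|S_F} = 0$). By Lemma \ref{lemme:weak-topology}, $\Winf(S_F)$ is topologized by the family of coordinates $\ord_E$ for $E \in \cD_\infty$. If $\overline \cD \in \hat \Div_\infty(S_F / B_\K)_\mod$ satisfies $-\epsilon \overline \sD_0 \leq \overline \cD \leq \epsilon \overline \sD_0$ for a boundary divisor $\overline \sD_0 = (\sD_0, g_0)$, then the underlying inequality $-\epsilon \sD_0 \leq \cD \leq \epsilon \sD_0$ restricts to horizontal parts, so for every $E \in \cD_\infty$, $|\ord_E c(\overline \cD)| \leq \epsilon \ord_E(\sD_{0,F})$. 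This gives continuity of $c$ at $0$, hence everywhere by linearity, and a continuous extension $c : \hat \Div_\infty(S_F / B_\K) \rightarrow \Winf(S_F)$ because $\Winf(S_F)$ is complete for the product topology.

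\textbf{Step 2: integrable case.}
Since integrable means the difference of two strongly nef divisors, it suffices to treat the strongly nef case. Let $\overline D = \lim \overline \cD_i$ be a Cauchy sequence with each $\overline \cD_i$ a semipositive model adelic divisor; then each $\alpha_i := c(\overline \cD_i) = \cD_{i,F}$ is nef on its completion and supported at infinity, and the Cauchy condition reads $|\alpha_j - \alpha_i| \leq \epsilon_i \cB_F$ (as Weil divisors at infinity) for $j \geq i$, where $\cB_F$ is the horizontal restriction of the boundary divisor. I will show $(\alpha_i)$ is Cauchy in the strong topology of $\overline \cNS(S_F)$; since each $\alpha_i \in \Cinf(S_F)$, the limit lies in $\overline \Cinf(S_F)$.

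\textbf{Step 3: the Cauchy estimate.}
Fix an ample class $\omega$ on some completion with $\omega^2 = 1$, and set $\beta = \alpha_j - \alpha_i$. Using $\|\beta\|_\omega^2 = 2(\beta \cdot \omega)^2 - \beta^2$, I bound each piece. The first piece is easy: since $|\beta| \leq \epsilon_i \cB_F$ and $\omega$ is nef, $|\beta \cdot \omega| \leq \epsilon_i \cB_F \cdot \omega \to 0$. For the second, write
\begin{equation*}
  \beta^2 = \alpha_j \cdot \beta - \alpha_i \cdot \beta.
\end{equation*}
Because $\alpha_i, \alpha_j$ are nef and $\pm \beta \leq \epsilon_i \cB_F$, one has $|\alpha_k \cdot \beta| \leq \epsilon_i (\alpha_k \cdot \cB_F)$ for $k = i,j$. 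It remains to bound $\alpha_k \cdot \cB_F$ uniformly. Decomposing $\cB_F = a\omega + \cB_F'$ with $\cB_F' \perp \omega$, the Hodge index theorem gives $\alpha_k^2 \leq (\alpha_k \cdot \omega)^2$; combined with Cauchy--Schwarz in the negative definite $\omega^\perp$ and the uniform boundedness of $\alpha_k \cdot \omega$ (itself a consequence of the Cauchy bound against any fixed $\alpha_0$), this yields a uniform constant $C$ with $|\alpha_k \cdot \cB_F| \leq C$. Hence $|\beta^2| \leq 2\epsilon_i C$ and therefore $\|\beta\|_\omega^2 \leq 2\epsilon_i^2(\cB_F \cdot \omega)^2 + 2 \epsilon_i C \to 0$, establishing the Cauchy property.

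\textbf{Main obstacle.} The only non-formal point is the uniform bound on $\alpha_i \cdot \cB_F$: the boundary divisor $\cB_F$ is effective but not nef, so its pairing with the moving nef classes $\alpha_i$ is not automatically controlled. The Hodge index trick described above is what makes the argument go through, and is the reason the nef hypothesis (built into ``strongly nef'') is used in an essential way.
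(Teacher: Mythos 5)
Your Step 1 matches the paper's argument: read off the $\ord_E$ coordinates from the Cauchy estimate $-\epsilon_i\overline\sD_0 \le \overline\cD_j - \overline\cD_i \le \epsilon_i\overline\sD_0$ to get convergence in the weak (product) topology of $\Winf(S_F)$, via Lemma \ref{lemme:weak-topology}.

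For the integrable case you diverge from the paper, which is much terser: it simply observes that for a strongly nef $\overline D$, the class $c(\overline D)$ is a nef element of $\wNS(S_F)$, and then cites Proposition 1.4 of \cite{boucksomDegreeGrowthMeromorphic2008}, which states that nef Weil classes automatically lie in the $L^2$-completion $\overline\cNS(S_F)$; combined with the fact that $c(\overline D)$ is supported at infinity, this places it in $\overline\Cinf(S_F)$. What you do instead is in effect reprove that black box in this setting: you show the sequence $\alpha_i = c(\overline\cD_i)$ is Cauchy in the $\|\cdot\|_\omega$ norm by controlling $(\alpha_j-\alpha_i)\cdot\omega$ directly and $(\alpha_j-\alpha_i)^2$ via the pairing with the nef $\alpha_k$, using effectivity of $\pm(\alpha_j-\alpha_i)+\epsilon_i\cB_F$, and then bound $\alpha_k\cdot\cB_F$ uniformly with the Hodge index inequality $\alpha_k^2\le(\alpha_k\cdot\omega)^2$ plus Cauchy–Schwarz on $\omega^\perp$. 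This is correct, and it is genuinely more self-contained: you avoid the BFJ citation at the cost of a longer argument. The paper's route is shorter but relies on an external structural result about the Picard–Manin space; yours makes the $L^2$ estimate explicit and shows where nefness and effectivity of the boundary are each used. Both establish the same conclusion.

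One small point worth keeping in mind (it doesn't invalidate your argument but is implicit in it): in bounding $\alpha_k\cdot\cB_F$ you are pairing classes in $\cNS(S_F)$, and your $\alpha_k$ live on varying completions; you should note that the intersection form on $\cNS(S_F)$ is defined on the direct limit precisely so that these pairings are computed on any common completion, and that this is compatible with the projection formula used throughout the section.
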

\begin{proof}
  Let $\overline D \in \hat \Div_\infty (S_F /B_\K)$ be given by a Cauchy sequence of model adelic divisors $(\overline
  \sD_i)$. Let $X_F$ be a completion of $S_F$. There exists a sequence $\epsilon_i$ converging to zero such that
  \begin{equation}
  - \epsilon_i \overline \sD_0 \leq \overline \sD_j - \overline \sD_i \leq \epsilon_i \overline \sD_0
    \label{<+label+>}
  \end{equation}
  Applying $c$, we get (write $D_j = c(\overline \sD_j)$)
  \begin{equation}
    -\epsilon_i D_0 \leq D_j - D_i \leq \epsilon_i D_0
    \label{<+label+>}
  \end{equation}
  Thus, for every $E \in \sD_\infty, \ord_E (D_i)$ is a Cauchy sequence and converges to a number $\ord_E (D)$. By Lemma
  \ref{lemme:weak-topology} this defines a Weil divisor $c (D) \in \Winf (S_F)$. It is clear that $c$ is continuous, again
  using Lemma \ref{lemme:weak-topology}.

  If $\overline D$ is integrable, then it is the difference of two strongly nef adelic divisors and nef classes in $\Winf
  (S_F)$ belong to $\Cinf (S_F)$, therefore $c(\overline D) \in \overline \Cinf$.
\end{proof}
We will drop the notation $c(\overline D)$ and just write $D = c(\overline D)$.

\section{Dynamics of a loxodromic automorphism}
Here we state some main results from \cite{abboudDynamicsEndomorphismsAffine2023} where dynamics of endomorphisms of
affine surfaces was studied using valuative techniques. Regarding loxodromic automorphisms of affine varieties the main
results from loc. cit. is that a loxodromic automorphism on a normal affine surface has a Henon-like dynamics at
infinity. More precisely, we state the following result.
\begin{prop}[Theorem 14.18 and Theorem 14.4 of
  \cite{abboudDynamicsEndomorphismsAffine2023}]\label{prop:dynamics-loxodromic-automorphism}
  Let $F$ be a field and $S_F$ a normal affine surface over $F$ with $\QAlb(S_F) = 0$. Let $f \in \Aut (S_F)$ be a loxodromic
  automorphism. There exists a completion $X_F$ of $S_F$ and closed points $p_{-}, p_{+} \in (X_F \setminus S_F)(F)$ such that
  \begin{enumerate}
      \item $p_+ \neq p_-$
      \item There exists $N_0 \geq 1$ such that for all $N \geq N_0, f^{\pm N}$ contracts $X_F \setminus S_F$ to $p_\pm$.
    \item $f^\pm$ is defined at $p_{\pm}, f^{\pm 1} (p_\pm) = p_\pm$ and $p_\mp$ is the unique
      indeterminacy point of $f^{\pm N}$ for $N$ large enough.
  \item If $\lambda_{1}(f) \not \in \Z$, there exists local algebraic coordinates $(z,w)$ at $p_\pm$ such that $zw = 0$
    is a local equation of the boundary and $f^{\pm 1}$ is of the form
    \begin{equation}
      f^{\pm 1} (z,w) = \left( z^a w^b \phi, z^c w^d \psi \right)
      \label{eq:forme-normale-monomiale}
    \end{equation}
    with $ad - bc = \pm 1$ and $\phi, \psi$ invertible.
    \item If $\lambda_{1}(f) \in \Z$, then there exists local coordinates $(z,w)$ at $p_{\pm}$ such that $z = 0$ is a
      local equation of the boundary and $f^{\pm}$ is of the form
          \begin{equation}
            f^{\pm}(z,w) = (z^{a} \phi, w^{c}z^d\psi).
            \label{eq:forme-normale-henon}
          \end{equation}
          with $a \geq 2, c,d \geq 1$, $\phi$ invertible, $\psi$ regular.
  \item $f$ is algebraically stable over $X_F$ and $f_X^* \theta_X^+ = \lambda_1 \theta_X^+, (f_X^{-1})^* \theta_X^- =
    \lambda_1 \theta_X^-$.
  \item $\Supp \theta_X^\pm = X_F \setminus S_F$.
\end{enumerate}
    Furthermore, the subset of completions of $S_F$ satisfying all these properties is cofinal in the set of
    completions of $S_F$.
\end{prop}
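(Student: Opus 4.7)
The plan is to construct the completion $X_F$ iteratively, starting from an arbitrary completion $Y_F$ of $S_F$, using the spectral dynamics on the Picard--Manin space as a guide, and then to identify the points $p_\pm$ by a valuative analysis at infinity. First I would invoke Theorem \ref{thm:dynamic-automorphism-picard-manin} to produce the two nef, effective eigenclasses $\theta^\pm \in \overline \Cinf (S_F)$ with $f^* \theta^+ = \lambda_1 \theta^+$ and $(f^{-1})^*\theta^- = \lambda_1 \theta^-$. These classes are a priori only Weil classes; by a Diller--Favre style procedure of blowing up the orbits of indeterminacy of the iterates $f^{\pm n}$, I would refine $Y_F$ into a completion $X_F$ on which both $f$ and $f^{-1}$ are algebraically stable, on which $\theta_X^\pm$ are honest Cartier classes, and on which $\Supp \theta_X^\pm = X_F \setminus S_F$. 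This simultaneously produces items (6) and (7).

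Dually, the convergence statement of Theorem \ref{thm:dynamic-automorphism-picard-manin}(4) translates into the existence of two divisorial valuations $v_\pm$ of $F(S_F)$ at infinity, attracting for the action of $f^{\pm 1}$ respectively on the valuative tree, and paired with $\theta^\mp$ under the intersection form. After a further blow-up at infinity I can assume that the centers of $v_\pm$ on $X_F$ are distinct closed points $p_\pm$ on the boundary; they are distinct because $\theta^+ \ne \theta^-$. The geometric contraction statement (2) is then a translation of the attracting character of $v_\pm$: for every boundary component $E \in \cD_\infty$, $(f^{\pm n})_* \ord_E / \lambda_1^n \to v_\pm$, which forces the image $f^{\pm N}(E)$ to collapse to $p_\pm$ for $N \gg 0$. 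Applying the same picture symmetrically to $f$ and $f^{-1}$ gives item (3): the invariance $f^{\pm 1}(p_\pm) = p_\pm$ and the uniqueness of $p_\mp$ as the indeterminacy point of $f^{\pm N}$ follow because $p_\mp$ is the only locus whose blow-up history records the tangential deviation from the attracting direction of $v_\pm$.

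To obtain the local normal forms (4) and (5), I would analyze the germ of $f^{\pm 1}$ at $p_\pm$. By construction the boundary germ is either two smooth transverse branches or a single smooth branch, so local algebraic coordinates $(u,v)$ exist in which the boundary is $\{uv=0\}$ or $\{u=0\}$. The induced integral linear action of $f^{\pm 1}$ on the exponent lattice of monomials has spectral radius $\lambda_1(f)$, being conjugate to the restriction of $f_X^*$ to the $\mathbf R$-cone spanned by the boundary components at $p_\pm$. When $\lambda_1(f) \notin \Z$, the characteristic polynomial is irreducible over $\Q$, both branches are preserved, and one extracts the monomial form \eqref{eq:forme-normale-monomiale} with $ad-bc = \pm 1$ from the unimodularity of $f^{\pm 1}$ on the lattice. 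When $\lambda_1(f) \in \Z$, the second eigenvalue is forced to be $\pm 1$, only one branch is preserved, and one recovers the Hénon-like form \eqref{eq:forme-normale-henon}. The main obstacle I anticipate is showing that the remainder terms $\phi, \psi$ are genuinely invertible \emph{regular} factors in \emph{algebraic} local coordinates rather than merely formal ones; this requires an inductive blow-up argument refining $X_F$ at $p_\pm$ until the higher-order corrections are absorbed, and is precisely the content of Theorem 14.18 of \cite{abboudDynamicsEndomorphismsAffine2023}. Cofinality of such completions is then automatic: any further blow-up at infinity preserves all of (1)--(7) once $p_\pm$ are traced through the induced birational map.
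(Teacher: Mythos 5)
The proposition you are trying to prove is not proved in this paper at all: it is stated as a citation of Theorem~14.18 and Theorem~14.4 of \cite{abboudDynamicsEndomorphismsAffine2023}, the author's prior work, so there is no in-paper proof to compare against. You in fact recognize this — you defer the decisive step (invertibility of $\phi,\psi$ in honest algebraic coordinates) to ``precisely the content of Theorem 14.18''. As a blind reconstruction of the cited argument your outline captures the correct high-level strategy (Picard--Manin eigenclasses, stabilization by blow-ups, a valuative analysis of contraction at infinity, local normal forms at the attracting point), but it has a few genuine gaps worth naming.

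First, the passage from the eigenclasses $\theta^\pm \in \overline\Cinf(S_F)$ of Theorem~\ref{thm:dynamic-automorphism-picard-manin} to a pair of ``divisorial valuations $v_\pm$ attracting for $f^{\pm 1}$'' is not a translation; it is a separate piece of machinery. The classes $\theta^\pm$ live in the completed Néron--Severi space and are not a priori concentrated at a single valuation; the eigenvaluations $v_\pm$ arise from a Favre--Jonsson-style analysis of $f^*$ acting on the valuative tree at infinity, and their existence, uniqueness, and attracting character require their own argument. You cannot simply read $v_\pm$ off the intersection pairing with $\theta^\mp$. Relatedly, you assert without argument that after a blow-up the centers of $v_\pm$ are closed points; this excludes the a priori possibilities that an eigenvaluation is itself divisorial (center a curve) or has no finite-dimensional center, which is exactly one of the dichotomies the cited theorem resolves when separating the $\lambda_1\in\Z$ and $\lambda_1\notin\Z$ cases. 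Second, for item~(4) your claim that ``the induced integral linear action of $f^{\pm 1}$ on the exponent lattice of monomials has spectral radius $\lambda_1(f)$'' presupposes that the germ at $p_\pm$ is already monomial modulo units in suitable algebraic coordinates, which is the statement being proved rather than something available to quote. Third, the closing assertion that cofinality of such completions ``is then automatic'' is too quick: blowing up $p_\pm$ itself relocates the attracting/repelling point and the boundary-branch configuration changes (a node can become a smooth point or acquire a new crossing), so one must re-verify the normal form (4)--(5) at the new center; this is handled in the source but is not a formality.

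In short: the correct referee action for this statement is to check it against \cite{abboudDynamicsEndomorphismsAffine2023}, Theorems~14.4 and 14.18, rather than to re-prove it here. Your sketch would need the valuative-tree input and the blow-up book-keeping made precise before it could stand on its own.
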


\begin{cor}[Corollary 3.4 from \cite{cantatBersHenonPainleve2009}]\label{cor:no-invariant-curves}
  Suppose $\QAlb (S_F) = 0$, then any loxodromic automorphism of $S_F$ does not admit any invariant algebraic curves.
\end{cor}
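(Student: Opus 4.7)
My plan is to argue by contradiction using the dynamical eigenclasses $\theta^\pm$ from Theorem \ref{thm:dynamic-automorphism-picard-manin} together with the Hénon-type completion furnished by Proposition \ref{prop:dynamics-loxodromic-automorphism}. Suppose $C\subset S_F$ is an irreducible algebraic curve with $f(C)=C$; any reducible or periodic case reduces to this by replacing $f$ by a suitable iterate $f^M$, which is again loxodromic of dynamical degree $\lambda_1^M>1$. Fix a completion $X_F$ of $S_F$ satisfying all the conclusions of Proposition \ref{prop:dynamics-loxodromic-automorphism}, and let $\overline C\subset X_F$ denote the Zariski closure of $C$. Since $\overline C$ is projective and $C\subset S_F$ is affine, $\overline C$ is not contained in $S_F$, so $\overline C\cap(X_F\setminus S_F)$ is a non-empty finite set of points.

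The key algebraic observation is that, because $f$ is algebraically stable on $X_F$ (item (6) of Proposition \ref{prop:dynamics-loxodromic-automorphism}) and $f(C)=C$, the divisor-theoretic pullback decomposes as
\begin{equation}
(f_X^{N})^{*}\overline C=\overline C+\xi_N \quad\text{in } \NS(X_F),
\end{equation}
where $\xi_N$ is an effective divisor supported on $X_F\setminus S_F$. Indeed, resolving the indeterminacies of $f^N$ by $\pi\colon Y\to X_F$ with $F=f^N\circ\pi$ regular, the strict transform of $\overline C$ by $F$ is again $\overline C$ (since $f^{-N}(C)=C$), so $(f^N_X)^*\overline C=\pi_*F^*\overline C=\overline C+(\text{exceptional contributions pushed down})$, and the residual part is boundary and effective. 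This also follows by induction from $f^*[\overline C]=[\overline C]+\xi_1$.

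Now I use Theorem \ref{thm:dynamic-automorphism-picard-manin} and the isometry property of $f^*$ on $\overline{\cNS}(S_F)$ to compute the pairing with $\theta^+$. Since $f^*\theta^+=\lambda_1\theta^+$ and $f_*f^*=\id$ on $\overline\cNS(S_F)$, one has $f_*\theta^+=\lambda_1^{-1}\theta^+$, whence
\begin{equation}
(f^N)^{*}[\overline C]\cdot\theta^{+}=[\overline C]\cdot(f_*)^N\theta^{+}=\lambda_1^{-N}\,[\overline C]\cdot\theta^{+}.
\end{equation}
Consequently $\xi_N\cdot\theta^+=(\lambda_1^{-N}-1)\,[\overline C]\cdot\theta^+$. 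Via the pairing $\cNS\times\wNS\to\R$, this intersection equals $\xi_{N,X}\cdot\theta^+_X$ in $\NS(X_F)$. Since $\xi_{N,X}$ is effective and $\theta_X^+$ is nef, this number is $\geq0$; as $\lambda_1^{-N}-1<0$, we deduce $[\overline C]\cdot\theta^+\leq 0$. On the other hand, by item (7) of Proposition \ref{prop:dynamics-loxodromic-automorphism} together with the construction of $\theta^+_X$ as a Perron–Frobenius eigenvector in the cone of nef effective boundary divisors (see the proof of Theorem \ref{thm:dynamic-automorphism-picard-manin}), we may write $\theta_X^+=\sum_i a_i E_i$ with $a_i>0$ for every boundary component $E_i$. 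Since $\overline C$ is an irreducible horizontal curve that meets some $E_j$, we have $\overline C\cdot E_j\geq 1$, hence $\overline C\cdot\theta_X^+\geq a_j>0$, a contradiction.

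The delicate points I expect to have to handle carefully are, first, making the induction giving $\xi_N\geq 0$ genuinely rigorous (it uses algebraic stability in an essential way, which is why we must pass to the completion of Proposition \ref{prop:dynamics-loxodromic-automorphism}), and, second, the strict positivity $\overline C\cdot\theta_X^+>0$: this relies on the fact that $\theta_X^+$ is \emph{effective} with \emph{full} boundary support, which is exactly the content of the Perron–Frobenius construction in Theorem \ref{thm:dynamic-automorphism-picard-manin} combined with item (7) of Proposition \ref{prop:dynamics-loxodromic-automorphism}. Once both are in place, the numerical contradiction is immediate.
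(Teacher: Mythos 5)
Your route is genuinely different from the paper's. The paper's proof is one line of local dynamics: $\overline C$ meets the boundary (it is projective while $C\subset S_F$ is affine), the boundary is contracted to $p_+$ for large iterates by Proposition \ref{prop:dynamics-loxodromic-automorphism}, hence $p_+\in\overline C$, and then $f|_{\overline C}$ would be an automorphism of a projective curve with a superattractive fixed point at $p_+$ --- impossible. Your proposal bypasses the boundary contraction in favor of a Picard--Manin intersection computation.

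Unfortunately there is a genuine gap at the identity $\xi_N\cdot\theta^+=(\lambda_1^{-N}-1)\,[\overline C]\cdot\theta^+$. The left-hand side pairs the Cartier class $\xi_N\in\NS(X_F)\hookrightarrow\cNS(S_F)$, which is Cartier on $X_F$, so the $\cNS\times\wNS$ pairing evaluates on $X_F$: $\xi_N\cdot\theta^+=\xi_N\cdot\theta^+_{X_F}$. The right-hand side, however, comes from $(f^N)^*[\overline C]\in\cNS(S_F)$, which is Cartier only on a blowup $\pi\colon Y\to X_F$ resolving the indeterminacy at $p_-$; there the pairing evaluates on $Y$: $(f^N)^*[\overline C]\cdot\theta^+=\bigl(\tilde f^{N*}[\overline C]\bigr)\cdot\theta^+_{Y}$. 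The two Cartier classes $(f^N)^*[\overline C]$ and $\pi^*\bigl([\overline C]+\xi_N\bigr)$ have the same incarnation in $\NS(X_F)$, but differ in $\NS(Y)$ by a $\pi$-exceptional class $\delta_N:=\tilde f^{N*}[\overline C]-\pi^*\pi_*\tilde f^{N*}[\overline C]$. Since $\theta^+_Y\neq\pi^*\theta^+_{X_F}$ in general (they differ by an anti-effective exceptional class, $\theta^+$ being nef but not Cartier on $X_F$), the two intersection numbers do not agree. The correct identity is
\begin{equation}
(\lambda_1^{-N}-1)\,[\overline C]\cdot\theta^+ \;=\; \xi_N\cdot\theta^+_{X_F} \;+\; \delta_N\cdot\theta^+_{Y},
\end{equation}
and a standard negative-definiteness argument on the $\pi$-exceptional intersection matrix shows that $\delta_N$ is anti-effective (its pushforward to $X_F$ vanishes and $\tilde f^{N*}[\overline C]$ is effective with no $\pi$-exceptional component), hence $\delta_N\cdot\theta^+_Y\leq 0$. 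This is entirely compatible with $\xi_N\cdot\theta^+_{X_F}\geq 0$ and with $[\overline C]\cdot\theta^+>0$: no numerical contradiction follows.

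Note also that $\delta_N\neq 0$ precisely because $\overline C$ passes through the indeterminacy point $p_-$, so that the strict transform of $\overline C$ meets the fiber $\pi^{-1}(p_-)$. That is exactly the geometric fact the paper's argument extracts and uses head-on. A purely numerical Picard--Manin argument cannot suffice here, since loxodromic automorphisms of \emph{projective} surfaces do admit invariant curves; it is the affine boundary dynamics (contraction to $p_\pm$, Hénon-type normal form) that forbids them, and you must use it directly.
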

\begin{proof}
  Let $f \in \Aut(S_F)$ be loxodromic. Let $X_F$ be a completion of $S_F$ given by Proposition
  \ref{prop:dynamics-loxodromic-automorphism}.
  If $C \subset S_F$ was an invariant algebraic curve, then its closure $\overline C$ in $X_F$ should intersect
  $X_F \setminus S_F$. Since the boundary is contracted by $f$, we must have $p_+ \in \overline C$ and $f : \overline C
  \rightarrow C$ is an automorphism with a superattractive fixed point. This is a contradiction.
\end{proof}

\begin{cor}\label{cor:periodic-points-are-algebraic}
  Let $S_F$ be a normal affine surface over a field $F$ and let $f \in \Aut(S_F)$ be a loxodromic automorphism. For any field
  extension $F \hookrightarrow L$,
  \begin{equation}
    \Per (f_L) = \Per(f)
    \label{eq:<+label+>}
  \end{equation}
  i.e every periodic point of $f$ is defined over the algebraic closure of $F$.
\end{cor}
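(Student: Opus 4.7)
The plan is to reduce the statement to showing that for every $n \geq 1$, the fixed-point subscheme $\Fix(f^n) \subset S_F$ is zero-dimensional. Once this is granted, the base change $\Fix(f^n)_L$ to any field extension $F \hookrightarrow L$ remains zero-dimensional, and its closed points have residue fields that are finite over $F$. Consequently every $\overline L$-point of $\Fix(f^n)_L$ arises from an $\overline F$-point of $\Fix(f^n)$ via some chosen embedding $\overline F \hookrightarrow \overline L$, which gives $\Per(f_L) \subset \Per(f)$; the reverse inclusion is automatic. Since Krull dimension is invariant under base change to a field, I may assume $F = \overline F$ is algebraically closed.

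Now I split the proof along the dichotomy established in Section \ref{sec:algebraic-torus}: either $\QAlb(S_F) = 0$ or $S_F \simeq \G_m^2$.

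In the first case, suppose for contradiction that some irreducible component $C$ of $\Fix(f^n)$ were positive-dimensional, hence an irreducible curve in $S_F$. Because $f$ commutes with $f^n$, the automorphism $f$ stabilizes $\Fix(f^n)$ and permutes its finitely many irreducible components (finiteness holds by noetherianity of $S_F$). Some iterate $f^{kn}$ therefore satisfies $f^{kn}(C) = C$. Since $\lambda_1(f^{kn}) = \lambda_1(f)^{kn} > 1$, the automorphism $f^{kn}$ is itself loxodromic, and the existence of the invariant curve $C$ contradicts Corollary \ref{cor:no-invariant-curves} applied to $f^{kn}$.

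In the torus case, after conjugating by a translation I may assume $f = f_A$ for some matrix
\[
A = \begin{pmatrix} a & b \\ c & d \end{pmatrix} \in \GL_2(\Z)
\]
of spectral radius $\lambda_1(f) > 1$. The eigenvalues of $A^n$ are the $n$-th powers of those of $A$; since $\det A = \pm 1$ and $\lambda_1(f) > 1$, neither eigenvalue of $A^n$ equals $1$, so $A^n - \mathrm{Id}$ is invertible over $\Q$. The fixed-point equations for $f^n$ on $\G_m^2$ reduce to two monomial equations in $(x,y)$ whose matrix of exponents is $A^n - \mathrm{Id}$, and this system admits only finitely many solutions, all of them torsion translates of roots of unity in $\G_m^2(\overline F)$. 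The main difficulty is really concentrated in the first case, since it requires the full structural information of Proposition \ref{prop:dynamics-loxodromic-automorphism} (through Corollary \ref{cor:no-invariant-curves}); the torus case is a direct linear-algebra computation.
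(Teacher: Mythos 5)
Your proof is correct, and it relies on the same core ingredients as the paper: the dichotomy $S_F \simeq \G_m^2$ vs.\ $\QAlb(S_F) = 0$, and in the latter case the absence of $f$-invariant curves (Corollary \ref{cor:no-invariant-curves}). What differs is the packaging. The paper works directly with a hypothetical non-algebraic fixed point $p \in \Per(f_L) \setminus \Per(f)$ and observes that its orbit under $\Gal(L / \overline F \cap L)$ is infinite, so its Zariski closure is a positive-dimensional invariant subvariety, giving a contradiction either with $f_L \neq \id$ or with Corollary \ref{cor:no-invariant-curves}. You instead prove up front that the fixed-point scheme $\Fix(f^n) \subset S_F$ is zero-dimensional over $F$ for every $n$, from which $\Fix(f^n)(\overline L) = \Fix(f^n)(\overline F)$ follows tautologically since a zero-dimensional finite-type $F$-scheme has residue fields algebraic over $F$; zero-dimensionality is itself deduced by noting that $f$ permutes the finitely many irreducible components of $\Fix(f^n)$, so a curve component would be $f^k$-invariant for some $k \geq 1$ and contradict Corollary \ref{cor:no-invariant-curves} applied to the loxodromic map $f^k$. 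Your scheme-theoretic framing is arguably cleaner, sidestepping the somewhat informal treatment of "$\Gal(L / \overline F \cap L)$" (which need not be a Galois extension when $L$ is transcendental over $\overline F \cap L$); the paper's argument is shorter but relies on the reader filling in why the orbit under field automorphisms fixing $\overline F$ is infinite. Both arguments carry the same mathematical content, and your torus case (invertibility of $A^n - \mathrm{Id}$ since the eigenvalues of $A^n$ are $\lambda_1^{\pm n} \neq 1$) agrees with the description $\Per(f_A) = \mathbb U \times \mathbb U$ used in the paper.
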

\begin{proof}
  If $S_F = \G_m^2$, then up to translation by an $\overline F$-point $\Per (f) = \mathbb U \times \mathbb U$ and any
  periodic point of $f$ is defined over the algebraic closure of the prime field of $F$ so the result is trivial.

  Suppose that $\QAlb(S_F) = 0$ and that there exists $p \in \Per(f_L)$ not defined over $\overline F$ (in
    particular, $L$ is not algebraic over $F$). We write $f_L$ for the base change of $f$ over $L$. We can suppose up to
    replacing $f$ by an iterate that $p$ is a fixed point. Then, the Galois orbit $\Gal (L / \overline F \cap L) \cdot
    p$ of $p$ defines an infinite number of fixed point of $f_L$ over $S_F \times_{\spec F} \spec L$, its Zariski
    closure in $S_F \times_{\spec F} \spec L$ is either of dimension 1 or 2. In both cases this contradicts the fact
    that $f_L$ is a loxodromic automorphism, either because $f_L \neq \id$ or by Corollary
    \ref{cor:no-invariant-curves}.
\end{proof}

\begin{prop}\label{prop:operateur-pull-back}
  Let $X_F$ be a completion of $S_F$ given by Proposition \ref{prop:dynamics-loxodromic-automorphism}, replace $f$ by one of
  its iterates such that $f^{\pm 1}$ contracts $X_F \setminus S_F$ to $p_\pm$. Then,
  \begin{enumerate}
    \item For all $R \in \DivInf({X_F})_\R$ such that $p_{+} \not \in \Supp R, f_{X_F}^* R = 0$ and $\theta^- \cdot R = 0$.
    \item If $\lambda_{1}(f) \in \Z$, then
      $\left\{\theta_{{X_F}}^{+} \right\} \cup  \left\{E : p_{+} \not \in \Supp E \right\}$ is a basis of $\DivInf({X_F})_{\R}$.
    \item If $\lambda_{1}(f) \not \in \Z$, there exists $D^- \in \DivInf({X_F})_\R$
          such that
          \begin{enumerate}
            \item $f_{X_F}^* D^- = \frac{1}{\lambda_1} D^-$
            \item $D^{-} \cdot \theta^{-} = 0$
            \item $\{\theta_{X_F}^+, D^-\} \cup \left\{ E : p_{+} \not \in \Supp E \right\}$
                  is a basis of $\DivInf({X_F})$
          \end{enumerate}
  \end{enumerate}
\end{prop}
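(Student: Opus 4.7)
All three items rest on one geometric fact combined with linear algebra. The geometric fact is that on the chosen completion $X_F$ the rational map $f$ contracts $(X_F \setminus S_F) \setminus \{p_-\}$ onto the single fixed point $p_+$ and has $p_-$ as its only indeterminacy point. The linear algebra then uses the spectral decomposition of $f^*$ on the Picard--Manin space $\overline \cNS (S_F)$ provided by Theorem~\ref{thm:dynamic-automorphism-picard-manin}. Item (1) is essentially the geometric fact translated into divisor language; items (2) and (3) reduce to a two-dimensional eigenvalue computation on a quotient of $\DivInf(X_F)_\R$.

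\textbf{Item (1).} Let $\pi \colon Y_F \to X_F$ be a resolution of the indeterminacy of $f$ at $p_-$ and write $F = f \circ \pi \colon Y_F \to X_F$ for the regular lift; the exceptional locus $\Exc(\pi)$ lies over $\{p_-\}$, and off $\Exc(\pi)$ the map $F$ coincides with $f \colon X_F \setminus \{p_-\} \to X_F$. Since $f(S_F) = S_F$ is disjoint from $\Supp R$, and $f((X_F \setminus S_F) \setminus \{p_-\}) = \{p_+\}$ with $p_+ \notin \Supp R$, we have $F^{-1}(\Supp R) \subset \Exc(\pi)$. Thus $F^*R$ is supported on $\Exc(\pi)$, and $f_X^*R = \pi_*(F^*R) = 0$ because $\pi$-exceptional components push forward to zero. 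For the intersection equality, applying the projection formula on $\overline \cNS (S_F)$ to $(f^{-1})^* \theta^- = \lambda_1 \theta^-$ gives
\[
\lambda_1\, \theta^- \cdot R \;=\; (f^{-1})^* \theta^- \cdot R \;=\; \theta^- \cdot f^* R,
\]
so iterating $N$ times and incarnating on $X_F$ yields $\theta^- \cdot R = \lambda_1^{-N}\, \theta_X^- \cdot (f^N)_X^* R$. By the algebraic stability of $f$ on $X_F$ (item (6) of Proposition~\ref{prop:dynamics-loxodromic-automorphism}), $(f^N)_X^* R = (f_X^*)^N R = 0$, so $\theta^- \cdot R = 0$.

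\textbf{Items (2) and (3).} Set $W := \mathrm{span}_\R \{E \in \cD_\infty(X_F) : p_+ \notin \Supp E\} \subset \DivInf(X_F)_\R$. By item (1) we have $f_X^*(W) = 0$, hence $f_X^*$ descends to an endomorphism $\bar f$ of $V := \DivInf(X_F)_\R / W$. By items (4)--(5) of Proposition~\ref{prop:dynamics-loxodromic-automorphism}, $\dim V$ equals the number of local branches of the boundary at $p_+$: $\dim V = 1$ when $\lambda_1 \in \Z$, and $\dim V = 2$ when $\lambda_1 \notin \Z$. Since $\Supp \theta_X^\pm = X_F \setminus S_F$ by item (7), the images $\bar\theta_X^\pm$ in $V$ are nonzero, and item (2) follows at once. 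For item (3), set $D^- := \theta_X^-$: condition (a) follows from $(f_X^{-1})^* \theta_X^- = \lambda_1 \theta_X^-$ together with the identity $f_X^* \circ (f_X^{-1})^* = \id$, a consequence of $f^* \circ (f^{-1})^* = \id$ on $\overline \cNS (S_F)$ (since $f \in \Aut(S_F)$) combined with $(f^*\alpha)_X = f_X^*\alpha$; condition (b) is $(\theta^-)^2 = 0$; and condition (c) holds because $\bar\theta_X^+$ and $\bar\theta_X^-$ are eigenvectors of $\bar f$ with distinct eigenvalues $\lambda_1$ and $\lambda_1^{-1}$ (as $\lambda_1 > 1$), hence they span the two-dimensional $V$, so $\{\theta_X^+, \theta_X^-\} \cup \{E : p_+ \notin \Supp E\}$ is a basis of $\DivInf(X_F)_\R$.

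\textbf{Main obstacle.} The only step with genuine geometric content is the vanishing $f_X^* R = 0$ in item (1); it crucially uses that $p_-$ is the unique indeterminacy point of $f$ on $X_F$ and that the entire boundary minus $\{p_-\}$ contracts to $p_+$, both of which are granted by the choice of completion in Proposition~\ref{prop:dynamics-loxodromic-automorphism}. Once this is in hand, items (2) and (3) amount to observing that on the small quotient $V$ the Picard--Manin eigenvectors $\theta_X^\pm$ descend faithfully and retain their original eigenvalues.
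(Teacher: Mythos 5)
Your treatment of $f_{X_F}^* R = 0$ in item (1) and of item (2) is correct and in substance agrees with the paper. The remaining parts have genuine gaps.

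\textbf{Item (1), the intersection $\theta^- \cdot R = 0$.} The iterated projection formula on $\overline \cNS(S_F)$ correctly gives $\theta^- \cdot R = \lambda_1^{-N}\,\theta^- \cdot (f^N)^* R$, but the step you call ``incarnating on $X_F$'' amounts to claiming $\theta^- \cdot (f^N)^* R = \theta_{X_F}^- \cdot (f^N)_{X_F}^* R$, and that is not automatic. The Cartier class $(f^N)^* R$ lives on a blow-up $\pi\colon Z_N \to X_F$, not on $X_F$, so the pairing is computed on $Z_N$; the vanishing of the incarnation $(f^N)_{X_F}^* R = \pi_*\bigl((f^N)^* R\bigr)$ only says the class on $Z_N$ is $\pi$-exceptional, and since $\theta_{Z_N}^- - \pi^*\theta_{X_F}^-$ is also $\pi$-exceptional the pairing on $Z_N$ need not vanish. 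In fact the iteration is just the identity $\theta^- \cdot (f^N)^* R = \lambda_1^N\,\theta^- \cdot R$, so to conclude from it you would already need the result. The paper obtains the vanishing in a single step entirely inside $\NS(X_F)_\R$,
\begin{equation*}
0 \;=\; f_{X_F}^* R \cdot \theta_{X_F}^- \;=\; R \cdot (f^{-1})_{X_F}^* \theta_{X_F}^- \;=\; \lambda_1\, R \cdot \theta^-,
\end{equation*}
an adjoint (projection-formula) identity valid for any birational self-map of $X_F$, with no iteration and no appeal to algebraic stability.

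\textbf{Item (3).} Taking $D^- = \theta_{X_F}^-$ fails. Your argument for (a) relies on $f_{X_F}^* \circ (f^{-1})_{X_F}^* = \id$, but taking $X_F$-incarnations does not commute with composing pull-backs of birational maps: by item (1) itself $f_{X_F}^*$ annihilates every boundary divisor avoiding $p_+$ and, symmetrically, $(f^{-1})_{X_F}^*$ annihilates those avoiding $p_-$, so the composite kills any divisor avoiding both points and is not the identity; hence $f_{X_F}^* \theta_{X_F}^-$ need not be $\lambda_1^{-1}\theta_{X_F}^-$. For (b), $D^- \cdot \theta^- = \theta_{X_F}^- \cdot \theta_{X_F}^-$ is a self-intersection in $\NS(X_F)_\R$, which is not $(\theta^-)^2 = 0$; these differ precisely because $\theta^-$ lies in $\overline\Cinf(S_F)$ but is not a Cartier class defined on $X_F$. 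With (a) and (b) gone, (c) cannot follow. The paper instead builds $D^-$ from the monomial normal form at $p_+$: with $A = \begin{pmatrix} a & c \\ b & d \end{pmatrix}$ the matrix of $f_{X_F}^*$ on the span of $E_+, F_+$ modulo divisors avoiding $p_+$, it takes the $\lambda_1^{-1}$-eigenvector $(\gamma,\delta)$ of $A$, sets $D^- = \gamma E_+ + \delta F_+ + \lambda_1 R$ for a suitable correction $R$ supported away from $p_+$, and then verifies (a) from item (1) and (b) from the same one-step adjoint identity displayed above.
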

\begin{proof}
  If $E$ is a prime divisor at infinity such that $p_{+} \not \in \Supp E$ then
  $f_{X_F}^* E = 0$ because every prime divisor at infinity is contracted to $p_+$ by
  $f$. Now if $R$ satisfies $f_{X_F}^* R = 0$, then
  \begin{equation}
    0 = f_{X_F}^* R \cdot \theta_{X_F}^- = R \cdot (f_{X_F}^{-1})^* \theta_{X_F}^- = \lambda_1 R \cdot \theta_{X_F}^-.
    \label{<+label+>}
  \end{equation}
  Thus $R \cdot \theta_{X_F}^- = 0$. This shows (1).

  If $\lambda_{1}(f) \in \Z$, then by Proposition
\ref{prop:dynamics-loxodromic-automorphism}, the family
$(\theta_{{X_F}}^{+}, E : p_{+} \not \in \Supp E)$ has length equal to
$\dim \DivInf({X_F})_{\R}$. So we just need to show that it is a free family.
Suppose there exists $t \in \R$ and $R \in \DivInf({X_F})$ such that
$p_{+} \not \in \Supp R$ that satisfy
\begin{equation}
  \label{eq:2}
  t \theta_{{X_F}}^{+} + R = 0.
\end{equation}
Applying $f_{{X_F}}^{*}$ to \eqref{eq:2}, we get $t = 0$. Thus, $R = 0$ and we get the result.

  If $\lambda_{1}(f) \not \in \Z$, then by Proposition \ref{prop:dynamics-loxodromic-automorphism} $p_{+} = E_{+} \cap F_{+}$ where $E_{+}, F_{+}$ are two prime divisors at infinity. Since $f_{X_F}^+ \theta_{X_F}^+ = \lambda_1 \theta_{X_F}^+$, we have that
  \begin{equation}
    \theta_{X_F}^+ = \alpha E_+ + \beta F_+ + \cdots
    \label{<+label+>}
  \end{equation}
  where $(\alpha, \beta)$ is an eigenvector of $A = \begin{pmatrix}
    a & c \\
    b & d
  \end{pmatrix}$ of eigenvalue $\lambda_1$. Now, the other eigenvalue of $A$ is $\frac{1}{\lambda_1}$ by Proposition
  \ref{prop:dynamics-loxodromic-automorphism} (4) (up to replacing $f$ by $f^2$), let
  $(\gamma, \delta)$ be an associated eigenvector, then
  \begin{equation}
    f_{X_F}^* (\gamma E_+ + \delta F_+) = \frac{1}{\lambda_1} (\gamma E_+ + \delta F_+) + R
    \label{<+label+>}
  \end{equation}
  where $R$ is a divisor at infinity which support does not contain $E_+$ or $F_+$. Set $D^- = \gamma E_+ + \delta F_+ +
  \lambda_1 R$, then by (1), $D^-$ satisfies $f_{X_F}^* D^- = \frac{1}{\lambda_1} D^-$. This shows (3)(a).

  Now,
  \begin{equation}
    \frac{1}{\lambda_1} D^- \cdot \theta^- = \frac{1}{\lambda_1} D^- \cdot \theta_{X_F}^- = (f_{X_F}^* D^-) \cdot \theta_{X_F}^- =
    D^- \cdot (f_{{X_F}})_* \theta_{X_F}^- = \lambda_1 D^- \cdot \theta^-.
    \label{<+label+>}
  \end{equation}
  Thus $D^- \cdot \theta^- = 0$. This shows (3)(b).

  Finally, we just have to show that the family $\left\{ \theta_{X_F}^+, D^- \right\} \cup \left\{ E; E \not \in \left\{
  E_+, F_+ \right\} \right\}$ is free. Suppose that
  \begin{equation}
    \alpha \theta_{X_F}^+ + \beta D^- + R = 0
    \label{-eq4}
  \end{equation}
  with $\alpha, \beta \in \R$ and $E_+, F_+ \not \in \Supp R$. Intersecting with $\theta^-$ in \eqref{-eq4} and using (2)
  and (3), we get $\alpha = 0$. Then, applying $f_{X_F}^*$ to \eqref{-eq4} we get $\beta = 0$. Thus $R = 0$ and we have shown
  (3)(c).
\end{proof}

\section{An invariant adelic divisor}\label{sec:invariant-adelic-divisor}
In this section, we use an iterative process to construct an invariant adelic divisor for $f$ and $f^{-1}$. This process
is comparable to the construction of Green functions for Henon maps over the affine plane in
\cite{bedfordPolynomialDiffeomorphismsC21991, kawaguchiLocalGlobalCanonical2009, ingramCanonicalHeightsHenon2014}.
However, our approach follows the construction of the canonical height for polarised endomorphisms of projective
varieties. We adapt the strategy of \emph{Tate's limiting argument} (see for example \S 2 of
\cite{zhangSmallPointsAdelic1993}). In this section we consider only affine surfaces $S_F$ with $\QAlb(S_F) = 0$.

\begin{thm}\label{thm:invariant-adelic-divisor}
  Let $F$ be either a finitely generated field over a number field $\K$ (arithmetic setting) or a finitely generated
  field over any field $\K$ with $\tr.deg F / \K \geq 1$ (geometric setting) and set either $k = \OO_\K$ or $\K$. Let
  $S_F$ be a normal affine surface over $F$ and $f$ be a loxodromic automorphism of $S_F$, there
  exist two unique, up to normalisation, adelic divisors $\overline{\theta^+}, \overline{\theta^{-}} \in \hat
  \Div_\infty(S_F / k)$ such that
  \begin{equation}
    f^* \overline{\theta^+} = \lambda_1 \overline{\theta^+}, \quad (f^{-1})^{*} \overline \theta^{-} = \lambda_{1}
    \overline \theta^{-}.
    \label{<+label+>}
  \end{equation}
  Furthermore, $\overline{\theta^+}$ and $\overline{\theta^-}$ are strongly nef adelic divisors.
\end{thm}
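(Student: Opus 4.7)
The plan is Tate's limiting argument adapted to the adelic framework of Yuan and Zhang. For $S_F = \G_m^2$, the only affine surface with $\QAlb \neq 0$ admitting loxodromic automorphisms, one constructs $\overline\theta^\pm$ directly from the monomial form of $f$; I focus on the main case $\QAlb(S_F) = 0$. By Proposition \ref{prop:dynamics-loxodromic-automorphism}, after replacing $f$ by a sufficient iterate I fix a completion $X_F$ of $S_F$ on which $f$ is algebraically stable, $\Supp\theta_X^+ = X_F \setminus S_F$, and $f_X^* \theta_X^+ = \lambda_1 \theta_X^+$. I then choose an initial strongly nef model adelic divisor $\overline D_0 \in \hat\Div_\infty(S_F/k)_{\mod}$ whose Weil class in $\Winf(S_F)$ equals $\theta_X^+$: extend $X_F$ to a projective $k$-model, represent $\theta_X^+$ by a nef effective $\R$-divisor on that model, and equip it with semipositive Green functions at every place (for instance a Weil metric at archimedean places). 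Set
\[
  \overline D_n := \lambda_1^{-n}(f^n)^*\overline D_0,
\]
which remains strongly nef because $f^n$ becomes a regular morphism on a sufficiently large blowup and pullback by a regular morphism preserves semipositivity.

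The central step is to show that $(\overline D_n)$ is Cauchy in the boundary topology. Writing $\overline E := f^*\overline D_0 - \lambda_1\overline D_0$, the telescoping identity $\overline D_{n+1} - \overline D_n = \lambda_1^{-(n+1)}(f^n)^*\overline E$ reduces everything to a uniform bound on $(f^n)^*\overline E$. The key geometric observation is that $\overline E$ has zero underlying Cartier $\R$-divisor on $X_F$: the difference $f_X^*D_0 - \lambda_1 D_0$ is principal and supported on $X_F \setminus S_F$, and $\QAlb(S_F) = 0$ forces $\OO(S_F)^\times = F^\times$, so any such principal $\R$-divisor must vanish. Consequently $g_{\overline E} = g_{\overline D_0}\circ f^{\an} - \lambda_1 g_{\overline D_0}$ is a difference of two Green functions of the same Cartier divisor, whose logarithmic poles along $X_F \setminus S_F$ cancel; hence $g_{\overline E}$ extends continuously, and therefore boundedly, to each compact $X_v^{\an}$. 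As a model divisor, $\overline E$ is moreover trivial at all but finitely many places. This yields $\overline E \leq C\overline A$ for a boundary divisor $\overline A$ and some constant $C$. Since $\sup_{S_F^{\an}}|g_{\overline E}\circ f^n| \leq \sup_{X_v^{\an}}|g_{\overline E}|$ uniformly in $n$ (as $f$ maps $S_F$ into $S_F \subset X_F$), the bound propagates to $(f^n)^*\overline E \leq C\overline A$ for all $n$, giving
\[
  \|\overline D_{n+1} - \overline D_n\|_{\overline A} \leq C\lambda_1^{-(n+1)}.
\]
The limit $\overline\theta^+ := \lim_n \overline D_n$ lies in $\hat\Div_\infty(S_F/k)$, inherits strong nefness, and satisfies $f^*\overline\theta^+ = \lambda_1\overline\theta^+$ by continuity of $f^*$.

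Uniqueness up to positive scalar follows from the spectral gap of Theorem \ref{thm:dynamic-automorphism-picard-manin}: the Weil class of any $\lambda_1$-eigenvector is a real multiple of $\theta^+$, so after subtracting I am reduced to a vertical eigenvector whose Green function $h$ is bounded on each $X_v^{\an}$ and satisfies $h\circ f = \lambda_1 h$; iterating gives $h = \lambda_1^{-n}(h\circ f^n) \to 0$, forcing $h \equiv 0$. Applying the same construction to $f^{-1}$ produces $\overline\theta^-$. The main obstacle is the continuous extension of $g_{\overline E}$ across $X_F \setminus S_F$: the cancellation of logarithmic singularities depends on the exact equality $f_X^* D_0 = \lambda_1 D_0$, and the behaviour at the indeterminacy point $p_-$ requires working on a resolving blowup and exploiting the explicit local normal forms for $f$ given by Proposition \ref{prop:dynamics-loxodromic-automorphism}. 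Once this bounded extension is established, the Tate estimate is geometric and the remaining arguments are formal.
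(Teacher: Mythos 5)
Your overall strategy—Tate's limiting argument in the Yuan--Zhang adelic framework—is the same as the paper's, and the choice of seed divisor $\overline D_0$ with $c(\overline D_0) = \theta_{X_F}^+$ is correct. But the central estimate fails as stated, and it is precisely the step you flag at the end as "the main obstacle": the function $g_{\overline E} = g_{\overline D_0}\circ f^{\an} - \lambda_1\, g_{\overline D_0}$ does \emph{not} extend continuously to $X_v^{\an}$. Away from the indeterminacy point $p_-$ the logarithmic singularities cancel, because on $X_F\setminus\{p_-\}$ the identity $f_{X_F}^*D_0 = \lambda_1 D_0$ holds scheme-theoretically; but $\overline E$ is a model adelic divisor on the resolving blowup $Y_F$, where the underlying divisor $F^*D_0 - \lambda_1\pi^*D_0$ is a nonzero $\pi$-exceptional divisor over $p_-$, and $g_{\overline E}\to-\infty$ at $p_-$. (You can see this already for a Hénon map on $\P^2$: near $p_-=[1:0:0]$ in coordinates $u=1/x$, $v=y/x$, the difference $g_0\circ f - \deg(f)\,g_0$ equals, up to $O(1)$, $d\log|v|$, which is unbounded below.) Consequently $\sup_{X_v^{\an}}|g_{\overline E}|=+\infty$, the bound $\|\overline D_{n+1}-\overline D_n\|_{\overline A}\leq C\lambda_1^{-(n+1)}$ does not follow, and the Cauchy property is not established.

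This is exactly the difficulty the paper spends most of its effort on. The paper does not try to bound $g_{\overline E}$ on all of $X_v^{\an}$; instead it proves convergence away from $\overline{\{p_-\}}$ (Proposition \ref{prop:convergence-away-from-p-minus}, a geometric series argument using that $h = h_1$ is bounded on the $f$-invariant set $W^-$ and vanishes identically at generic finite places by Lemma \ref{lemme:generic-places}), and then controls the region near $p_-$ by comparison with the Green function $G^-$ of $\overline\theta^-$, built symmetrically for $f^{-1}$, which is strictly positive and repelling near $p_-$: the claim with constants $M_0,\dots,M_4$ in \S\ref{subsec:conv-everywh} propagates the estimate by the change of variables $y=s^k(x)$ and the eigenequation $G^-\circ s^{-k}=\lambda_1^k G^-$. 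You would also need the uniformity over places (trivial Green function at almost all finite places, bounded constants at the remaining ones), which in the paper rests on Lemma \ref{lemme:good-model} and Lemma \ref{lemme:generic-places}; you assume this implicitly but it does not come for free. Your uniqueness argument via the spectral gap and vanishing of vertical eigen-Green-functions is essentially the right idea, but the existence half of the proof needs the two-region argument to close the gap.
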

\begin{rmq}
  With the notations of \S \ref{sub:sec-compatibility-adelic-divisor},
 we must have $c(\overline \theta^\pm) = \theta^\pm$ (up to multiplication by a positive constant) because of Theorem
 \ref{thm:dynamic-automorphism-picard-manin}. So our notation of $\overline \theta^\pm$ is compatible with Theorem
 \ref{thm:dynamic-automorphism-picard-manin}. Notice that since we work in the global setting, the models we consider
 are all Noetherian and we can use $\R$-divisors. This is crucial because in general for any completion $X_F$ of $S_F$,
 the divisors $\theta_{X_F}^{\pm}$ are $\R$-divisors.
\end{rmq}
This theorem was proven when $S_F = \cM_D$ is the Markov surface with algebraic parameter $D$ in
\cite{abboudUnlikelyIntersectionsProblem2024}. If $F$ is a number field, then the proof is analogous to the one in loc.
cit. If $\tr.deg F / \Q \geq 1$ or $\car F > 0$, the essence of the proof is the same as in the number field case but
more technical.

From now on, $k$ will either denote $\OO_\K$ in the arithmetic setting and $\K$ in the geometric setting and $\sB_k$ will
either denote a projective model of $f$ over $\spec \OO_\K$ in the arithmetic setting or over $\K$ in the geometric setting.
Start with the following lemma.
\begin{lemme}\label{lemme:convergence-diviseur-verticaux}
  For any vertical model adelic divisor $\overline M \in \hat \Div (S_F / k)$, we have
  \begin{equation}
    \frac{1}{\lambda^n} (f^n)^* \overline M \xrightarrow[n \rightarrow +\infty]{} 0
    \label{eq:<+label+>}
  \end{equation}
  in $\hat \Div (S_F / k)$.
\end{lemme}
\begin{proof}
  Let $q: \sX_k \rightarrow \sB_k$ be a projective model of $S_F \rightarrow \spec F$ over $k$ such that $\overline M$ is
  defined over $\sX_k$. Then, there exists an open subset $\cT_k \subset \sB_k$ such that $\sX_\cT := \sX_k
  \times_{\sB_k} \cT_k$ is flat over $\cT$ and $M$ is supported outside $\sX_\cT$. We can blow up $\sB_k \setminus \cT_k$
  and $\sX_k \setminus \sX_\cT$ such that $\cT$ admits a boundary divisor $\overline \sE_\cT$ in $\sB_k$. Then, $q^*
  \overline \sE_\cT$ is a boundary divisor of $\sX_\cT$ in $\sX_k$ and there exists $A > 0$ such that
  \begin{equation}
    - A q^* \overline \sE_\cT \leq \overline M \leq A q^* \overline \sE_\cT.
    \label{eq:<+label+>}
  \end{equation}
  And for every $N \geq 0$ we have $(f^N)^* q^* \overline \sE_\cT = q^* \overline \sE_\cT$ since $f$ induces the identity
  over $\sB_k$. Thus
  \begin{equation}
    -\frac{A}{\lambda^N} q^* \overline \sE_\cT \leq \frac{1}{\lambda^N} (f^N)^* \overline M \leq \frac{A}{\lambda^N} q^*
    \overline \sE_\cT.
    \label{eq:<+label+>}
  \end{equation}
\end{proof}

We show the following
\begin{prop}\label{prop:convergence-diviseur-model}
  Let $S_F$ be a normal affine surface over $F$ and let $f$ be a loxodromic automorphism of $S_F$, then there exists a
  unique $\overline{\theta^+} \in \hat \DivInf(S_F / k)$ such that for every model
  adelic divisor $\overline \sD \in \hat \Div_\infty(S_F / k)_{mod}$, one has
  \begin{equation}
    \label{eq:3}
    \frac{1}{\lambda_{1}^{N}} (f^{N})^{*} \overline \sD \rightarrow (\theta^- \cdot D)\overline \theta^{+}.
  \end{equation}
\end{prop}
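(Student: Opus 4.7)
The plan is a Tate-type limiting argument carried out in the complete space $\hat\Div_\infty(S_F/k)$, iterating the contraction $\lambda_1^{-1} f^*$.

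I would first fix a completion $X_F$ of $S_F$ satisfying Proposition \ref{prop:dynamics-loxodromic-automorphism} and use the basis of $\DivInf(X_F)_\R$ from Proposition \ref{prop:operateur-pull-back} to decompose the underlying divisor modulo principal as $D = \alpha \theta_X^+ + \beta D^- + R$ with $\alpha = \theta^- \cdot D$ and $p_+ \notin \Supp R$ (and $\beta = 0$ if $\lambda_1 \in \Z$). Since $f_X^*$ annihilates $R$ and multiplies $D^-$ by $\lambda_1^{-1}$, showing that $\lambda_1^{-N}(f^N)^*\overline{\sR}$ and $\lambda_1^{-N}(f^N)^*\overline{\sD^-}$ go to zero in the boundary topology reduces the proposition to constructing the limit when $D = \theta_X^+$ and $\alpha = 1$.

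For this main case, I would choose a semipositive model adelic divisor $\overline\sD_0 \in \hat\Div_\infty(S_F/k)_\mod$ with underlying divisor $\theta_X^+$, existence following from Goodman's theorem (ample effective divisor with support $X_F \setminus S_F$) combined with the Weil-metric construction from Section~8. Set the Tate iterates $\overline\sD_n := \lambda_1^{-n}(f^n)^*\overline\sD_0$, which satisfy the telescoping identity
\begin{equation*}
\overline\sD_{n+1} - \overline\sD_n = \lambda_1^{-(n+1)}(f^n)^*\overline\sE, \quad \overline\sE := f^*\overline\sD_0 - \lambda_1\overline\sD_0.
\end{equation*}
Because $f_X^*\theta_X^+ = \lambda_1\theta_X^+$, the incarnation on $X_F$ of the underlying divisor of $\overline\sE$ is zero, so $\overline\sE$ is vertical with respect to $X_F$. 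The goal is to establish a uniform bound $\|(f^n)^*\overline\sE\|_{\overline\sD_0''} \leq C$ in the boundary norm relative to some boundary divisor $\overline\sD_0''$; this yields $\|\overline\sD_{n+1} - \overline\sD_n\| \leq C\lambda_1^{-n}$, hence the Cauchy property and convergence to an adelic divisor $\overline\theta^+ \in \hat\Div_\infty(S_F/k)$. The functional equation $f^*\overline\theta^+ = \lambda_1\overline\theta^+$ follows from continuity of $f^*$, and uniqueness up to scalar is a consequence of the spectral gap in Theorem \ref{thm:dynamic-automorphism-picard-manin}.

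The main technical obstacle is this uniform bound. On a completion $Y_F$ above $X_F$ on which $f$ is regular, the underlying divisor of $\overline\sE$ is nontrivial---namely an exceptional divisor lying over the repelling point $p_-$---so the Green function $g_{\sE,v}$ has logarithmic singularities along these exceptional divisors, and $g_{\sE,v}\circ (f^n)^{\an}$ cannot simply be controlled by a constant. The bound must instead be taken against the boundary Green function $g_{0,v}$, which likewise blows up at infinity; controlling the resulting ratio uniformly in $n$ requires the explicit H\'enon-type normal forms for $f^{\pm 1}$ at $p_\pm$ from items (5)--(6) of Proposition \ref{prop:dynamics-loxodromic-automorphism}. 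In these local coordinates one can verify directly that the singularities of $g_{\sE,v}\circ (f^n)^{\an}$ are absorbed by a multiple of $g_{0,v}$ independently of $n$, using that $\overline\sE$ is a model adelic divisor so only finitely many places contribute. The very same normal-form analysis handles the analogous decay of $\lambda_1^{-N}(f^N)^*\overline\sR$ and $\lambda_1^{-N}(f^N)^*\overline{\sD^-}$ needed in the reduction step.
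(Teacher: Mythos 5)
Your outline agrees with the paper's structure — decompose $D$ via Proposition \ref{prop:operateur-pull-back}, show the $D^-$, $R$, and vertical components die, then run a Tate iteration for the $\theta_X^+$ piece — but two points need repair before the argument closes. First, you work with $f$ directly, whereas the only uniform geometric picture available from Proposition \ref{prop:dynamics-loxodromic-automorphism} (full boundary contraction to $p_\pm$, $p_\mp$ as the \emph{unique} indeterminacy point) holds for an iterate $s = f^{N_0}$, not for $f$ itself; for $N < N_0$ the map $f^N$ may have extra indeterminacy points on the boundary, and the paper has to prove the result for $s$ first (Proposition \ref{prop:convergence-pour-cartier-pour-s}) and then transfer back to $f$ via Propositions \ref{prop:convergence-pour-cartier-trivial-intersection}–\ref{prop:convergence-pour-cartier}. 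Your telescoping should be set up with $s$.

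Second, and more seriously, the phrase ``the singularities of $g_{\sE,v}\circ(f^n)^{\an}$ are absorbed by a multiple of $g_{0,v}$ independently of $n$, using the normal forms'' conceals the actual mechanism and, read naively, appears to fail: near $p_-$, the exceptional divisor carried by $(s^n)^*\overline\sE$ has log-singularity with coefficient of order $\lambda_1^n$, so a static normal-form comparison against $g_0$ gives a ratio that \emph{grows} with $n$, not one that is bounded. What saves the estimate is dynamical, not local: the set $U^-$ near $p_-$ is $(s^{-1})^{\an}$-invariant, so $s^n(x)\in U^-$ forces $x\in U^-$ as well, and then the functional equation for the already-constructed negative Green function (via the symmetric argument for $s^{-1}$ on $W^+$ giving $G^-\circ s = \lambda_1^{-1}G^-$, or the exact model equality $g_{0,v}\circ s = \lambda_1^{-1}g_{0,v}$ at the good finite places) converts $g_0(x)$ into $\lambda_1^n g_0(s^n(x))$, producing the geometric decay needed for the Cauchy estimate. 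This interleaving — building $G^-$ away from $p_+$ first, then using it to tame the Tate iteration near $p_-$ — is the Claim in the paper's \S\ref{subsec:conv-everywh} and is the heart of the proof; your proposal names the right target ($\|(s^n)^*\overline\sE\|\le C$) but does not supply the estimate that achieves it.
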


The proof of this proposition will take the whole section.
Start with a completion $X_F$ of $S_F$ that satisfies Proposition \ref{prop:dynamics-loxodromic-automorphism}. We denote
by $s = f^{N_0}$ an iterate of $f$ such that $s^{\pm 1}$ contracts $X_F \setminus S_F$ to $p_\pm$ and such that if the
normal form of $s^{\pm 1}$ at $p_\pm$ is of the form \eqref{eq:forme-normale-henon}, then $c \geq 2$. We prove Proposition
\ref{prop:convergence-diviseur-model} for $s$ and then will deduce the result for $f$. We replace $\lambda_1$ by
$\lambda_1^{N_0}$.

\begin{rmq}\label{rmq:dynamical-compactification-model}
  If $\sX_k$ is a projective model of $X_F$ over $k$, then we can always assume that
  \begin{equation}
    \sZ_k = \overline{ \left\{ p_+\right\}}
    \cap \overline{ \left\{ p_- \right\}} = \emptyset. \label{eq:empty-intersection}
  \end{equation}
 Indeed, otherwise $\sZ_k$ is a closed subvariety of codimension
  $\geq 2$ in $\sX_k$ and does not intersect $X_F$. So the blow-up of $\sX_k$ along $\sZ_k$ is a projective model of
  $X_F$ over $k$ that satisfies \eqref{eq:empty-intersection}. We will always make this assumption from now on.
\end{rmq}

\begin{lemme}\label{lemme:good-model}
  Let $\sX_k$ be a projective model of $X_F$ over $\sB_k$ there exists an open subset $\cT_k \subset \sB_k$ such that
  $\sX_\cT := \sX_k \times_{\sB_k} \cT_k$ satisfies
  \begin{enumerate}
    \item \label{item:good-model1} $\sX_\cT$ is a projective model of $X_F$ over $\cT$.
    \item \label{item:empty-intersection-of-closure} The closure of $p_+$ and $p_-$ in $\sX_\cT$ have no point in common.
    \item \label{item:normal-form} If $(z,w)$ are local coordinates in $X_F$ at $p_\pm$ such that $s^{\pm 1}$ has
      local normal form \eqref{eq:forme-normale-monomiale} or \eqref{eq:forme-normale-henon} with regular functions
      $\phi, \psi$, then there exists an open neighbourhood $O^\pm$ of $\overline{\left\{ p_\pm \right\}}$ in $\sX_\cT$
      such that
      \begin{enumerate}
        \item \label{item:boundary-no-intersection-with-p} $\sY_\cT^\pm := \sX_\cT \setminus \OO^\pm$ is horizontal and
          its closure in $\sX_k$ does not intersect
          $\overline{ \left\{ p_\pm \right\}}$.
        \item \label{item:divisor-phi-psi-no-intersection-with-p} $z , w, \phi, \psi$ are regular functions over $O^\pm$
          and the horizontal components of $\div(\phi),
          \div(\psi)$ not intersecting $O^\pm$ do not intersect $\overline{ \left\{ p_\pm \right\}}$.
        \item \label{item:local-equation-of-p} $(z,w)$ are generators of the ideal sheaf of $\overline{ \left\{ p_\pm
          \right\}}$ over $O^\pm$.
      \end{enumerate}
    \item \label{item:good-model2} The indeterminacy locus of the birational map $s^{\pm 1} : \sX_\cT \dashrightarrow
      \sX_\cT$ is the closure of $p_\mp$ in $\sX_\cT$.
      \item \label{item:good-model3} If $\sU_\cT = \sX_\cT \setminus \overline{\partial_{X_F} S_F}$, then $s$ extends to an
        automorphism of $\sU_\cT$.
  \end{enumerate}
   \end{lemme}
   \begin{proof}
      We treat the case where the local normal form is monomial. Let $(z,w)$ be local coordinates at $p_+$ in $X_F$,
      such that we have
     \begin{equation}
       (s^{\pm 1})^* (z, w) = \left( z^a w^b \phi, z^c w^d \psi \right).
       \label{eq:<+label+>}
     \end{equation}
     then, $z,w, \phi, \psi$ induces rational functions over $\sX_k$. Let $O^+$ be the complement in $\sX_k$ of
     the union of
     \begin{enumerate}
       \item The vertical components of $\supp \div (\alpha)$ for $\alpha = z,w, \phi,\psi$.
       \item $\overline E$ for any irreducible component $E \subset \partial_{X_F} S_F$ such that $p_+ \not \in E$.
       \item The horizontal components of $\supp \div (\alpha)$ for $\alpha = z,w, \phi, \psi$ where there is a pole.
     \end{enumerate}
     Then, $z,w, \phi, \psi$ are regular functions over $O^+$ and $z = w = 0$ is an equation of $\overline p_+$ in
     $O^+$.

     We do the same procedure with $p_-$ which yields an open subset $O^-$. Now, let $\sZ$ in $\sX_k$ be the
     vertical closed subset defined as the union of
     \begin{enumerate}
       \item The vertical irreducible components of the complement of $O^+$ and $O^-$.
       \item The vertical irreducible components of $\Ind : s^{\pm 1} \sX_k \dashrightarrow \sX_k$.
       \item The vertical components of the exceptional locus of $s^{\pm 1}: \sX_k \dashrightarrow \sX_k$.
     \end{enumerate}
     We define $\cT = \sB_k \setminus q (\sZ)$ and we replace $O^\pm$ by $O^\pm \cap \sX_\cT$. Every condition in the
     lemma is satisfied except maybe for \ref{item:boundary-no-intersection-with-p} and
     \ref{item:divisor-phi-psi-no-intersection-with-p}. To ensure these two conditions hold, we
     blow up first $\overline{\sY_\cT^\pm} \cap \overline{ \left\{ p_\pm \right\}}$. This is a vertical blow-up and the
     center is outside $\sX_T$. Then, we can blow up $\overline{ \left\{ p_\pm \right\}} \cap E$ where $E$ runs through
     the horizontal components of $\div(\phi)$ and $\div (\psi)$ not intersecting in $O^\pm$. Again these are all
     vertical blow-ups.
   \end{proof}

   \begin{dfn}\label{dfn:def-of-V}
   In the arithmetic case, we write $V \subset \spec \OO_\K$ for the open subset which is the image of $\cT_{\OO_\K}$
   in $\spec \OO_\K$. In the geometric case, we set $V = \cT$.
   \end{dfn}

 \begin{ex}\label{ex:good-model}
   Let $F = \Q(t), S_F = \A^2_{\Q(t)}$ and
   \begin{equation}
   s(x,y) = \left( y, x + \frac{1}{2t} y^3 \right)
     \label{eq:<+label+>}
   \end{equation}
   Then, $X_F = \P^2_{\Q(t)}$ is a completion of $S_F$ that satisfies Proposition
   \ref{prop:dynamics-loxodromic-automorphism} with
   \begin{equation}
     p_- = [1:0:0], p_+ = [0:1:0].
     \label{eq:<+label+>}
   \end{equation}
   Then, $\sX_\Z = \P_\Z^2 \times \P_\Z^1$ is a projective model of $S_F$ over $\Z$ and we have the commutative diagram
   \begin{center}
   \begin{tikzcd}
     S_F = \A^2_{\Q(t)} \ar[r, hook] \ar[d] & \sX_\Z = \P_\Z^2 \times \P^1_\Z \ar[d, "q"] \\
     \spec \Q(t) \ar[r, hook] & \P^1_\Z = B_\Z
   \end{tikzcd}
   \end{center}
   We write $[X:Y:Z]$ for the projective coordinates over $\P^2_\Z$ and $[T:S]$ for the projective coordinates over
   $\P^1_\Z$. The rational map $s$ becomes
   \begin{equation}
     s \left( [X:Y:Z] , [T:S] \right) = \left( [2TYZ^2: 2TXZ^2 + SY^3 : 2TZ^3] , [S:T] \right).
     \label{eq:<+label+>}
   \end{equation}
   The indeterminacy locus of $s : \sX_\Z \dashrightarrow \sX_\Z$ is
   \begin{equation}
     \overline{ \left\{ p_\mp \right\}} \cup \left\{ T = Y = 0  \right\} \cup \left\{ 2 = S = 0 \right\} \cup \left\{
     2 = Y = 0 \right\} \cup \left\{ Z = S = 0 \right\}
   \end{equation}
   and the exceptional locus of $s$ is
   \begin{equation}
     \left\{ T = 0 \right\} \cup \left\{ 2 = 0 \right\}.
     \label{eq:<+label+>}
   \end{equation}
   We can do similar computations for $s^-1$ and we can show that the open subset
   \begin{equation}
     \cT_\Z = \P^1_{\Z} \setminus \left( \left\{ T = 0 \right\} \cup \left\{ S = 0 \right\} \cup \left\{ 2 = 0 \right\}
     \right).
     \label{eq:<+label+>}
   \end{equation}
   and $V = \spec \Z \setminus \left\{ (2) \right\}$.
 \end{ex}

\begin{rmq}\label{rmq:not-compact}
  It is important to notice that in general for any place $v$, $\sX_\cT^{\an,v}$ is not compact. Indeed,
  $\sX_k^{\an,v}$ is compact but we have removed $(q^{\an})^{-1} (\sB_k \setminus \cT_k)^{\an,v}$. For example, in Example
  \ref{ex:good-model}, we have for the archimedean place of $\Q$
  \begin{equation}
    \sX_\cT^{\an,v} = \P^2 (\C) \times \left(\P^1 (\C) \setminus \left\{ 0, \infty \right\}\right)
    \label{eq:<+label+>}
  \end{equation}
  where $0 = [0:1]$ and $\infty = [1:0]$.
\end{rmq}

\begin{lemme}
  \label{lemme:generic-places}
  Let $D \in \DivInf(X_F)_\R$ such that $s_X^* D = \mu D$ for some $\mu \in \R$ and let $\overline \sD$ be a model
  adelic extension of $D$. It is defined over a
  projective model $\sX_{k}$ of $X_F$ over $k$ and we can suppose that there exists a regular morphism $\sX_k
  \rightarrow \sB_k$ between projective varieties over $k$ with generic fiber $X_F \rightarrow \spec F$. Let $\cT_k
  \subset \sB_k$ is an open subset that satisfies Lemma
  \ref{lemme:good-model} and let $V$ be its associated set from Definition \ref{dfn:def-of-V}. Suppose no vertical component
  of the $\R$-Weil divisor $\sD$ is above $\cT_k$. For every finite place $v$ above $V$, we define the following open
  neighbourhood $U^-_v$ of $p_\pm$ in $\sX_\cT^{\an,v}$:
  \begin{equation}
    U^-_v := \left\{ x \in \sX_\cT^{\an ,v} :  r_{\sX_v} (x) = r_{\sX_v}(p_-)\right\}.
    \label{eq:<+label+>}
  \end{equation}
  Then, $s^{-1}$ is defined over $U^-_{v}$, $U^-_v$ is $s^{- 1}$-invariant and if $W_v^- = \sX_\cT^{\an,v}
  \setminus U^\pm_v$, then $W_v^-$ is $s$-invariant and
    \begin{equation}
      \left(g_{\left(\sX_v, \sD_v\right)} \circ s^{\an}\right)_{|W_v^-} = \mu {g_{\left(\sX_v, \sD\right)}}_{|W_v^-}.
      \label{eq:egalite-green-function-plus} 
    \end{equation}
  \end{lemme}
  \begin{proof}
    First, recall with Remark \ref{rmq:dynamical-compactification-model} that we always assume $\overline{\{p_+\}} \cap
    \overline{\{p_-\}} = \emptyset$. Thus, for every finite place $v$, $r_{\sX_v}(p_-) \neq r_{\sX_v}(p_+)$.
    By our assumption, $\Ind (s^{-1} : \sX_\cT \dashrightarrow \sX_\cT) = \overline{ \left\{ p_+ \right\}} \subset
    \sX_\cT$, therefore the indeterminacy locus of $s^{-1} : \sX_k \dashrightarrow \sX_k$ is equal to the union to the
    closure of $p_+$ in $\sX_k$ and some vertical components that are not above $\cT_k$.
    In particular, $s^{-1}$ induces an endomorphism $\sX_\cT^{-1} \setminus \overline{\{ p_+\}} \rightarrow \sX_\cT \setminus
    \overline{\{ p_+\}}$ and $s^{-1} (\sX_\cT \cap \overline{\{ p_-\}}) = \sX_\cT \cap \overline{\{ p_-\}}$. The same relations
    holds over the Berkovich analytification. Thus, for any finite place $v$, $U^-_v$ is $s^{-1}$-invariant.

  We show \eqref{eq:egalite-green-function-plus}. 
  Let $\pi :Y_F \rightarrow X_F$ be a minimal sequence of blow-ups such that $s$ lifts to a regular map $S : Y_F
  \rightarrow X_F$. The morphism $\pi$ is a composition of point blow-ups above $p_-$. Let $\pi : \sY_k \rightarrow
  \sX_k$ be the induced blow-ups of $\sX_k$, $\sY_k$ is then a projective model of $Y_F$ over $k$ and $S$ lifts to a
  birational map $S : \sY_k \dashrightarrow \sX_k$. The indeterminacy locus of $S$ is a vertical subvariety which does not
  lie above $\sT$. We blow it up and still call $\sY_k$ the obtained projective variety over $k$ with a birational
  morphism $S : \sY_k \rightarrow \sX_k$.

  We have an isomorphism $\pi : \pi^{-1}\left( \sX_\cT \setminus \overline{\{p_-\}}\right) \xrightarrow{\sim} \sX_\cT \setminus
  \overline{\{p_-\}}$ because the center of $\pi$ is outside $\sX_\cT \setminus \overline{\{p_-\}}$. So for any
  finite place $v \in V$ we also have the isomorphism
\begin{equation}
  \label{eq:4}
  \pi_{v}^{\an} : (\pi_{v}^{-1})^{\an} (W^-_{v}) \xrightarrow{\sim} W^-_{v}
\end{equation}
because
\begin{equation}
  \pi_v \circ r_{\sY_v} = r_{\sX_v} \circ \pi_v^{\an}.
  \label{eq:<+label+>}
\end{equation}
Now, $s_{X_F}^* D = \mu D$, therefore the vertical part of the $\R$-Weil divisor 
\begin{equation}
  S^* \sD - \mu \pi^* \sD
  \label{<+label+>}
\end{equation}
has no support over $\cT$ and its horizontal part is $\pi$-exceptional, i.e supported over $p_-$ on the generic fiber.
This implies that
\begin{equation}
  \Supp (S^* \sD - \mu \pi^* \sD) \cap r_{\sY_v} (W_v^-) = \emptyset.
  \label{<+label+>}
\end{equation}
This yields \eqref{eq:egalite-green-function-plus} by Lemma \ref{lemme:effectiveness-on-Green-function}
\end{proof}

From this lemma, we show
\begin{prop}\label{prop:convergence-diviseur-propre-model}
  Let $X_F$ be a completion of $S_F$ that satisfy Proposition \ref{prop:dynamics-loxodromic-automorphism} and $D \in
  \DivInf(X_F)_\R$ such that $f_{X_F}^* D = \mu D$. If $(\sX_{k}, \overline \sD)$ is a model adelic extension of
  $(X_F,D)$, then the sequence
\begin{equation}
  \label{eq:5}
 \frac{1}{\lambda_{1}^{N}} (s^{N})^{*} \overline \sD
\end{equation}
converges to
\begin{enumerate}
  \item zero if $|\mu| < \lambda_{1}$,
  \item to $\overline \theta^{+} (X_F)$ if $\mu = \lambda_{1}$ and $D = \theta_{X_F}^{+}$ where $\overline
    \theta^{+}(X_F)$ is an adelic divisor over $S_F$ which a priori depends on $X_F$.
\end{enumerate}
 \end{prop}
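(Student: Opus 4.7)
The approach is a Tate-style telescoping argument in $\hat \Div_\infty(S_F/k)$. Passing to a common projective model $\sY_{B_\K}$ on which the lift of $s$ is regular, set
$$\overline \sE := s^*\overline \sD - \mu\, \overline \sD \in \hat \Div_\infty(S_F/k)_{\mod}.$$
The underlying $F$-divisor is $s_{X_F}^*D - \mu D = 0$ by hypothesis, so $\overline \sE$ is a vertical model adelic divisor. Writing $\overline \sD_N := \lambda_1^{-N}(s^N)^*\overline \sD$, one obtains the recurrence
$$\overline \sD_{N+1} = \tfrac{\mu}{\lambda_1}\overline \sD_N + \tfrac{1}{\lambda_1^{N+1}}(s^N)^*\overline \sE.$$

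The heart of the proof is a boundary-norm estimate on $\lambda_1^{-(N+1)}(s^N)^*\overline \sE$. Because $\overline \sE$ is a vertical model adelic divisor, its Green function $g_{\overline \sE}$ vanishes at all but finitely many places, and at each of those places it is a continuous function on the compact Berkovich space $X_{F_v}^{\an}$; set $C := \sup_v \|g_{\overline \sE}\|_\infty < \infty$. Since $s$ is an automorphism of $S_F$ it preserves $S_{F_v}^{\an}$ for each $v$, so $g_{\overline \sE}\circ (s^N)^{\an}$ is again bounded by $C$ on $S_F^{\an}$. Meanwhile, the Green function $g_0$ of $\overline \sD_0$ extends lower-semicontinuously to $X_F^{\an}$ with value $+\infty$ on $X_F\setminus S_F$, so $1/g_0$ is continuous on the compact space $X_F^{\an}$, hence bounded; there is $c_0>0$ with $g_0\geq c_0$ on $S_F^{\an}$. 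Together these yield
$$-\tfrac{C}{c_0\lambda_1^{N+1}}\,\overline \sD_0 \leq \tfrac{1}{\lambda_1^{N+1}}(s^N)^*\overline \sE \leq \tfrac{C}{c_0\lambda_1^{N+1}}\,\overline \sD_0,$$
a geometrically decaying, hence summable, bound.

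Plugging this into the recurrence: when $\mu=\lambda_1$, $\overline \sD_{N+1}-\overline \sD_N = \lambda_1^{-(N+1)}(s^N)^*\overline \sE$, summability makes $(\overline \sD_N)$ Cauchy in the boundary norm, and the limit is the desired $\overline \theta^+(X_F)$. When $|\mu|<\lambda_1$, iterating the recurrence yields
$$\overline \sD_N = \Bigl(\tfrac{\mu}{\lambda_1}\Bigr)^N \overline \sD + \sum_{k=0}^{N-1}\Bigl(\tfrac{\mu}{\lambda_1}\Bigr)^{N-1-k}\tfrac{(s^k)^*\overline \sE}{\lambda_1^{k+1}};$$
the first term tends to $0$ in the boundary norm since $\overline \sD$ is dominated by a multiple of $\overline \sD_0$ and $|\mu/\lambda_1|<1$, while the sum is bounded by $\tfrac{C}{c_0\lambda_1^N}\sum_{j=0}^{N-1}|\mu|^j\,\overline \sD_0$, which tends to $0$ in each regime $|\mu|\leq 1$ or $1<|\mu|<\lambda_1$.

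The main obstacle is the estimate in the second paragraph: it rests on two facts that are not entirely formal, namely that a vertical model adelic divisor has Green function uniformly bounded across the disjoint union of all places (which uses that only finitely many special fibers in a fixed model support vertical components), and that the Green function of a boundary divisor is bounded below on $S_F^{\an}$ (which uses compactness of $X_F^{\an}$ and the lower semicontinuity of $g_0$). Once these are in place, Tate's telescoping closes cleanly in the boundary topology, in direct parallel with the classical construction of canonical Green functions for polarized endomorphisms.
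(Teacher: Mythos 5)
The Tate-telescoping framework is the right instinct, but the argument hinges on the assertion that $\overline\sE := s^*\overline\sD - \mu\,\overline\sD$ is a \emph{vertical} model adelic divisor, and this is where it breaks. The hypothesis $f_{X_F}^*D=\mu D$ is an identity in $\DivInf(X_F)_\R$, i.e.\ it says $(\pi_{YX})_*\bigl(\sF_s^*D\bigr)=\mu D$ where $\sF_s:Y_F\to X_F$ lifts $s$ on a higher completion and $\pi_{YX}:Y_F\to X_F$ is the morphism of completions. It does \emph{not} say that the honest Cartier pullback $s^*D=\sF_s^*D$ equals $\mu\,\pi_{YX}^*D$ in $\Cinf(S_F)$. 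In general $\sF_s^*D-\mu\,\pi_{YX}^*D$ is a nonzero $\pi_{YX}$-exceptional divisor supported over the indeterminacy point $p_-$. (Concretely, for a H\'enon map of degree $2$ on $\P^2$ with $D=L_\infty$: after blowing up $p_-$ one finds $\sF_s^*L_\infty=2\tilde L+E$ while $\pi_{YX}^*(2L_\infty)=2\tilde L+2E$, so the difference is $-E\neq 0$.) Thus the horizontal part of $\sE$ is nonzero, $\overline\sE$ is supported at infinity but \emph{not} vertical, and its Green function has logarithmic singularities along the analytifications of those exceptional divisors — at \emph{every} place, not finitely many. The uniform bound $\sup_v\|g_{\overline\sE}\|_\infty<\infty$ on $S_F^{\an}$ therefore fails, and the rest of the estimate collapses.

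This is exactly the difficulty the paper's two-stage argument is designed to circumvent: the function $h_1=\lambda_1^{-1}g_{\overline\sD}\circ s^{\an}-(\mu/\lambda_1)g_{\overline\sD}$ extends continuously only over $W^-$, the complement of an attracting neighbourhood $U^-$ of $\overline{\{p_-\}}$, and the heart of the proof is the case analysis in \S\ref{subsec:conv-everywh} showing $|h_1\circ s^k|\le Mg_0$ on all of $U_\K^{\an}$ by comparing against $G^-$ inside $U^-$ (using $G^-\circ s=\lambda_1^{-1}G^-$ and the normal forms at $p_\pm$). Your telescope would close cleanly if $\overline\sE$ were vertical — exactly as it does for polarised endomorphisms where $f^*L\cong L^{\otimes q}$ is an actual line-bundle isomorphism — but here algebraic stability only delivers a pushforward identity in the N\'eron--Severi group of a fixed completion, and that is strictly weaker.

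A secondary, less consequential slip: the claim that $g_0\ge c_0>0$ on all of $S_F^{\an}$ by compactness of $X_F^{\an}$ is also false. The space $X_F^{\an}=\bigsqcup_v X_{F_v}^{\an}$ is not compact, and more to the point, at a good finite place $v$ the model Green function $g_{0,v}$ vanishes on the entire preimage of the interior of the special fibre of $\sU_v$ under the reduction map. In your intended application this would have been harmless (the numerator also vanishes there for a genuinely vertical divisor), but it is worth flagging since the same issue reappears whenever one tries to bound a boundary-norm by a sup-norm of Green functions.
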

 We split the proof in two parts. We can suppose that there exists a projective model $\sB_k$ of $\spec F$ over $k$ and
 a morphism $\sX_k \rightarrow \sB_k$ with generic fiber $X_F \rightarrow \spec F$.
 Let $\sT_k \subset \sB_k$ be an open subset that satisfies Lemma \ref{lemme:good-model} and such that no vertical component
 of $\sD$ in $\sX_k$ lies above $\sT_k$. We write $V$ for its associated subset in Definition \ref{dfn:def-of-V}.
 Transposing the statement of Proposition \ref{prop:convergence-diviseur-propre-model} in terms of Green functions, we
 have to study the convergence of the sequence of functions
\begin{equation}
  \label{eq:15}
  g_{N} := \frac{1}{\lambda_{1}^{N}} g_{\overline \sD} \circ (s^{\an})^{N}
\end{equation}
over $\sU_\cT^{\an}$ with respect to the boundary topology.
We first prove the convergence away from
$\overline{p_{-}}$ and then, around $\overline{p_{-}}$. Set
\begin{equation}
  \label{eq:8}
  h_{N} = \frac{1}{\lambda_{1}^{N}} g_{\overline \sD} \circ (s^{\an})^{N} - \left(\frac{\mu}{\lambda_{1}}\right)^{N}
  g_{\overline \sD}.
\end{equation}

We can suppose up to blowing up $\sB_k \setminus \cT_k$ (and $\sX_k \setminus \sX_\cT$) that $\cT_k$ admits a boundary
divisor $\overline \sE_\cT$ defined over $\sB_k$. We will still write $\overline \sE_\cT$ for its pullback over
$\sX_k$ and $g_T$ its associated Green function over $\sX_\cT^{\an}$.

\subsection{Convergence away from $\overline{\{p_-\}}$}
\label{subsec:conv-away-from}
Write $B_\K = \sB_k \times_k \spec \K$, $X_\K = \sX_k \times_k \spec K$, $T_\K = \cT_k \times_k \spec K$, $X_T = X_\K
\times_{B_\K} T_\K$ and $U_T = \sU_\cT \times_\cT T_\K$. In particular, $U_T$ is a quasiprojective model of $S_F$ over
$\K$ and $X_T$ is a quasiprojective model of $X_F$ over $\K$.
We will write $g_T$ for the Green function of $\overline \sE_\cT$ over $X_T^{\an}$.
Since $s$ defines an endomorphism $s : X_T \setminus \overline{\{p_{-}\}} \rightarrow X_T \setminus
\overline{\{p_{-}\}}$, we have the induced map on Berkovich spaces
\begin{equation}
  \label{eq:7}
  s^{\an} : \left(X_T \setminus \overline{\{p_{-}\}}\right)^{\an} \rightarrow \left(X_T \setminus
  \overline{\{p_{-}\}}\right)^{\an}.
\end{equation}
We define an $(s^{-1})^{\an}$-invariant open neighbourhood $U^- = \bigsqcup_{v} U^-_{v}$ of $p_{-}$ in $X_T^{\an}$ as
follows.
For every finite place $v$ of $V$, define
\begin{equation}
  \label{eq:9}
  U^-_{v} := \left\{x \in X_{T}^{\an,v} : r_{\sX_{v}}(x) \in \overline{p_-} \right\}.
\end{equation}
By Lemma \ref{lemme:generic-places}, $U^-_v$ is indeed $(f^{-1})^{\an,v}$-invariant.

For every other place $v$ (there is a finite number remaining), we do the following. Recall the definitions of the open
neighbourhood $O^-$ of $\overline p_-$ in $\sX_\cT$ and the functions $z,w, \phi, \psi$ regular over $O^-$ appearing in the
local normal form of $s^{-1}$. We still write $O^-$ for its intersection with $X_T$. 
Let $\sD_\phi, \sD_\psi$ be the Weil divisor of $\phi, \psi$ over $\sX_k$. By the condition
\ref{item:divisor-phi-psi-no-intersection-with-p} of Lemma \ref{lemme:good-model}, the horizontal part of $\sD_\phi,
\sD_\psi$ not intersecting $O^{-}$ do not intersect $\overline{p_-}$. Let $A > 0$ be such that $-A \sE_T \leq
\sD_{\phi, vert}, \sD_{\psi,vert} \leq A \sE_T$, we have that $\sD_\phi + A \sE_T$ is an effective Weil divisor at any point of
$\overline {p_-}$. Let $K^-_v = r_{\sX_v}^{-1} (\overline{p_-} \cap \sX_T) \subset X_T^{\an,v}$, by Lemma
\ref{lemme:effectiveness-on-Green-function} we have that over $K^-_v, \left| \phi \right|_v , \left| \psi \right|_v \leq
e^{A g_T}$.

Now, let $U^-_v \subset (O^-)^{\an,v}$ be the open subset defined by
\begin{equation}
  U^-_v = \left\{ \left| z \right|_v, \left| w \right|_v < \epsilon_v e^{-A g_T} \right\}.
  \label{eq:def-ouvert-U-moins}
\end{equation}
For some $1> \epsilon_v > 0$ small enough such that $U^-_v \subset K^-_v$. This is a neighbourhood of $\overline{ \left\{
p_- \right\}}^{\an,v}$ in $X_T^{\an,v}$. 
If the local normal form of $s^{-1}$ is monomial, i.e of the form \eqref{eq:forme-normale-monomiale}, then
\begin{equation}
  (s^{-1})^* z = \phi z^a w^b
  \label{eq:<+label+>}
\end{equation}
with $a+b \geq 2$ (since $s^{-1}$ must contract $z = 0$ and $w = 0$), therefore over $U^-_v$ we get
\begin{equation}
  \left| (s^{-1})^* z \right|_v = \left| \phi_v \right| \left| z \right|_v^a \left| w \right|_v^b \leq \epsilon_v^2 e^{A
  g_T (1 - (a+b))} \leq \epsilon_v e^{-A g_T}  \label{eq:<+label+>}.
\end{equation}
The same computation works for $(s^{-1})^* w$. If the local normal form of $s^{-1}$ is of the form
\eqref{eq:forme-normale-henon}, then since $a \geq 2$ and $c+d \geq 2$, the same computation works. 
Therefore, $U^-_v$ is $(s^{-1})^{\an, v}$-invariant.

We define $W^- = X_T^{\an} \setminus U^-$.

\begin{lemme}\label{lemme:estimation-away-from-p-minus}
  Write $h = h_1$, then over $W^-$, there exists a constant $C > 0$ such that
  \begin{equation}
    -C g_T \leq h \leq C g_T.
    \label{eq:inequality-g-T}
  \end{equation}
\end{lemme}
\begin{proof}
  First note that by Lemma \ref{lemme:generic-places}, for any place $v \in V[\fin], h \equiv 0$ over $W^-_v$, so
  \eqref{eq:inequality-g-T} holds.
  Let $\pi : Y_F \rightarrow X_F$ be a minimal sequence of point blow-ups such that the lift of $s$ is a regular map $S
  : Y_F \rightarrow X_F$. In particular, $\pi$ is a sequence of point blow ups above $p_-$. We can find a projective
  model $Y_\K$ of $Y_F$ over $\K$ such that $\pi, S$ extend to a birational morphisms $\pi, S : Y_\K \rightarrow
  X_\K$. We also find a projective model $\sY_k$ of $Y_\K$ over $k$ where $\pi, S : \sY_k \rightarrow \sX_k$ extend to
  regular morphism. Since $s_{X_F}^{*} D = \mu D$, we have that the divisor $\frac{1}{\lambda} S^* D -
  \frac{\mu}{\lambda} D$ is a $\pi$-exceptional divisor in $Y_F$. Fix a place $v \not \in V[\fin]$. We first show
  \eqref{eq:inequality-g-T} over $(O^-)^{\an,v} \setminus U^-_v$. Let $z,w$ be the regular functions over $O^-$
  appearing in Lemma \ref{lemme:good-model} and let $E,F$ be the prime divisors in $O^-$ with equation $z = 0$ and $w=0$
  respectively. The functions $- \log \left| z \right|_v, - \log \left| w \right|_v$ are Green functions of $E$ and $F$ over
  $(O^-)^{\an,v}$ respectively. Since $\pi$ is a sequence of blow-ups above $p_- = E \cap F$, there exists a constant $A
  > 0$ such that, in $Y_F$,
  \begin{align}
    -A \pi^* E &\leq \frac{1}{\lambda} S^* D - \frac{\mu}{\lambda} D \leq A \pi^* E\\
    -A \pi^* F &\leq \frac{1}{\lambda} S^* D - \frac{\mu}{\lambda} D \leq A \pi^* F.
    \label{eq:<+label+>}
  \end{align}
  Thus, we can find a constant $B > 0$ such that
  \begin{align}
    -A \pi^* E - B \sE_T &\leq \frac{1}{\lambda} S^* \sD - \frac{\mu}{\lambda} \sD \leq A \pi^* E + B \sE_T\\
    -A \pi^* F - B \sE_T &\leq \frac{1}{\lambda} S^* \sD - \frac{\mu}{\lambda} \sD \leq A \pi^* F + B \sE_T.
    \label{<+label+>}
  \end{align}
  in $\sY_k$ since $\overline \sE_\cT$ is a boundary divisor of $Y_T = Y_\K \times_{B_\K} T_\K$.
  Therefore, looking at Green functions, we have up to replacing $B$ by a higher constant if $v$ is archimedean,
  \begin{align}
    \label{eq:estimate-z}
    A \log \left| z \right|_v - B g_T &\leq h \leq -A \log \left| z \right|_v + B g_T \\
    A \log \left| w \right|_v - B g_T &\leq h \leq -A \log \left| w \right|_v + B g_T
    \label{eq:estimate-w}
  \end{align}
   Now, over $(O^-)^{\an,v} \setminus U^-_v$ we have either
  \begin{equation}
    \log \left| z \right|_v \geq \log \epsilon_v - g_T \text{ or } \log \left| w \right|_v \geq \log
    \epsilon_v - g_T.
    \label{eq:outside-O-minus}
  \end{equation}
  By putting \eqref{eq:estimate-z}, \eqref{eq:estimate-w} and \eqref{eq:outside-O-minus} together, we get that there
  exists $C_1 >0$ such that
  \begin{equation}
    -C_1 g_T \leq h \leq C_1 g_T.
    \label{eq:<+label+>}
  \end{equation}
  Now, by Lemma \ref{lemme:good-model} \ref{item:boundary-no-intersection-with-p} the closure $Z$
  of $U_T \setminus O^-$ in $X_\K$ is a horizontal subvariety that does not intersect $\overline{ \left\{ p_-
  \right\}}$. Therefore, there is a compact neighbourhood of $Z^{\an,v}$ over which we have
\begin{equation}
  - C_2 g_T \leq h \leq C_2 g_T
\label{eq:<+label+>}
\end{equation}
for some constant $C_2 > 0$. We set $C = \max(C_1, C_2)$. Doing this procedure for the finite number of places outside
$V[\fin]$, we get that \eqref{eq:inequality-g-T} holds over $W^-$.
\end{proof}

\begin{prop}\label{prop:convergence-away-from-p-minus}
  The sequence $h_{N}$ converges over $W^{-}$ with respect to the boundary topology to
  \begin{enumerate}
    \item zero if $|\mu| < \lambda_{1}$.
    \item to a continuous function $h^{+}$ if $\mu = \lambda_{1}$ and
      $D = \theta_{X_F}^{+}$ such that $h^+_{|W^-_{V[\fin]}} \equiv 0$ and
      \begin{equation}
        -A g_T \leq h^+ \leq A g_T
        \label{eq:<+label+>}
      \end{equation}
      for some constant $A>0$. Furthermore, if $G^{+} = h^{+} + g_{\overline \sD}$, then
      $G^{+}$ defines a continuous function over $W^{-} \cap U_T^{\an}$ and $G^{+} \circ s^{\an} = \lambda_{1} G^{+}$.
  \end{enumerate}
\end{prop}
\begin{proof}By Lemma \ref{lemme:estimation-away-from-p-minus}, we have that there exists $C > 0$ such that
  over $W^-$, $\left| h \right| \leq C g_T$. We compute
  \begin{equation}
    \label{eq:6}
    h_{N} = \frac{1}{\lambda_{1}^{N-1}} h \circ (s^{\an})^{N-1} + \frac{\mu}{\lambda_{1}} h_{N-1}.
  \end{equation}
  Thus,
  \begin{equation}
    \label{eq:10}
    h_{N} = \sum_{\ell = 0}^{N-1} \frac{\mu^{\ell}}{\lambda_{1}^{N-1}} h \circ (s^{\an})^{N-1-\ell}.
  \end{equation}
  Now the proposition follows since $\left| h \right| \leq C g_{T}$ over $W^{-}$, $s^* \overline \sE_\cT = \overline
  \sE_\cT$ and $W^-$ is $s$-invariant, so the sum in \eqref{eq:10} is absolutely convergent with respect to the boundary
  topology.
\end{proof}
The same proof with $s^{-1}$ yields an open subset $U^{+}$ of $p_{+}$ in
$X_T^{\an}$ such that $h^{N}$ converges with respect to the boundary topology towards a continuous function
$h^{-}$ over $W^+ = X_T^{\an} \setminus U^+$ that satisfies analogous properties as $h^{+}$. In particular, we can
suppose that for every place $v$ outside $V[\fin]$ that
$U_{v}^{+} \cap U_{v}^{-} = \emptyset$. In particular, $U^- \cap U^+ = \emptyset$ and $W^+ \cup W^- = X_T^{\an}$. We can
shrink $U^+_v, U^-_v$ for $v \not \in V[\fin]$ even more such that $G^{\pm}_{|U_{v}^{\pm}} \geq 1$ because $G^{\pm} -
g_{\overline \sD^{\pm}}$ extends to a continuous function over $U^\pm_v$ and $\theta_{X_F}^{\pm}$ is effective.

\subsection{Convergence everywhere}
\label{subsec:conv-everywh}

Since $\supp \theta_{X_F}^- = X_F \setminus S_F$. We can find a boundary divisor $\overline \sD_0$ of $\sU_\cT$ in
$\sX_k$ such that $\sD_0$ is a model of $\theta_{X_F}^+$. Let $g_0$ be the Green function of $\overline \sD_0$. We
have in particular that for every place $v \not \in V[\fin], g_{0,v} > 0$ and that there exists a constant $A >0$ such
that $-A \overline \sD_0 \leq  \overline \sE_\cT \leq A \overline \sD_0$. Define the following constants
    \begin{equation}
    M_0 = \sup_{U_T^{\an}} \left|\frac{h}{g_0}\right|, \quad M_1 = \sup_{U_T^{\an, V[\fin]^c}}
    \left|\frac{g_T}{g_0}\right|,  \quad M_2 = \sup_{W^-} \left|\frac{ h }{g_T}\right|
     \end{equation}
     \begin{equation}
     M_3 = \sup_{U^-_{V[\fin]^c} \bigcap U_T^{\an}} \left|\frac{g_0}{G^-}\right|, \quad M_4 =
      \sup_{U^-_{V[\fin]^c} \bigcap U_T^{\an}} \left|\frac{G^-}{g_0}\right|
  \end{equation}
    where $V[\fin]^c$ is the set of places of $\K$ not in $V[\fin]$.

    \begin{claim}\label{claim:estimate}
      Set $M := \max (M_2 M_1, M_0 M_3 M_4, M_0)$, then for every $k \geq 0$
      \begin{equation}
      -M g_0 \leq h \circ s^k \leq M g_0
        \label{EqInequality}
      \end{equation}
      over $U_T^{\an}$.
    \end{claim}
    \begin{proof}
      We will write $s$ instead of $s^{\an}$ as to avoid heavy notations.
      Let $k \geq 0$ and $x \in U_T^{\an}$ and let $v$ be the place over which $x$ lies. Suppose first that $s^k (x)
      \in W^-_v$.
      If $v \in V[\fin]$, then $h (s^k (x)) = 0$ by Proposition \ref{prop:convergence-away-from-p-minus}
      and \eqref{EqInequality} is obvious. Otherwise we have
      \begin{equation}
        \left|\frac{h(s^k(x))}{g_0 (x)}\right| = \left| \frac{h(s^k(x))}{g_T(s^k(x))} \right| \cdot
        \left| \frac{g_T(x)}{g_0(x)} \right| \leq M_2 M_1
        \label{<+label+>}
      \end{equation}
      and \eqref{EqInequality} is satisfied.

      If $s^k(x) \not \in W^-_v$ then $x, s^k (x) \in U^-_v \subset W^+_v$. If $v \in V[\fin]$, then by
      Proposition \ref{prop:convergence-away-from-p-minus}
      \begin{equation}
        G^-_{|W^+_v} = g_{(\sX_v, \sD^-_v)} = g_{0,v},
      \end{equation}
      thus
      \begin{equation}
        \left|h(s^k (x))\right| \leq M_0 g_0 (s^k (x)) = \frac{M_0}{\lambda_1^k} g_0 (x).
        \label{<+label+>}
      \end{equation}

      Finally, if $v \not \in V[\fin]$, let $y = s^k (x)$, then
      \begin{equation}
        \left|\frac{h(s^k(x))}{g_0 (x)}\right|= \left|\frac{h(y)}{g_0 (s^{-k} (y))}\right| \leq M_4 \left|\frac{h(y)}{ G^-
        (s^{-k}(y))}\right| = M_4 \left|\frac{h(y)}{\lambda_1^k G^- (y)}\right|.        \label{<+label+>}
      \end{equation}
      Thus,
      \begin{equation}
        \left|\frac{h(s^k(x))}{g_0}\right| \leq \frac{M_4}{\lambda_1^k} \left|\frac{g_0(y)}{G^-(y)}\right|
        \left|\frac{h(y)}{g_0(y)}\right| \leq \frac{M_0 M_3 M_4}{\lambda_1^k}
        \label{<+label+>}
      \end{equation}
    \end{proof}
    With this estimate, we have that
    \begin{equation}
      h_N = \sum_{k=0}^{N-1} \frac{\mu^k}{\lambda_1^{N-1}} h \circ s^{N-1-k}
      \label{eq:series}
    \end{equation}
    converges over $U_T^{\an}$ with respect to the boundary topology because the sum in \eqref{eq:series} is absolutely
    convergent with respect to the boundaru topology. If $\left| \mu \right| < \left| \lambda \right|$,
    then $h_N$ converges to zero because in that case.
    \begin{align}
      (\mu \neq 1) \quad &\left| h_N \right| \leq \frac{M}{\lambda_1^{N-1}}\frac{\mu^N - 1}{\mu -1} \left| g_0 \right| \\
      (\mu = 1) \quad &\left| h_N \right| \leq \frac{MN}{\lambda_1^{N-1}} \left| g_0 \right|.
      \label{eq:<+label+>}
    \end{align}

    If $\mu = \lambda_1$ and $\overline \sD = \overline \sD^+$, then we call $\overline{\theta^+}(X_F)$ the limit. It
    satisfies $s^* \overline{\theta^+}(X_F) = \lambda_1 \overline{\theta^+}(X_F)$ and it depends only on $X_F$ by Lemma
    \ref{lemme:convergence-diviseur-verticaux}. Its Green function coincides with the function $G^+$ from Proposition
    \ref{prop:convergence-away-from-p-minus} and we have from Claim \ref{claim:estimate}
    \begin{equation}
      - M \overline \sD_0 \leq \overline{\theta^+}(X_F) - \overline \sD^+ \leq M \overline D_0.
      \label{eq:<+label+>}
    \end{equation}

\subsection{End of proof of Theorem \ref{thm:invariant-adelic-divisor}}
\label{subsec:end-proof-theorem}
We denote now $\lambda_1$ for $\lambda_1(f)$.

\begin{prop}\label{prop:convergence-pour-cartier-pour-s}
  Let $X_F$ be a completion of $S_F$ that satisfies Proposition \ref{prop:dynamics-loxodromic-automorphism} and let $s =
  f^{N_0}$ be an iterate of $f$ that contracts $X_F \setminus S_F$.
  If $D \in \DivInf(X_F)_{\R}$ and $\overline \sD$ is a model adelic extension of $D$ then
  \begin{equation}
    \frac{1}{\lambda_{1}(s)^{N}} (s^{N})^{*} \overline \sD \rightarrow (D \cdot \theta^{-}) \overline \theta^{+}(X_F).
  \end{equation}
\end{prop}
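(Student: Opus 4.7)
The strategy is to decompose $D$ using the eigenbasis of $s_{X_F}^*$ provided by Proposition~\ref{prop:operateur-pull-back} and to analyze each component separately via Propositions~\ref{prop:convergence-diviseur-propre-model} and~\ref{prop:convergence-vertical-functions}. Write $\lambda:=\lambda_1(s)=\lambda_1(f)^{N_0}$. According to whether $\lambda_1(f)\in\Z$ or not, decompose
\[
D \;=\; \alpha\,\theta_{X_F}^+ + R \qquad\text{or}\qquad D \;=\; \alpha\,\theta_{X_F}^+ + \beta\, D^- + R,
\]
with $\alpha,\beta\in\R$ and $R\in\DivInf(X_F)_\R$ satisfying $p_+\notin\Supp R$. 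Pairing with $\theta^-$ and using $\theta^+\cdot\theta^-=1$, together with $R\cdot\theta^-=0$ from Proposition~\ref{prop:operateur-pull-back}(1) and $D^-\cdot\theta^-=0$ from Proposition~\ref{prop:operateur-pull-back}(3)(b), identifies the coefficient $\alpha=D\cdot\theta^-$.

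Next, choose model adelic extensions $\overline{\cT}$, $\overline{\sD^-}$, $\overline{\sR}$ of $\theta_{X_F}^+$, $D^-$, $R$ respectively, and define
\[
\overline M \;:=\; \overline \sD - \alpha\,\overline\cT - \beta\,\overline{\sD^-} - \overline\sR
\]
(with $\beta=0$ in the integer case). Since the underlying horizontal divisors cancel by construction, $\overline M$ is a vertical model adelic divisor supported at infinity. Applying $\frac{1}{\lambda^N}(s^N)^*$ to this decomposition and passing to the limit term by term:
\begin{itemize}
  \item $\frac{1}{\lambda^N}(s^N)^*\overline\cT \to \overline\theta^+(X_F)$ by Proposition~\ref{prop:convergence-diviseur-propre-model}(2), since $s_{X_F}^*\theta_{X_F}^+=\lambda\,\theta_{X_F}^+$.
  \item $\frac{1}{\lambda^N}(s^N)^*\overline{\sD^-}\to 0$ by Proposition~\ref{prop:convergence-diviseur-propre-model}(1), since $s_{X_F}^*D^-=\lambda^{-1}D^-$ and $|\lambda^{-1}|<\lambda$.
  \item $\frac{1}{\lambda^N}(s^N)^*\overline\sR\to 0$ by Proposition~\ref{prop:convergence-diviseur-propre-model}(1) applied with $\mu=0$, using $s_{X_F}^*R=0$ from Proposition~\ref{prop:operateur-pull-back}(1); by linearity it suffices to treat each prime component $E$ of $R$ separately.
  \item $\frac{1}{\lambda^N}(s^N)^*\overline M\to 0$ by Proposition~\ref{prop:convergence-vertical-functions}.
\end{itemize}
Summing, $\frac{1}{\lambda^N}(s^N)^*\overline\sD \to \alpha\,\overline\theta^+(X_F)=(D\cdot\theta^-)\,\overline\theta^+(X_F)$, which is the claim.

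\textbf{Main obstacle.} The only delicate point is the applicability of Proposition~\ref{prop:convergence-diviseur-propre-model} to $R$, which is not a single prime but a linear combination of eigenvectors for the eigenvalue $0$; this is resolved by linearity. One also checks that the limit $\overline\theta^+(X_F)$ does not depend on the chosen model extension $\overline\cT$, which follows from applying Proposition~\ref{prop:convergence-vertical-functions} to the vertical difference of any two such extensions.
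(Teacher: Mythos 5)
Your proof is correct and follows essentially the same route as the paper: decompose $D$ in the eigenbasis from Proposition~\ref{prop:operateur-pull-back}, identify the $\theta_{X_F}^+$-coefficient as $D\cdot\theta^-$ by intersecting with $\theta^-$, absorb the discrepancy between $\overline\sD$ and the chosen model extensions into a vertical piece, and pass to the limit term by term via Propositions~\ref{prop:convergence-vertical-functions} and~\ref{prop:convergence-diviseur-propre-model}. The paper applies Proposition~\ref{prop:convergence-diviseur-propre-model} directly to $R$ (since $s_{X_F}^*R=0$, i.e.\ $\mu=0$), so your refinement of treating prime components separately, while harmless, is not needed.
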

\begin{proof}
If $\lambda_{1} \in \Z$,
then set $D^{-} = 0$. By Proposition \ref{prop:operateur-pull-back}, we can write
\begin{equation}
  \label{eq:13}
  D = a \theta_{X_F}^{+} + b D^{-} + R
\end{equation}
where $a,b \in \R$ and $p_+ \not \in \Supp R$. Intersecting \eqref{eq:13} with $\theta_{X_F}^{-}$ we get $a = D \cdot \theta_{X_F}^{-} = D \cdot
\theta^{-}$. Let $\sX_{k}$ be a projective model of $X_F$ over $k$ where $\sD$ is defined. We assume that there exists a
projective model $B_k$ of $F$ over $k$ with a morphism $\sX_k \rightarrow \sB_k$ with generic fiber $X_F \rightarrow
\spec F$. Write $\sD^+, \sD^-,
\sR$ for the horizontal divisor in $\sX_{k}$ defined by $\theta_{X_F}^+, D^-,
R$ respectively. Then, $\overline \sD$ is of the form
\begin{equation}
  \label{eq:14}
  \overline \sD = (D \cdot \theta^{-}) \overline{ \sD^{+}} + b \overline{ \sD^{-}} + \overline \sR + \overline M
\end{equation}
where $\overline M$ is a model vertical adelic divisor. By
Lemma \ref{lemme:convergence-diviseur-verticaux} and Proposition \ref{prop:convergence-diviseur-propre-model}, we get that
\begin{equation}
  \frac{1}{\lambda_1(s)^N} (s^N)^* \overline \sD \rightarrow (D \cdot \theta^-) \overline \theta^+ (X_F).
  \label{eq:convergence-for-iterate}
\end{equation}
\end{proof}

\begin{prop}\label{prop:convergence-pour-cartier-trivial-intersection}
  Let $X_F$ be any completion of $S_F$ and $D \in \DivInf(X_F)_\R$ such that $D \cdot \theta^- = 0$, then for any model
  adelic extension $\overline \sD$ of $D$ we have
  \begin{equation}
    \frac{1}{\lambda_1^N} (f^N)^* \overline \sD \rightarrow 0.
    \label{eq:<+label+>}
  \end{equation}
\end{prop}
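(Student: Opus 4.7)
The plan is to bootstrap from Proposition~\ref{prop:convergence-pour-cartier-pour-s}, which handles the specific iterate $s = f^{N_0}$ on a completion $Y_F$ satisfying Proposition~\ref{prop:dynamics-loxodromic-automorphism}, back to the general statement for $f$ on an arbitrary completion $X_F$. First I would reduce to a good completion: since the completions satisfying Proposition~\ref{prop:dynamics-loxodromic-automorphism} are cofinal, pick such a $Y_F$ above $X_F$ with birational morphism $\pi : Y_F \to X_F$, and let $\sY_{B_\K}$ be a projective model dominating $\sX_{B_\K}$ and extending $\pi$. Set $\overline \sE := \pi^* \overline \sD$ with underlying divisor $E = \pi^* D \in \DivInf(Y_F)_\R$. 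Under the direct limit identifications defining $\hat \DivInf(S_F / k)_{\mod}$, the divisors $\overline \sE$ and $\overline \sD$ represent the same element, so it suffices to prove $\lambda_1^{-N}(f^N)^*\overline \sE \to 0$. By the projection formula on $\overline \cNS(S_F)$ one has $E \cdot \theta^- = D \cdot \theta^- = 0$.

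Next I would use Euclidean division: write $N = k N_0 + r$ with $0 \leq r < N_0$. Since $s$ and $f$ commute, $(f^N)^* = (f^r)^* \circ (s^k)^*$, hence
$$ \frac{1}{\lambda_1^N} (f^N)^* \overline \sE \;=\; \frac{1}{\lambda_1^r}\, (f^r)^* \left( \frac{1}{\lambda_1(s)^k} (s^k)^* \overline \sE \right). $$
Applying Proposition~\ref{prop:convergence-pour-cartier-pour-s} to $Y_F$ and $\overline \sE$ yields
$$ \frac{1}{\lambda_1(s)^k} (s^k)^* \overline \sE \;\xrightarrow[k \to +\infty]{}\; (E \cdot \theta^-)\, \overline \theta^+(Y_F) \;=\; 0 $$
in $\hat \DivInf(S_F / k)$.

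Finally I would conclude by continuity: the operator $(f^r)^*$ is continuous on $\hat \DivInf(S_F / k)$ for the boundary topology. This follows from the functoriality statement of Proposition~\ref{prop:functoriality}, together with the routine observation that if $\overline \sD_0$ is a boundary divisor then $(f^r)^* \overline \sD_0$ is a model adelic divisor at infinity bounded above and below by a constant multiple of some other boundary divisor, so that $(f^r)^*$ sends Cauchy sequences to Cauchy sequences. Hence for each fixed $r \in \{0,\dots,N_0-1\}$ the subsequence indexed by $k$ tends to $0$; since $\lambda_1^{-r}$ is bounded and there are only $N_0$ residue classes of $N$ modulo $N_0$, the full sequence $\lambda_1^{-N}(f^N)^*\overline \sD$ converges to $0$. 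The only mildly delicate step is the continuity of $(f^r)^*$ in the boundary topology; all the dynamical substance is already contained in Proposition~\ref{prop:convergence-pour-cartier-pour-s}.
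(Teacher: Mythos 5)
Your proof is correct and follows essentially the same route as the paper: reduce to a good completion, apply Proposition~\ref{prop:convergence-pour-cartier-pour-s} to the iterate $s = f^{N_0}$, then use Euclidean division $N = kN_0 + r$ and handle the finite remainders $r \in \{0,\dots,N_0-1\}$. The only difference is cosmetic: where you invoke continuity of each $(f^r)^*$ in the boundary topology, the paper unpacks the same boundedness explicitly by squeezing between $-\epsilon_{n_k}\overline\sD_0$ and $\epsilon_{n_k}\overline\sD_0$ and using a single constant $A$ with $\frac{1}{\lambda_1^\ell}(f^\ell)^*\overline\sD_0 \leq A\overline\sD_0$ for $\ell < N_0$. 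In fact you got the order of the composition right — one must apply $(s^k)^*$ inside and $(f^r)^*$ outside so the small bound propagates through a fixed bounded operator — whereas the paper's displayed equation writes the factorisation the other way round, even though the conclusion drawn from it is exactly the one your ordering yields.
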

\begin{proof}
  We can suppose that $X_F$ satisfies Proposition \ref{prop:dynamics-loxodromic-automorphism}. Let $s = f^{N_0}$ be an
  iterate of $f$ that contracts the whole boundary $X_F \setminus S_F$, then we have by Proposition
  \ref{prop:convergence-pour-cartier-pour-s}
  \begin{equation}
    \frac{1}{\lambda_1^{N_0 k}} (s^k)^* \overline \sD \rightarrow 0.
    \label{eq:<+label+>}
  \end{equation}
  Therefore, there exists a sequence of positive numbers $\epsilon_k \rightarrow 0$ such that
  \begin{equation}
    - \epsilon_k \overline \sD_0 \leq \frac{1}{\lambda_1^{N_0 k}} (s^k)^* \overline \sD \leq \epsilon_k \overline \sD_0
    \label{eq:epsilon-k}
  \end{equation}
  where $\overline \sD_0$ is a boundary divisor. We can also assume without loss of generality that $- \overline \sD_0
  \leq \overline \sD \leq \overline \sD_0$. Let also $A > 0$ be a constant such that for every $\ell = 0, \dots,
  N_0 - 1$,
  \begin{equation}
    0 \leq \frac{1}{\lambda_1^\ell} (f^\ell)^* \overline \sD_0 \leq A \overline \sD_0.
    \label{eq:constante-A}
  \end{equation}
  For every $k \geq 1$, write $k = n_k N_0 + r_k$ the Euclidian division of $k$ by $N_0$. We have
  \begin{equation}
    \frac{1}{\lambda_1^k} (f^k)^* \overline \sD = \frac{1}{\lambda_1 (s)^{n_k}} (s^{n_k})^*
    \left(\frac{1}{\lambda_1^{r_k}} (f^{r_k})^* \overline \sD\right)
    \label{eq:<+label+>}
  \end{equation}
  and therefore by \eqref{eq:epsilon-k} and \eqref{eq:constante-A} we have
  \begin{equation}
    - A \epsilon_{n_k} \overline \sD_0 \leq \frac{1}{\lambda_1^k}(f^k)^* \overline \sD \leq A \epsilon_{n_k} \overline \sD_0.
    \label{eq:<+label+>}
  \end{equation}
  which shows the result.
\end{proof}

\begin{prop}\label{prop:convergence-pour-cartier}
  Let $X_F$ be any completion of $S_F$. If $D \in \DivInf(X_F)_{\R}$ and $\overline \sD$ is a model adelic extension of $D$, then
  \begin{equation}
    \frac{1}{\lambda_{1}^{N}} (f^{N})^{*} \overline \sD \rightarrow (D \cdot \theta^{-}) \overline \theta^{+}(X_F).
  \end{equation}
\end{prop}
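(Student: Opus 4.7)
The plan is to combine the decomposition of Proposition \ref{prop:operateur-pull-back} with the already-established special cases: Proposition \ref{prop:convergence-pour-cartier-pour-s} (convergence for the iterate $s = f^{N_0}$ on a good completion), Proposition \ref{prop:convergence-pour-cartier-trivial-intersection} (vanishing when $D \cdot \theta^- = 0$), and Proposition \ref{prop:convergence-vertical-functions} (vanishing of model vertical divisors). A Euclidean-division trick will then upgrade the statement from $s$ to $f$.

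First I would reduce to the case where $X_F$ satisfies Proposition \ref{prop:dynamics-loxodromic-automorphism}. Given an arbitrary completion $X_F$, choose a completion $Y_F$ above $X_F$ that satisfies Proposition \ref{prop:dynamics-loxodromic-automorphism}, and let $\pi \colon Y_F \to X_F$ be the resulting morphism of completions. Then $\pi^*\overline \sD$ is a model adelic extension of $\pi^* D \in \DivInf(Y_F)_\R$, the projection formula gives $\pi^*D \cdot \theta^- = D \cdot \theta^-$, and $\pi^*\overline \sD$ and $\overline \sD$ represent the same class in $\hat\Div_\infty(S_F/k)$ by definition of the direct limit. So I may assume $X_F$ is a good completion. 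Using $\theta_{X_F}^+ \cdot \theta^- = 1$ together with Proposition \ref{prop:operateur-pull-back}, write
\[
D = (D \cdot \theta^-)\,\theta_{X_F}^+ + D', \qquad D' \in \DivInf(X_F)_\R,\quad D' \cdot \theta^- = 0,
\]
and correspondingly
\[
\overline \sD = (D \cdot \theta^-)\,\overline \sD^+ + \overline \sD' + \overline M,
\]
where $\overline \sD^+$, $\overline \sD'$ are model extensions of $\theta_{X_F}^+$ and $D'$, and $\overline M$ is a model vertical adelic divisor. Proposition \ref{prop:convergence-pour-cartier-trivial-intersection} yields $\lambda_1^{-N}(f^N)^*\overline \sD' \to 0$ and Proposition \ref{prop:convergence-vertical-functions} yields $\lambda_1^{-N}(f^N)^*\overline M \to 0$, both in the boundary topology.

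It therefore suffices to show $\lambda_1^{-N}(f^N)^*\overline \sD^+ \to \overline \theta^+(X_F)$. Let $s = f^{N_0}$ be the iterate chosen in Proposition \ref{prop:convergence-pour-cartier-pour-s}, so $\lambda_1(s) = \lambda_1^{N_0}$. For each $k \in \{0,1,\dots,N_0-1\}$ and each integer $m \geq 0$, apply Euclidean division to write $N = N_0 m + k$ and compute
\[
\frac{1}{\lambda_1^{N}}(f^{N})^* \overline \sD^+ \;=\; \frac{1}{\lambda_1(s)^m}(s^m)^* \overline{\sD_k}, \qquad \overline{\sD_k} := \frac{1}{\lambda_1^k}(f^k)^* \overline \sD^+.
\]
Because $f^*_{X_F}\theta_{X_F}^+ = \lambda_1 \theta_{X_F}^+$ (Proposition \ref{prop:dynamics-loxodromic-automorphism}(6)), the horizontal part of $\overline{\sD_k}$ equals $\theta_{X_F}^+$ modulo a model vertical adelic divisor that is absorbed using Proposition \ref{prop:convergence-vertical-functions}. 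Proposition \ref{prop:convergence-pour-cartier-pour-s} applied to $\overline{\sD_k}$ then gives convergence of the $k$-th subsequence to $(\theta_{X_F}^+ \cdot \theta^-)\overline \theta^+(X_F) = \overline \theta^+(X_F)$ in the boundary topology. Since there are only $N_0$ such subsequences and each converges to the same limit in the boundary topology, the full sequence $\lambda_1^{-N}(f^N)^*\overline \sD^+$ converges to $\overline \theta^+(X_F)$, and combining with the vanishing contributions the proposition follows.

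The main obstacle is Step~3: justifying that $\overline{\sD_k}$ can indeed be written as a model extension of $\theta_{X_F}^+$ plus a model vertical adelic divisor (after pushing down to, or resolving indeterminacies of $f^k$ above, $X_F$), and that the vertical correction together with the uniform $N_0$-periodicity of the Euclidean remainder is compatible with the boundary-topology convergence. Once the bookkeeping is settled, the rest reduces cleanly to the results proven earlier in the section.
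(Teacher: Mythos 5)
Your overall skeleton — reduce to a good completion, use the Euclidean-division trick to pass from $s = f^{N_0}$ (Proposition \ref{prop:convergence-pour-cartier-pour-s}) to $f$, and absorb corrections via Proposition \ref{prop:convergence-pour-cartier-trivial-intersection} and Proposition \ref{prop:convergence-vertical-functions} — is the same as the paper's. But the step you yourself flag as ``the main obstacle'' is not a bookkeeping nuisance; the claim you would need there is false.

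You want to say that the horizontal part of $\overline{\sD_k} := \lambda_1^{-k}(f^k)^*\overline\sD^+$ equals $\theta_{X_F}^+$ up to a model \emph{vertical} adelic divisor. This cannot hold. The divisor $\lambda_1^{-k}(f^k)^*\theta_{X_F}^+$ lives on a completion $Y_F$ resolving the indeterminacies of $f^k$, with a morphism $\pi : Y_F \to X_F$. Algebraic stability $(f_{X_F}^*)^k\theta_{X_F}^+ = \lambda_1^k\theta_{X_F}^+$ only tells you that $\pi_*\bigl((f^k)^*\theta_{X_F}^+\bigr) = \lambda_1^k\theta_{X_F}^+$; it does \emph{not} give $(f^k)^*\theta_{X_F}^+ = \pi^*(\lambda_1^k\theta_{X_F}^+)$ as divisors on $Y_F$. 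The difference
\begin{equation}
  \lambda_1^{-k}(f^k)^*\theta_{X_F}^+ \;-\; \pi^*\theta_{X_F}^+
\end{equation}
is a nonzero $\pi$-exceptional divisor at infinity — a \emph{horizontal} divisor on any model over $k$, not a vertical one. (If it were zero for all $k$, passing to the limit would force $\theta_{X_F}^+ = \theta^+$ in $\overline\Cinf(S_F)$, which is false in general since $\theta^+$ is only a limit of Cartier classes.) So Proposition \ref{prop:convergence-vertical-functions} cannot absorb this correction, and your argument for the $\overline\sD^+$-piece does not close.

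The paper avoids this by never isolating $\overline\sD^+$: for each residue $\ell\in\{0,\dots,N_0-1\}$ it considers
\begin{equation}
  \overline\sD_\ell := \overline\sD - \tfrac{1}{\lambda_1^\ell}(f^\ell)^*\overline\sD ,
\end{equation}
computes, using $f_*\theta^- = \lambda_1\theta^-$ and the projection formula, that $D_\ell\cdot\theta^- = D\cdot\theta^- - \lambda_1^{-\ell}D\cdot(f^\ell)_*\theta^- = 0$, and invokes Proposition \ref{prop:convergence-pour-cartier-trivial-intersection} to conclude $\lambda_1^{-N_0 n}(s^n)^*\overline\sD_\ell\to 0$. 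Combined with Proposition \ref{prop:convergence-pour-cartier-pour-s} for $\overline\sD$ itself along multiples of $N_0$, this gives convergence of every residue-class subsequence, hence of the whole sequence. You can repair your Step~4 the same way: show $\bigl(D_k - \pi^*\theta_{X_F}^+\bigr)\cdot\theta^- = 0$ and apply Proposition \ref{prop:convergence-pour-cartier-trivial-intersection} to $\overline{\sD_k} - \pi^*\overline\sD^+$; but note that this makes your initial decomposition $(D\cdot\theta^-)\overline\sD^+ + \overline\sD' + \overline M$ redundant, since the whole argument then reduces to the paper's treatment of $\overline\sD - \lambda_1^{-\ell}(f^\ell)^*\overline\sD$ directly.
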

\begin{proof}
  We can suppose that $X_F$ satisfies Proposition \ref{prop:dynamics-loxodromic-automorphism}. Let $s = f^{N_0}$ be an
  iterate of $f$ that contracts the boundary $X_F \setminus S_F$. Then, we have
  \begin{equation}
    \frac{1}{\lambda_1^{N_0 k}} (s^k)^* \overline \sD \rightarrow (D \cdot \theta^-) \overline \theta^+ (X_F).
    \label{eq:convergence-for-iterate}
  \end{equation}
  Now, for every $k \geq 1$, write $k = n_k N_0 + r_k $ the Euclidian division of $k$ by $N_0$, we have
  \begin{equation}
    \frac{1}{\lambda_1^k} (f^k)^* \overline \sD = \frac{1}{\lambda_1^{N_0 n_k}} (s^{n_k})^* \left(
    \frac{1}{\lambda_1^{r_k}} (f^{r_k})^* \overline \sD \right).
    \label{eq:<+label+>}
  \end{equation}
  So to show the proposition, we only need to prove that \eqref{eq:convergence-for-iterate} holds for
  $\frac{1}{\lambda_1^\ell} (f^\ell)^* \overline \sD$ for $\ell = 0, \dots, N_0 -1$. Now, define the model adelic divisor
  \begin{equation}
    \overline \sD_\ell := \overline \sD - \frac{1}{\lambda_1^\ell} (f^\ell)^* \overline \sD.
    \label{eq:<+label+>}
  \end{equation}
  It satisfies $D_\ell \cdot \theta^- = 0$, therefore by Proposition
  \ref{prop:convergence-pour-cartier-trivial-intersection}, we have
  \begin{equation}
    \frac{1}{\lambda_1^{N_0 k}} (s^k)^* \overline \sD_\ell \rightarrow 0
    \label{eq:<+label+>}
  \end{equation}
  and the proposition is shown.
\end{proof}

\begin{prop}
  The adelic divisor $\overline \theta^{+} := \overline \theta^{+}(X_F)$ does not depend on $X_F$. It is strongly nef and
  effective and for every integrable adelic divisor $\overline D$ over $S_F$, one has
  \begin{equation}
    \label{eq:16}
    \frac{1}{\lambda_{1}^{N}} \left(f^{N}\right)^{*} \overline D \rightarrow (D \cdot \theta^{-}) \overline \theta^{+}.
  \end{equation}
\end{prop}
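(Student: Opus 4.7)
The plan is to establish the three assertions in turn using Proposition \ref{prop:convergence-pour-cartier} (which identifies $\overline\theta^+(X_F)$ as the limit of $(1/\lambda_1^N)(f^N)^*\overline\sD$ for suitable model adelic extensions) together with the vanishing Proposition \ref{prop:convergence-vertical-functions} for vertical models and Proposition \ref{prop:functoriality} for the preservation of semipositivity under pullback.

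For the independence of $\overline\theta^+(X_F)$ on $X_F$, I would take two completions $X_F, X_F'$ of $S_F$ satisfying Proposition \ref{prop:dynamics-loxodromic-automorphism} and choose a common completion $Y_F$ above both, which may be assumed to satisfy the same proposition. Let $\overline\sD, \overline\sD'$ be model adelic extensions of $\theta^+_{X_F}, \theta^+_{X_F'}$ and pull them back to a projective model $\sY$ of $Y_F$ lying above chosen projective models of $X_F$ and $X_F'$. Since $\theta^+$ is Cartier on all three completions, functoriality of Cartier pullback in $\cNS(S_F)$ gives $\pi^*\theta^+_{X_F} = \theta^+_{Y_F} = (\pi')^*\theta^+_{X_F'}$, so the two pullbacks share the same horizontal divisor and their difference is a model vertical adelic divisor. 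Proposition \ref{prop:convergence-vertical-functions} then yields $(1/\lambda_1^N)(f^N)^*(\overline\sD - \overline\sD') \to 0$, hence $\overline\theta^+(X_F) = \overline\theta^+(X_F')$. The same argument with $X_F = X_F'$ proves independence from the choice of model adelic extension.

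For strong nefness and effectiveness, by Goodman's theorem there exists an ample divisor $H$ on $X_F$ with $\Supp H = X_F \setminus S_F$; equipping it with the Weil metric associated to a projective embedding gives a semipositive model adelic extension $\overline H$ with nonnegative Green function. Since $H$ is ample and $\theta^-$ is nonzero and effective, $H \cdot \theta^- > 0$. Proposition \ref{prop:convergence-pour-cartier} then gives
\begin{equation}
\frac{1}{\lambda_1^N}(f^N)^*\overline H \longrightarrow (H \cdot \theta^-)\,\overline\theta^+,
\end{equation}
and each term is semipositive with nonnegative Green function by Proposition \ref{prop:functoriality}. This displays $\overline\theta^+$ as a Cauchy limit of semipositive model adelic divisors with nonnegative Green functions, so after rescaling it is strongly nef and effective.

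For the limit formula on an integrable $\overline D$, I would write $\overline D = \lim_k \overline\sD_k$ as a Cauchy sequence with $-\epsilon_k\overline\sD_0 \leq \overline D - \overline\sD_k \leq \epsilon_k\overline\sD_0$ for a boundary divisor $\overline\sD_0$ and $\epsilon_k \to 0$. For each fixed $k$, Proposition \ref{prop:convergence-pour-cartier} gives $\lim_N (1/\lambda_1^N)(f^N)^*\overline\sD_k = (D_k \cdot \theta^-)\overline\theta^+$, while pulling back the Cauchy estimate yields
\begin{equation}
-\epsilon_k \frac{1}{\lambda_1^N}(f^N)^*\overline\sD_0 \leq \frac{1}{\lambda_1^N}(f^N)^*(\overline D - \overline\sD_k) \leq \epsilon_k \frac{1}{\lambda_1^N}(f^N)^*\overline\sD_0.
\end{equation}
The hard part is controlling $(1/\lambda_1^N)(f^N)^*\overline\sD_0$ uniformly in $N$ by a fixed boundary divisor $C\overline\sB$, so the estimate survives the limit in $N$; this follows because $(1/\lambda_1^N)(f^N)^*\overline\sD_0$ is itself a Cauchy sequence in $\hat\Div_\infty(S_F/k)$ by Proposition \ref{prop:convergence-pour-cartier} and hence bounded in the boundary topology. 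Combining these bounds shows that $(1/\lambda_1^N)(f^N)^*\overline D$ is Cauchy; its limit lies within $C\epsilon_k\overline\sB$ of $(D_k \cdot \theta^-)\overline\theta^+$, and letting $k \to \infty$ together with continuity of the intersection pairing $D \mapsto D \cdot \theta^-$ on $\overline\Cinf(S_F)$ produces the desired limit $(D \cdot \theta^-)\overline\theta^+$.
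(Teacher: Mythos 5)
Your treatment of the second part (strong nefness and effectiveness via Goodman's ample divisor) and third part (limit formula via pulling back the Cauchy inequality and using uniform boundedness of $\frac{1}{\lambda_1^N}(f^N)^*\overline\sD_0$) is sound and matches the paper's approach in substance.

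The gap is in the $X_F$-independence step. You assert that ``$\theta^+$ is Cartier on all three completions'' so that $\pi^*\theta^+_{X_F} = \theta^+_{Y_F} = (\pi')^*\theta^+_{X_F'}$, and from this deduce that $\pi^*\overline\sD - (\pi')^*\overline\sD'$ is vertical. But $\theta^+$ is only known to lie in $\overline\Cinf(S_F)$, the completion of $\Cinf(S_F)$ for the strong topology (Theorem \ref{thm:dynamic-automorphism-picard-manin}); it is not Cartier in general. For a non-Cartier Weil class, the compatibility between incarnations under a morphism of completions $\pi: Y_F \to X_F$ is the \emph{pushforward} relation $\pi_*\theta^+_{Y_F} = \theta^+_{X_F}$, and $\pi^*\theta^+_{X_F}$ can genuinely differ from $\theta^+_{Y_F}$ by a $\pi$-exceptional divisor at infinity with nonzero coefficient; in particular $\pi^*\theta^+_{X_F}$ need not be an eigenvector of $f_Y^*$ even when $\theta^+_{X_F}$ is an eigenvector of $f_X^*$ (algebraic stability over $Y_F$ gives $(f^n)_Y^* = (f_Y^*)^n$, not $f_Y^*\pi^* = \pi^* f_X^*$). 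So the difference of horizontal parts is not obviously zero and Proposition \ref{prop:convergence-vertical-functions} does not apply. The paper sidesteps this entirely: Proposition \ref{prop:convergence-pour-cartier} says the limit depends only on the intersection number $D\cdot\theta^-$, and the projection formula gives
\begin{equation}
\pi^*\theta^+_{X_F}\cdot\theta^- = \pi^*\theta^+_{X_F}\cdot\theta^-_{Y_F} = \theta^+_{X_F}\cdot\pi_*\theta^-_{Y_F} = \theta^+_{X_F}\cdot\theta^-_{X_F} = 1 = \theta^+_{Y_F}\cdot\theta^-,
\end{equation}
which is much weaker than the divisor-level equality you claim but is exactly what is needed. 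You should replace the Cartier-ness assertion by this intersection computation.
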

\begin{proof}
  The subset of completions of $S_F$ satisfying Proposition \ref{prop:dynamics-loxodromic-automorphism} is cofinal
in the set of completions of $S_F$. Thus, it suffices to prove that
$\overline \theta^{+}(X_F) = \overline \theta^{+}(Y_F)$ for any completion $Y_F$
above $X_F$ satisfying Proposition \ref{prop:dynamics-loxodromic-automorphism}. Let $\pi : Y_F \rightarrow X_F$ be the
morphism of completions, we have
\begin{equation}
  \pi^* \theta_{X_F}^+ \cdot \theta^- = \pi^* \theta_{X_F}^+ \cdot \theta^-_{Y_F} = \theta_{X_F}^+ \cdot \pi_*
  \theta_{Y_F}^- = \theta_{X_F}^+ \cdot \theta_{X_F}^- = \theta_{X_F}^- \cdot \theta^-.
\end{equation}
Applying Proposition \ref{prop:convergence-pour-cartier} we get that $\overline
\theta^{+}(X_F) = \overline \theta^{+} (Y_F)$.

We show that $\overline \theta^{+}$ is strongly nef and effective. By Goodman's
theorem in \cite{goodmanAffineOpenSubsets1969}, there exists an ample effective
divisor $H$ on $X$ such that $\Supp H = \BD$. Let $\overline H$ be a semipositive and effective extension of $H$, since
$H$ is ample, we have $H \cdot \theta^{-} > 0$ and by Proposition \ref{prop:convergence-pour-cartier} applied with
$\overline H$, we get that $\overline \theta^{+}$ is strongly nef and effective.

Finally, let $\overline D$ be any adelic divisor, let $\overline D_{0}$ be a
boundary divisor. For any $\epsilon >0$, there exists a model adelic divisor $\overline \sD_{\epsilon}$ such that
\begin{equation}
  \label{eq:17}
  \overline \sD_{\epsilon} - \epsilon \overline D_{0} \leq \overline D \leq \overline \sD_{\epsilon} + \epsilon \overline D_{0}.
\end{equation}
Since $f^{*}$ preserves effectiveness, letting $\epsilon \rightarrow 0$ and using the fact that $D_{\epsilon}\cdot \theta^{-} \rightarrow D \cdot \theta^{-}$, we get the result.
\end{proof}

\begin{rmq}\label{rmq:local-setting}
  If $K_v$ is a complete field with respect to an absolute value and $U_{K_v}$ is a normal affine surface over
  $K_v$. We can define the notion of arithmetic divisor over $U_{K_v}$ (see \S 3.6 of \cite{yuanAdelicLineBundles2023}).
  The same proof as in this section shows that if $f$ is a loxodromic automorphism of $U_{K_v}$, then there exists two
  arithmetic divisors $\overline \theta^+, \overline \theta^-$ over $U_{K_v}^{\an}$ unique up to multiplication by a
  positive constant.
\end{rmq}

\begin{prop}\label{prop:properties-green-functions}
  Let $w \in (\spec F)^{\an}$, let $G^+$ be the Green function of $\overline \theta^+$ over $S_F^{\an,w}$, then
  \begin{enumerate}
    \item $G^+ \geq 0$.
    \item $G^+ \circ f^{\an} = \lambda_1 G^+$.
    \item $G^+ (x) = 0$ if and only if the forward $f^{\an}$ orbit of $x$ is bounded (i.e relatively compact in
      $S_F^{\an,w}$).
    \item $G^+$ is plurisubharmonic and pluriharmonic over the set $\left\{ G^+ > 0
      \right\}$.
    \item If $X_F$ is a completion of $S_F$ that satisfies Proposition \ref{prop:dynamics-loxodromic-automorphism}, then
      for any Green function $g$ of $\theta_{X_F}^+, G^+ - g$ extends to a continuous function over $X_F^{\an,w}
      \setminus \left\{ p_- \right\}$.
    \item Similar properties hold for $G^-$ and the set $G^+ + G^- = 0$ is a compact subset of $S_F^{\an, w}$.
  \end{enumerate}
\end{prop}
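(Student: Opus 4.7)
The plan is to derive each item from the construction of $\overline\theta^+$ in Theorem~\ref{thm:invariant-adelic-divisor} and the local convergence estimates of Propositions~\ref{prop:convergence-away-from-p-minus} and~\ref{prop:convergence-pour-cartier-pour-s}. Throughout I fix a completion $X_F$ of $S_F$ satisfying Proposition~\ref{prop:dynamics-loxodromic-automorphism}, together with a model adelic extension $\overline\sD^+$ of $\theta_{X_F}^+$ used in the construction of $\overline\theta^+$. Items (1) and (2) are immediate. For (1), $\overline\theta^+$ is strongly nef and effective, so its Green function is nonnegative. For (2), the identity $f^*\overline\theta^+ = \lambda_1\overline\theta^+$ in $\hat\Div_\infty(S_F/k)$ translates via Proposition~\ref{prop:functoriality} into the functional equation $G^+\circ f^{\an} = \lambda_1 G^+$.

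For (5), the convergence analysis of \S\ref{subsec:conv-away-from} shows that $\lambda_1^{-N} g_{\overline\sD^+}\circ s^N$ converges uniformly on any set $W^-$ which is the complement of an arbitrary small neighborhood of $p_-$, and that on such a set $G^+ = g_{\overline\sD^+} + h^+$ with $h^+$ continuous. Since any two Green functions of $\theta_{X_F}^+$ differ by a globally continuous function on $X_F^{\an,w}$, the difference $G^+ - g$ extends continuously to $W^-$ for every choice of Green function $g$. Letting the neighborhood of $p_-$ shrink yields continuity on $X_F^{\an,w}\setminus\{p_-\}$. Combined with $\Supp \theta_{X_F}^+ = X_F\setminus S_F$ from Proposition~\ref{prop:dynamics-loxodromic-automorphism}, this proves that $G^+$ extends to a $[0,+\infty]$-valued continuous function on $X_F^{\an,w}\setminus\{p_-\}$ that equals $+\infty$ on every boundary point other than $p_-$. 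Applying the same analysis to $f^{-1}$ establishes the analog for $G^-$, which extends similarly on $X_F^{\an,w}\setminus\{p_+\}$ and blows up on every boundary point other than $p_+$.

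For (4), since $\overline\theta^+$ is strongly nef it admits a Cauchy sequence of semipositive model adelic divisors whose Green functions are plurisubharmonic (in the Chambert-Loir–Ducros sense in the non-archimedean case) and converge locally uniformly to $G^+$; hence $G^+$ is psh. For pluriharmonicity on $\{G^+ > 0\}$, the key ingredients are the functional equation $f^*(dd^c G^+) = \lambda_1 dd^c G^+$ and the vanishing $(\theta^+)^2 = 0$, which gives $(dd^c G^+)^2 = 0$ as a measure on $S_F^{\an,w}$; a standard dynamical argument then shows that the positive measure $dd^c G^+$ is concentrated on $\{G^+ = 0\}$.

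Finally, for (3) and (6): if $G^+(x) > 0$ then $G^+(f^n(x)) = \lambda_1^n G^+(x) \to +\infty$, forcing the forward orbit to leave every compact subset of $S_F^{\an,w}$ by continuity of $G^+$. Conversely, if $G^+(x) = 0$, the equation $G^-\circ f = \lambda_1^{-1} G^-$ gives $G^-(f^n(x)) \leq G^-(x)$ for all $n\geq 0$, so the forward orbit is contained in the set
\[
  K := \bigl\{ y \in S_F^{\an,w} : G^+(y) = 0,\ G^-(y) \leq G^-(x) \bigr\}.
\]
Applying (5) for $G^+$ (blow-up on $\partial S_F \setminus \{p_-\}$) and the analog for $G^-$ (blow-up at $p_-$), the set $K$ is closed in $X_F^{\an,w}$ and bounded away from every boundary point, hence compact in $S_F^{\an,w}$. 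The same reasoning shows that $\{G^+ + G^- = 0\} = \{G^+ = 0\}\cap\{G^- = 0\}$ is closed in $S_F^{\an,w}$ and bounded away from all of $\partial S_F$, hence compact. The main obstacle is (4): the pluriharmonicity on $\{G^+ > 0\}$ requires delicate handling of positive currents and the functional equation, while the rest of the proposition reduces to careful bookkeeping around the convergence estimates already in place.
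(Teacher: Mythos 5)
Your treatment of items (1), (2), (3), (5), and (6) reflects the intended structure: effectivity and the functional equation give (1) and (2) immediately, item (5) is indeed read off from the uniform convergence of $\frac{1}{\lambda_1^N} g_{\overline{\sD}^+}\circ s^N$ on the sets $W^-$ established in Proposition~\ref{prop:convergence-away-from-p-minus}, and items (3) and (6) follow as you describe by combining (5) for $G^+$ and its analog for $G^-$ to read off divergence of $G^+ + G^-$ at every point of the boundary $\partial_X S_F$. The paper itself does not spell this out but refers to the analogous Proposition 6.14 of~\cite{abboudUnlikelyIntersectionsProblem2024}, whose argument goes along these lines.

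The real problem is item (4). You reduce pluriharmonicity on $\{G^+>0\}$ to the claim that the positive measure $dd^c G^+$ is concentrated on $\{G^+=0\}$, and you justify that concentration by invoking $(\theta^+)^2=0$ and the functional equation. This is a circular or at best incomplete reduction. The vanishing $(\theta^+)^2=0$ gives, via Theorem~\ref{thm:total-mass-measure-quasiprojective-setting}, that the Monge--Amp\`ere measure $(dd^c G^+)^2$ has total mass zero on $S_F^{\an,w}$; but Monge--Amp\`ere vanishing of a psh function is \emph{maximality}, not pluriharmonicity, and for a continuous psh function there is no general implication ``$(dd^c u)^2=0$ on an open set $U$ $\Rightarrow$ $dd^c u$ is supported on the zero set of $u$.'' The two facts you cite do not force the current $dd^c G^+$ to vanish on $\{G^+>0\}$; that is precisely what one must \emph{prove}, and the usual route (in the Bedford--Smillie treatment of H\'enon maps and in the construction used in this paper) is the reverse: one first shows pluriharmonicity on $\{G^+>0\}$ by a local argument, and only then concludes that $\Supp(dd^c G^+)\subset\{G^+=0\}$. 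The local argument uses the normal form at $p_+$ from Proposition~\ref{prop:dynamics-loxodromic-automorphism}: if $G^+(x)>0$ then $f^n(x)\to p_+$, and for $n$ large the functions $\frac{1}{\lambda_1^n}g_{\overline{\sD}^+}\circ f^n$ are pluriharmonic in a fixed neighbourhood of $x$ (because near $p_+$ the Green function $g_{\overline{\sD}^+}$ differs from $-\alpha\log|u|-\beta\log|v|$ by a function whose pullback under $f^n$ becomes negligible after dividing by $\lambda_1^n$, while $-\alpha\log|u|-\beta\log|v|$ is pluriharmonic away from the boundary); since these converge uniformly to $G^+$ near $x$, the limit is pluriharmonic there. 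Your sketch skips exactly this step, which is the content of item (4), and the bridge you propose from $(\theta^+)^2=0$ does not furnish it.
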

\begin{proof}
  Items (1) and (2) follow from $\overline{\theta^+}$ being effective and the equality $f^* \overline{\theta^+} =
  \lambda_1 \overline{\theta^+}$. The Green function $G^+$ is plurisubharmonic because $\overline{\theta^+}$ is strongly
  nef. Now, let $v \cM (K)$ be the place over which $w$ lies and let $X_F$ be a completion of $S_F$ that satisfies
  Proposition \ref{prop:dynamics-loxodromic-automorphism}. With the notation of the proof of Proposition
  \ref{prop:convergence-diviseur-model}, we can assume that $v \not \in V[\fin]$. Then, if $U^+_v$ is the open subset
  defined in \eqref{eq:def-ouvert-U-moins}, $U^+_w := U^+_v \cap X_F^{\an,w}$ is a compact $f$-invariant neighbourhood
  of $p_+$ in $X_F^{\an,w}$, indeed ${g_T}_{|X_F^{\an,w}}$ is a constant. Analogously, we have a compact
  $f^{-1}$-invariant neighbourhood $U^-_w$ of $p_-$ in $X_{F}^{\an,w}$ and we can assume that $G^\pm_{U^\pm_w} > 0$ by
  shrinking $U^\pm_w$.

  We show (3), if the forward orbit of $x$ is bounded, then the sequence $(G^+(f^k (x)))_{k \geq 0}$ is bounded by a
  constant $C > 0$ and
  \begin{equation}
    G^+ (x) = \frac{1}{\lambda_1^k} G^+ (f^k (x)) \leq \frac{M}{\lambda_1^k} \xrightarrow[k \rightarrow +
    \infty]{} 0.
  \end{equation}
  Conversely, if the forward orbit of $x$ is not bounded, then it must accumulate to a point $q \in X_F^{\an,w} \setminus
  S_F^{\an,w}$ by compactness. The point $q$ cannot belong to $U^-_w$ because $U^-_w$ is $f^{-1}$-invariant and we would
  get $X \in U^-_w$. Since $U^-_w$ can be taken arbitrary small this is not possible. In particular, $q \neq p_-$ and
  therefore $f(q) = p_+$. Thus, there exists $k_0 \geq 0$ such that $f^{k_0} (x) > 0$ and $G^+(x) =
  \frac{1}{\lambda_1^{k_0}} G^+ (f^{k_0} (x)) > 0$. In fact, this shows that
  \begin{equation}
    \left\{ G^+ > 0 \right\} = \bigcup_{k \geq 0} f^{-k} \left( U^+_v \right).
    \label{eq:carac-G-positif}
  \end{equation}

  To show (5), we can suppose that $g$ is a model Green function of $\theta_{X_F}^+$. We have by Proposition
  \ref{prop:convergence-away-from-p-minus} that $G^+ - g$ extends to a continous function over $X_F^{\an,w} \setminus
  U^-_w$. Since $U^-_w$ can be taken arbitrary small we get the result. For (6), we have that the zero set of $G^+ +
  G^-$ is contained the complement of $f^{-1}(U^+_v) \cup U^-_v$ which is an open neighbourhood of $X_F^{\an,w}
  \setminus S_F^{\an,w}$ so its complement is compact.

  Finally, to show (4), it remains to show that $G^+$ is pluriharmonic over $\left\{ G^+ > 0 \right\}$. Let $H$ be a very
  ample effective divisor in $X_F$ supported outside $S_F$, such a divisor exists by Goodman's result in
\cite{goodmanAffineOpenSubsets1969}. We have by the Hodge index theorem that $\theta^\pm \cdot H > 0$. Let $\overline H$
be the Weil metric of $H$ associated to some set of global generators with Green function $g_{\overline H}$. We can
assume that that $g_{\overline H}$ is pluriharmonic over $U^+_w \cap S_F^{\an,w}$. By Proposition
\ref{prop:convergence-pour-cartier}, we have
\begin{equation}
  \frac{1}{\lambda_1^N} (f^N)^* \overline H \xrightarrow[N \rightarrow +\infty]{} (H \cdot \theta^-) \overline{\theta^+}.
  \label{eq:<+label+>}
\end{equation}
and therefore $\frac{1}{\lambda_1^N}g_{\overline H} \circ f^N$ converges uniformly locally towards $G^+$ over
$S_F^{\an,w}$. Now, if $x \in S_F^{\an,w}$ is such that $G^+ (x) >0$, then for any small enough relatively compact open
neighbourhood $\Omega$ of $x$ in $s_F^{\an,w}$ we have $f^k (\Omega) \subset U^+_w$ for $k$ large enough by
\eqref{eq:carac-G-positif}. Thus, the sequence
\begin{equation}
  \frac{1}{\lambda^n} (g_{\overline H} \circ f^n)_{|\Omega}
  \label{eq:<+label+>}
\end{equation}
is a sequence of pluriharmonic functions converging uniformly to $G^+_{|\Omega}$ which must be pluriharmonic.

Finally, to show (6), we have that $\left\{ G = 0 \right\} = \left\{ G^+ = 0 \right\} \cap \left\{ G^- = 0 \right\}$.
Replace by one of its iterate such that $f^{\pm 1}$ contracts $X_F \setminus S_F$ to $p_\pm$. Then, $W := f^{-1} (U^+_v) \cup
(f^{-1})^{-1} (U^-_v)$ is an open neighbourhood of $(X_F \setminus S_F)^{\an,v}$ and we have that $\left\{ G = 0
\right\} \subset X_F^{\an,v} \subset W$ which is compact as it is a closed subset of a compact subset.
\end{proof}

\begin{dfn}\label{dfn:canonical-adelic-divisor}
  We define a \emph{canonical} adelic divisor of $f$ as any adelic divisor $\overline \theta$ of the form
  \begin{equation}
    \overline \theta := \frac{\overline{\theta^+} + \overline{\theta^-}}{2}.
    \label{eq:<+label+>}
  \end{equation}
  such that $\theta^+ \cdot \theta^- =1$. It is a strongly nef, effective adelic divisor that satisfies $\theta^2 =1$.

  For any other canonical adelic divisor $\overline{\theta '}$ of $f$ we have that there exists $c>0$ such that
  \begin{equation}
    \overline{\theta '} = \frac{c \overline{\theta^+} + \frac{1}{c} \overline{\theta^-}}{2}.
    \label{eq:<+label+>}
  \end{equation}

  In particular for any place $w$, we have $c_1 (\overline \theta)^2_2 = c_1 (\overline{\theta '})^2$ and
  \begin{equation}
    \max(c, 1/c) \overline \theta \leq \overline{\theta '} \leq \max (c,1/c) \overline{\theta}.
    \label{eq:<+label+>}
  \end{equation}
\end{dfn}

\begin{rmq}\label{rmq:normalisation-canonical-divisor}
  One suitable normalisation for $\overline \theta$ is the following. Let $X_F$ be a completion of $S_F$ and $H$ an
  ample divisor supported at infinity. Then, we can assume that $\theta^+ \cdot H = \theta^- \cdot H = c >0$. For such a
  normalisation we have
  \begin{equation}
    \overline \theta = \frac{1}{c} \lim_n \frac{1}{\lambda_1^n} (f^n)^* \overline{\theta^+} + \frac{1}{\lambda_1^n}
    (f^{-n})^* \overline{\theta^-}.
    \label{eq:convergence-canonical-divisor}
  \end{equation}
\end{rmq}

\subsection{Northcott property for $\overline \theta$}\label{subsec:northcott-for-theta}
We conclude this section by showing that the Northcott property for any canonical divisor of $f$ in the arithmetic case
and in the geometric case if $F$ is finitely generated over a finite field $\K$.

\begin{lemme}\label{lemme:model-smaller-than-invariant-divisor}
  Let $X_F$ be a completion of $S_F$, suppose $D \in \Div (X_F)_\R$ and $\overline D$ is a model adelic extension of $D$.
  Suppose $\overline \sD^\pm$ are model adelic extensions of $\theta_{X_F}^\pm$ such that
  \begin{equation}
    \overline D \leq \overline \sD^\pm
    \label{eq:<+label+>}
  \end{equation}
  then, there exists a model vertical divisor $\overline M$ such that
  \begin{equation}
    \overline D \leq \overline \theta^+ + \overline \theta^- + \overline M    \label{eq:<+label+>}
  \end{equation}
\end{lemme}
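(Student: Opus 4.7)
The strategy is to translate the claim into a place-by-place bound on Green functions, then patch together a single model vertical divisor $\overline M$ from finitely many local corrections.

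First, the underlying divisor inequality is automatic: $\overline D \leq \overline \sD^+$ forces $D \leq \theta_{X_F}^+$, and since $\theta_{X_F}^-$ is effective, $D \leq \theta_{X_F}^+ + \theta_{X_F}^-$. Any vertical $\overline M$ restricts trivially to $X_F$, so this persists after adding $\overline M$; only the Green functions need to be controlled.

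Fix a completion $X_F$ satisfying Proposition \ref{prop:dynamics-loxodromic-automorphism}, with distinguished points $p_\pm$, and let $U_w^\pm$ be the neighbourhoods of $p_\pm$ introduced in \S\ref{subsec:conv-away-from}, arranged so that $U_w^+ \cap U_w^- = \emptyset$; set $W_w^\pm := X_F^{\an,w} \setminus U_w^\pm$. The hypothesis gives $g_{\overline D, w} \leq g_{\overline \sD^\pm, w}$ at every place $w$. To replace $g_{\overline \sD^\pm, w}$ by $G_w^\pm$ up to a bounded remainder, the key inputs are Proposition \ref{prop:properties-green-functions}(1), which provides $G_w^\pm \geq 0$, and Proposition \ref{prop:properties-green-functions}(5), by which $G_w^\pm - g_{\overline \sD^\pm, w}$ extends continuously on $X_F^{\an,w} \setminus \{p_\mp\}$. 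On $U_w^-$ I would use the $-$ bound, giving $g_{\overline D, w} \leq g_{\overline \sD^-, w} = G_w^- + h_w^-$ with $h_w^-$ continuous at $p_-$; on $U_w^+$ the symmetric $+$ bound; on $W_w^+ \cap W_w^-$ either bound yields a continuous remainder. In every case $g_{\overline D, w} \leq G_w^+ + G_w^-$ plus a locally bounded continuous remainder, using $G_w^\mp \geq 0$ to absorb the unused summand.

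The crucial sparsity now comes from Proposition \ref{prop:convergence-away-from-p-minus}: for every finite place $w$ above a cofinite open subset $V[\fin] \subset \spec \OO_\K$ (resp.\ $B_\K$ in the geometric setting), $G_w^+ \equiv g_{\overline \sD^+, w}$ on $W_w^-$ and $G_w^- \equiv g_{\overline \sD^-, w}$ on $W_w^+$. Since $U_w^\pm \subset W_w^\mp$, the remainders $h_w^\pm$ vanish identically on the regions where they are used, so $g_{\overline D, w} \leq G_w^+ + G_w^-$ holds at every such place with no correction. Only the archimedean places and the finitely many bad finite places remain; at each such $w$, the compactness of $X_F^{\an,w}$ bounds the remainder above by a finite constant $\tilde C_w$, yielding $g_{\overline D, w} \leq G_w^+ + G_w^- + \tilde C_w$.

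To conclude I would construct $\overline M$ explicitly: at each bad finite place $w$, take an appropriate positive multiple $a_w \sX_w$ of the special fibre so that its model Green function at $w$ dominates $\tilde C_w$ (and vanishes at every other place); at each archimedean place in the arithmetic setting, assign a constant Green function $\geq \tilde C_w$ on the trivial line bundle, which is permissible since at archimedean places every arithmetic divisor is model by convention. Summing these contributions produces a single model vertical adelic divisor $\overline M$ with $g_{\overline M, w} \geq \tilde C_w$ at every bad place and $g_{\overline M, w} = 0$ at the good finite places, finishing the proof. The main obstacle is the careful bookkeeping near $p_\pm$ to see that the hypotheses $\overline D \leq \overline \sD^\pm$ are invoked on complementary regions $U_w^\mp$ where the relevant remainder is controlled, and the verification that the vertical contributions from different bad places assemble into a single model divisor on one projective model $\sX_{B_\K}$.
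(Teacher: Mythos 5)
Your argument is essentially the paper's proof. Both decompose $U_\K^{\an}$ into the $U^\pm$ and $W^\pm$ regions introduced in \S\ref{subsec:conv-away-from}, use the hypothesis $\overline D \leq \overline\sD^\pm$ together with the continuous bounded remainder $G^\pm - g_{\overline\sD^\pm}$ on the appropriate region (absorbing the unused $G^\mp$ via $G^\mp \geq 0$), invoke Proposition \ref{prop:convergence-away-from-p-minus} to see that this remainder vanishes at all but finitely many finite places, and then build $\overline M$ by dominating the finitely many nonzero remainders; your explicit description of $\overline M$ via multiples of special fibers and archimedean constants makes concrete what the paper leaves implicit.
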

\begin{proof}
  We can suppose that $X_F$ satisfies Proposition \ref{prop:dynamics-loxodromic-automorphism}.
  Let $U^\pm$ be the open neighbourhoods of $\overline{\left\{ p_\pm \right\}}^{\an}$ constructed in the proof of
  \ref{thm:invariant-adelic-divisor}. Let $\overline \sE_\cT$ be the boundary divisor of $V$ in $\sB_k$. Let $W^{\pm}$ be
  the complement of $U^\pm$ in $U_T^{\an}$. We have that $U_T^{\an} \subset W^+ \cup W^-$ and by Proposition
  \ref{prop:convergence-away-from-p-minus} we have over $W^\mp \setminus \Supp D$
  \begin{equation}
    G^\pm = g_{\overline \sD^\pm} + h^\pm \geq g_{\overline D} + h^\pm
    \label{eq:<+label+>}
  \end{equation}
  with $h^{\pm}$ continuous and $h^{\pm} \equiv 0$ over almost every finite place $v$ and $\frac{\left| h^{\pm}
  \right|}{g_T}$ is bounded. Thus for $A >0$ large enough we have
  \begin{enumerate}
    \item over $W^- \setminus \Supp D, A g_T \geq h^+$,
    \item over $W^+ \setminus \Supp D, A g_T \geq h^-$,
    \item over $W^+ \cap W^- \setminus \Supp D, A g_T \geq h^+ + h^-$.
  \end{enumerate}
  We set $\overline M = A \overline \sE_\cT$. Since $G^\pm \geq 0$ we get
  \begin{equation}
    g_{\overline D} \leq G^+ + G^- + g_{\overline M}
    \label{eq:<+label+>}
  \end{equation}
  over $\sU_\cT^{\an} \setminus \Supp D$ and the lemma is shown.
\end{proof}
We set $\overline \theta = \frac{\overline \theta^+ + \overline \theta^-}{2}$. It is a strongly nef, effective adelic
divisor over $S_F$ and $\theta^2=1$.

\begin{cor}\label{cor:northcott-property}
  Suppose $F$ is finitely generated over a number field or over a finite field and let $\overline{\theta}$ be a
  canonical divisor of $f$.
  Let $\overline H_1, \dots, \overline H_d$ be a big and nef polarisation of $F$, then $h_{\overline
  \theta}^{\overline H}$ satisfies the Northcott property:
  \begin{equation}
    \# \left\{ p \in S_F (\overline F) : \deg F \leq A, h_{\overline \theta}^{\overline H} (p) \leq B \right\} < + \infty.
    \label{eq:<+label+>}
  \end{equation}

  In particular, for every $x \in S_F (\overline F)$, \begin{equation}
    h_{\overline \theta}^{\overline H} (x) = 0 \Leftrightarrow x \in \Per(f).
    \label{eq:<+label+>}
  \end{equation}
\end{cor}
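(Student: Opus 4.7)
The plan is to reduce the Northcott property for $\overline\theta$ on the affine surface $S_F$ to the Northcott property already known (Theorem \ref{thm:northcott-property}) for an ample adelic line bundle on a projective completion $X_F$ of $S_F$. The bridge is Lemma \ref{lemme:model-smaller-than-invariant-divisor} together with the fact that, by Proposition \ref{prop:dynamics-loxodromic-automorphism}, one can choose a completion $X_F$ on which $\Supp\theta_X^{\pm} = X_F \setminus S_F$, so every boundary component appears with positive coefficient in both $\theta_X^{+}$ and $\theta_X^{-}$.

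First I would invoke Goodman's theorem on such an $X_F$ to pick an effective ample divisor $H$ with $\Supp H = X_F \setminus S_F$, and fix a constant $C > 0$ with $H \leq C\theta_X^{+}$ and $H \leq C\theta_X^{-}$ as $\R$-divisors. I then choose semipositive model adelic extensions $\overline\sH$ of $H$ and $\overline\sD^{\pm}$ of $\theta_X^{\pm}$, with the Green functions scaled so that $\overline\sH \leq C\overline\sD^{+}$ and $\overline\sH \leq C\overline\sD^{-}$ as model adelic divisors. Lemma \ref{lemme:model-smaller-than-invariant-divisor} applied to $C^{-1}\overline\sH$ then produces a model vertical adelic divisor $\overline M$ with $\overline\sH \leq 2C\,\overline\theta + C\,\overline M$. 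Passing to Moriwaki heights with respect to $\overline H$ gives, for every $x \in S_F(\overline F)$,
\[
h_{\overline\sH}^{\overline H}(x) \;\leq\; 2C\, h_{\overline\theta}^{\overline H}(x) \,+\, C\, h_{\overline M}^{\overline H}(x).
\]
Since $\overline M$ is a model vertical adelic divisor, its Green function is globally bounded on $X_\K^{\an}$ and identically zero outside finitely many places, so the integral expression of Corollary \ref{cor:height-as-integral} shows that $h_{\overline M}^{\overline H}(x) = O(\deg(x))$. As $\overline\sH$ is semipositive with ample underlying line bundle on the projective variety $X_F$, Theorem \ref{thm:northcott-property} gives the Northcott property for $h_{\overline\sH}^{\overline H}$, and the displayed inequality transfers it to $h_{\overline\theta}^{\overline H}$ on $S_F(\overline F)$.

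For the characterization of points of height zero, the equivariances $f^{*}\overline\theta^{+} = \lambda_{1}\overline\theta^{+}$ and $(f^{-1})^{*}\overline\theta^{-} = \lambda_{1}\overline\theta^{-}$ translate by functoriality of Moriwaki heights (Proposition \ref{prop:functoriality}) to $h_{\overline\theta^{\pm}}^{\overline H}(f(x)) = \lambda_{1}^{\pm 1}\, h_{\overline\theta^{\pm}}^{\overline H}(x)$; if $f^{N}(x) = x$, iterating this identity forces $h_{\overline\theta^{\pm}}^{\overline H}(x) = 0$, using $\lambda_{1} > 1$ and the nonnegativity of both heights (since $\overline\theta^{\pm}$ are strongly nef and effective). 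Conversely, $h_{\overline\theta}^{\overline H}(x) = 0$ implies $h_{\overline\theta^{\pm}}^{\overline H}(x) = 0$, hence $h_{\overline\theta}^{\overline H}(f^{n}(x)) = 0$ for every $n \geq 0$; the forward iterates all have degree $\deg(x)$, so by the Northcott property just established the forward orbit is finite and, $f$ being an automorphism, $x \in \Per(f)$. The main obstacle I anticipate is the very first step, namely producing the inequality $\overline\sH \leq 2C\,\overline\theta + C\,\overline M$ at the level of adelic divisors: this requires a sufficiently careful choice of Green functions for $\overline\sH$ and $\overline\sD^{\pm}$ so that the hypothesis of Lemma \ref{lemme:model-smaller-than-invariant-divisor} is genuinely met, using that the dominant boundary behaviour of $g_{\overline\sD^{\pm}}$ matches that of $g_{\overline\sH}$ near every component of $X_F \setminus S_F$.
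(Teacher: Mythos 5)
Your proof is correct and follows essentially the same strategy as the paper: apply Goodman's theorem on a completion where $\Supp\theta_{X}^{\pm}=X_F\setminus S_F$, use Lemma \ref{lemme:model-smaller-than-invariant-divisor} to bound a (semipositive, ample) model adelic divisor by $\overline\theta$ up to a vertical term, and invoke the projective Northcott property (Theorem \ref{thm:northcott-property}). The only minor deviation is in the forward implication of the height-zero characterisation: you deduce $h_{\overline\theta}^{\overline H}(x)=0$ for periodic $x$ from the functional equation $h_{\overline\theta^{\pm}}^{\overline H}(f(x))=\lambda_1^{\pm1}h_{\overline\theta^{\pm}}^{\overline H}(x)$, while the paper deduces it from the pointwise vanishing $G^{\pm}_w(x)=0$ of Proposition \ref{prop:properties-green-functions}; both are valid.
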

\begin{proof}
  Let $X_F$ be a completion of $S_F$ that satisfies Proposition \ref{prop:dynamics-loxodromic-automorphism}.
  By Goodman's theorem
  \cite{goodmanAffineOpenSubsets1969}, there exists an ample effective divisor $L \in \DivInf(X_F)$ such that $\Supp L =
  X_F \setminus S_F$. Since $\Supp \theta_{X_F}^\pm = X_F \setminus S_F$, we get that there exists $m \geq 1$ such that
  $\frac{1}{m}L \leq \frac{1}{2} \theta_{X_F}^\pm$. Thus,
  there exists a model adelic extension $\overline L$ of $L$ and model adelic extensions $\overline \sD^\pm$ of
  $\theta_{X_F}^\pm$ such that $\frac{1}{2m} \overline L \leq \overline \sD^{\pm}$. By Lemma
  \ref{lemme:model-smaller-than-invariant-divisor}, we have
  \begin{equation}
    \frac{1}{m}h_{\overline L}^{(B; \overline H)} \leq h_{\overline \theta}^{(B; \overline H)} + O(1).
    \label{eq:<+label+>}
  \end{equation}
  over $S_F (\overline F)$ and the Northcott property follows from Theorem \ref{thm:northcott-property} and
  \ref{thm:northcott-property-geometric}.

  If $x \in \Per(f)$, then for every $w \in (\spec F)^{\an}$ we have $G^\pm_w (x) = 0$ by Proposition
  \ref{prop:properties-green-functions} and thus $h_{\overline \theta}^{\overline H} (x) = 0$. Conversely, if
  $h_{\overline \theta}^{\overline H} (x) = 0$, then since $\overline \theta^\pm$ is effective we get $h_{\overline
  \theta^\pm}^{\overline H} (x) = 0$ and therefore for every $k \in \Z$, $h_{\overline
  \theta^\pm}^{\overline H} (f^k(x)) = 0$ by the $f$-invariance of $G^\pm$.
  By the Northcott property the set $\left\{ f^k (x) : k \in \Z \right\}$ must be finite and therefore $x \in
  \Per(f)$.
\end{proof}

\begin{cor}\label{cor:enough-looking-at-h-+}
  Suppose $F$ is a finitely generated field over a number field or over a finite field. Let $\overline H_1, \dots,
\overline H_d$ be a big and nef polarisation of $F$ and let $x \in S_F (\overline F)$. The following are equivalent
\begin{enumerate}
  \item $x \in \Per (f)$.
  \item $h_{\overline \theta}^{\overline H} (x) = 0$.
  \item $h_{\overline \theta^+}^{\overline H} (x) = 0$.
  \item $h_{\overline \theta^-}^{\overline H} (x) = 0$.
\end{enumerate}
\end{cor}
\begin{proof}
  (1) and (2) are equivalent by Corollary \ref{cor:northcott-property}. Write $h^\pm := h_{\overline
  \theta^\pm}^{\overline H}$ such that $h_{\overline \theta}^{\overline H} =: h = h^+ + h^-$. Since $\overline
  \theta^\pm$ are effective, we have that $h^+, h^- \geq 0$. Therefore, $h(x) =0 \Rightarrow h^+ (x) = h^- (x) = 0$. So
  (2) implies (3) and (4). It suffices to show that (3) implies (1) and (4) implies (1). We show it for (3). Suppose
  $h^+ (x) = 0$, then
  \begin{equation}
    h (f^n(x)) = h^+ (f^n(x)) + h^- (f_n(x)) = \lambda_1^n h^+ (x) + \frac{1}{\lambda_1^n} h^- (x) \leq h^- (x).
    \label{eq:<+label+>}
  \end{equation}
  Thus, the sequence $(f^n(x))_{n \geq 0}$ has bounded height $h$ and by Corollary \ref{cor:northcott-property}, this
  sequence is finite, thus $x \in \Per (f)$.
\end{proof}

\section{Periodic points and equilibrium measure}\label{sec:equidistribution-periodic-points}
\subsection{Equidistribution of periodic points}
Let $F$ be a finitely generated field over its prime field. Let $U_F$ be a quasiprojective variety over $F$.
Let $w \in (\spec F)^{\an}$, let $(x_n)$ be a sequence of $U_F (\overline F) \subset U_{F_w} (\overline F_w)$ and let $\mu_w$ be
a measure on $U_F^{\an,w}$. We say that the Galois orbit of $(x_n)$ is equidistributed with
respect to $\mu_w$ if the sequence of measures
\begin{equation}
  \delta(x_n) := \frac{1}{\deg(x_n)} \sum_{x \in \Gal(\overline F / F) \cdot x_n} \delta_x
  \label{<+label+>}
\end{equation}
weakly converges towards $\mu_w$, where $\delta_x$ is the Dirac measure at $x$.

We say that a sequence of points $(x_n)$ of $U_F (\overline F)$ is \emph{generic} if no subsequence
of $(x_n)$ is contained in a strict subvariety of $U_F$. In particular, a generic sequence is
Zariski dense.

\begin{lemme}\label{lemme:generic-sequence}
  Let $U_F$ be a projective variety over a finitely generated field $F$ over its prime field and let $(x_n)$ be a Zariski dense
  sequence of $U_F(\overline F)$, then one can extract a generic subsequence of $(x_n)$.
\end{lemme}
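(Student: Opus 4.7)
The proof is a diagonal extraction, and the only non-trivial input is a countability statement. Since $F$ is finitely generated over its prime field ($\Q$ or $\F_p$), the field $F$ is countable, hence so is $\overline F$. The projective variety $U_F$ is Noetherian, so on any affine open its coordinate ring is a countable Noetherian ring, and every closed subscheme is cut out by a finitely generated ideal. Consequently the set of strict closed subvarieties of $U_F$ (equivalently, of $U_{\overline F}$, since any such is defined over a finitely generated subfield of $\overline F$, of which there are countably many) is countable.

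I enumerate the strict closed subvarieties as $Y_1, Y_2, \dots$ and build the subsequence by induction. At stage $k$, having chosen $n_1 < \cdots < n_{k-1}$, I pick $n_k > n_{k-1}$ with $x_{n_k} \notin Y_1 \cup \cdots \cup Y_k$. The crucial point enabling this is that only finitely many terms of $(x_n)$ can lie in a fixed proper closed subvariety $Z \subsetneq U_F$: otherwise the Zariski closure of $(x_n)$ would be contained in $Z$ together with the finitely many other terms, which is a proper closed subvariety, contradicting Zariski density.

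For any strict closed subvariety $Y \subsetneq U_F$ one has $Y = Y_m$ for some $m$, and for every $k \geq m$ the construction gives $x_{n_k} \notin Y_m$. Hence only finitely many terms of $(x_{n_k})$ lie in $Y$, and in particular no subsequence of $(x_{n_k})$ is entirely contained in $Y$. The subsequence $(x_{n_k})$ is therefore generic in the sense of the definition preceding the lemma.

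The only step that could be considered an obstacle is the countability of the family of strict closed subvarieties, which is exactly where the hypothesis that $F$ is finitely generated over its prime field enters. Once that is granted, the inductive extraction is a straightforward diagonal argument and carries no further subtlety.
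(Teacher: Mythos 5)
Your approach is the same diagonal extraction as the paper's, and your construction and final verification are correct. But the statement you isolate as ``the crucial point'' is false. You assert that only finitely many terms of $(x_n)$ can lie in a fixed proper closed subvariety $Z$, and you argue this by saying that otherwise the Zariski closure of $(x_n)$ would be contained in $Z$ together with the remaining terms. That inference is wrong: the negation of ``only finitely many terms lie in $Z$'' is ``infinitely many terms lie in $Z$'', not ``all but finitely many lie in $Z$''. And the claim itself fails: interleave a Zariski dense sequence of $\A^2$ with a sequence dense in the line $\{y=0\}$; the result is Zariski dense yet has infinitely many terms on that line.

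What the extraction actually needs at stage $k$ — and what Zariski density does give — is the weaker fact that infinitely many terms of $(x_n)$ lie \emph{outside} $Y_1 \cup \cdots \cup Y_k$: if all but finitely many lay inside, the Zariski closure of the whole sequence would be that finite union together with finitely many extra points, a proper closed subset, contradicting density. With that replacement the induction picks $n_k$ as you describe. Your concluding verification does not use the false claim at all: for each $m$ and each $k \geq m$ you have arranged $x_{n_k} \notin Y_m$, so at most the first $m-1$ terms of the extracted subsequence lie in $Y_m$, and no infinite subsequence can be contained in any $Y_m$ (applying the pigeonhole to irreducible components if your enumeration ran over all, not just irreducible, strict closed subvarieties). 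The countability input via finite generation of $F$ is exactly as in the paper.
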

\begin{proof}
  The set of strict irreducible subvarieties of $U_F$ is countable because $F$ is countable.
  Let $(Y_q)_{q \in \N}$ be the set of strict irreducible subvarieties of $U_F$. We construct a
  generic subsequence $(x_q')_{q \in \N}$ as follows. Set $Y'_q = \bigcup_{k \leq q} Y_k$. This is a
  strict subvariety of $U_F$. Let $n(1)$ be such that
  $x_{n(1)} \not \in Y_1 = Y_1'$ and suppose we have constructed $n(1) < \dots < n(q)$ such that $x_{n(i)} \not \in
  Y_i'$. Since $(x_n)$ is Zariski dense, there exists an integer $n(q+1) > n (q)$ such
  that $x_{n(q)} \not \in Y_q'$. This defines an increasing sequence $n(q)$ and we set $x_q ' = x_{n(q)}$, The sequence
  $(x_q ')$ is a subsequence of $(x_n)$ which is clearly generic.
\end{proof}

Let $S_F$ be a normal affine surface over $F$ with $\QAlb (S_F = 0)$. Let $f \in \Aut(S_F)$ be loxodromic. Following
Definition \ref{dfn:canonical-adelic-divisor} we write $\overline \theta_f$ for a canonical divisor of $f$. For every
place $w \in (\spec F)^{\an}$, we write $\mu_{f,w}$ for the equilibrium measure of $\overline \theta_f$ over
$S_F^{\an,w}$. We also write for any polarisation $\overline H$ of $F$, $h_f^{\overline H} := h_{\overline \theta_f}$.

\begin{thm}\label{thm:same-equilibrium-measure-almost-everywhere}
  Let $F$ be a finitely generated field over its prime field. Let $S_F$ be a normal affine surface over $F$ with
  $\QAlb (S_F) = 0$ and let
  $f,g \in \Aut(S_F)$ be loxodromic automorphisms.
  Let $\overline H$ be a polarisation of $F$ defined over $\sB$ satisfying the Moriwaki condition.
  If $\Per(f) \cap \Per(g)$ is Zariski dense, then
  \begin{enumerate}
    \item If $\Gamma \subset \sB$, $\mu_{f,\Gamma} = \mu_{\Gamma,g}$.
    \item If $F$ is finitely generated over a number field $\K$ and $\K \hookrightarrow \C$ is an embedding, then for
      $\mu_{\overline H}$-almost every $b \in \sB(\C)$
      \begin{equation}
        \mu_{f,b} = \mu_{g,b}.
        \label{eq:<+label+>}
      \end{equation}
  \end{enumerate}
\end{thm}
In particular, in the geometric setting we have equality of the equilibrium measures at every place thanks to
Proposition \ref{prop:intersection-not-trivial}.
\begin{proof}
  Let $(x_n)$ be a Zariski dense sequence of $\Per(f) \cap \Per(g)$. By Lemma
  \ref{lemme:generic-sequence}, we can suppose that $(x_n)$ is generic. To apply Theorem \ref{thm:arithm-equid}, we need
  to show that $h_f^{\overline H} (S_F) = h_g^{\overline H}(S_F) = 0$. To
  do that we apply Theorem 5.3.3 of \cite{yuanAdelicLineBundles2023}. Namely, let
  \begin{equation}
    e(S_F, f)^{\overline H} := \sup_{U \subset S_F} \inf_{p \in U} h_{f}^{\overline H} (p)
    \label{<+label+>}
  \end{equation}
  where $U$ runs through open subsets of $S_F$. This quantity is called the \emph{essential minimum} of $\overline
  \theta_f$. Since we have a generic sequence of periodic points, we get $e (S_F, f) = 0$. Theorem 5.3.3 of
  \cite{yuanAdelicLineBundles2023} states that
  \begin{equation}
    e (S_F, f)^{\overline H} \geq h_f^{\overline H} (S_F).
    \label{<+label+>}
  \end{equation}
  Therefore we get $h_g^{\overline H} (S_F) = h_f^{\overline H} (S_F) = 0$. The first assertion follows directly from
  the first assertion of Theorem \ref{thm:arithm-equid}.

  We show the second assertion. Let $\K$ be a number field over which $F$ is finitely generated. Fix a quasiprojective
  model $X_\K$ of $S_F$ with a
  morphism $X_\K \rightarrow B_\K$ where $B_\K$ is a projective model of $F$ over $\K$ over which $\overline H$ is defined.
  Fix an embedding $\K \hookrightarrow \C$ and let $\psi : X_\K (\C) \rightarrow \R$ be a continuous function with
  compact support. Notice that for every $b \in B_\K (\C), \psi_{|X_b}$ also has compact support.
     Let $\epsilon >0$ and define
  \begin{equation}
    U_\epsilon = \left\{ b \in B_\K(\C) : \int_{X_b} \psi_{|X_b} \mu_{f,b} \geq \int_{X_b} \psi_{|X_b} \mu_{g,b} +
    \epsilon\right\}.
    \label{eq:<+label+>}
  \end{equation}
  The set $U_\epsilon$ is measurable. Since $\mu_{\overline H}$ is a finite Radon measure, for every
  $\delta > 0$, there exists a compact subset
  $K_\delta$ and an open subset $T_\delta$ such that $K_\delta \subset U_\epsilon \subset T_\delta \subset
  B_\K(\C)$ and
  \begin{equation}
    \mu_{\overline H}(U_\epsilon) - \delta \leq \mu_{\overline H} (K_\delta) \leq \mu_{\overline H}(T_\delta) \leq
    \mu_{\overline H} (U_\epsilon) + \delta.
    \label{eq:<+label+>}
  \end{equation}
Let ${T'}_\delta \Subset T_\delta$ be a relatively compact open neighbourhood of $K_\delta$ in
$T_\delta$. There exists a continuous function $\phi : B_\K (\C) \rightarrow \R$ such that $\Supp \phi \subset
T_\delta$, $\phi_{|{T'}_\delta} \equiv 1$ and $0 \leq \phi
\leq 1$. Now, by Theorem \ref{thm:arithm-equid}, we have
\begin{align}
  \lim_m \frac{1}{\deg x_m} \sum_{y \in \Gal(\overline F / F) \cdot x_m} \int_{B_\K(\C)} \phi(b) \psi(y(b)) d
  \mu_{\overline H}(b) &= \int_{B^{\an,v}} \left( \int_{X_b}\phi(b) \psi_{|X_b} \mu_{f,b} \right) d \mu_{\overline H}(b) \\
  &= \int_{B_\K(\C)} \left( \int_{X_b} \phi(b) \psi_{|X_b} \mu_{g,b} \right) d \mu_{\overline H}(b).
  \label{eq:egalite-integrale}
\end{align}
And we have that the difference of the two integrals on the right hand side satisfies
\begin{equation}
  0 \geq \epsilon \mu_{\overline H} (K_\delta) - 2 \delta M \geq \epsilon \left( \mu_{\overline H}(U_\epsilon) - \delta
  \right) - 2 \delta M
  \label{eq:<+label+>}
\end{equation}
where $M = \max_{X_\K(\C)} \psi$. Letting $\delta \rightarrow 0$, we get $\mu_{\overline H} (U_\epsilon) = 0$.

Therefore, by taking only rational $\epsilon > 0$ and reversing the role of $f$ and $g$ we get that for $\mu_{\overline
H}$-almost every $b \in B_\K(\C)$
\begin{equation}
  \int_{X_b} \psi_{|X_b} \mu_{f,b} = \int_{X_b} \psi_{|X_b} \mu_{g,b}.
  \label{eq:egalite-integrale-local}
\end{equation}
Now, the set of continuous functions with compact support over $X_\K(\C)$ is separable. Let
$(g_i)$ be a dense sequence, then \eqref{eq:egalite-integrale-local} holds for every $g_i$ and for $\mu_{\overline H}$-almost every
$b \in B_\K(\C)$ and therefore by density $\eqref{eq:egalite-integrale-local}$ holds for every compactly supported
continous function $\phi : X_\K(\C) \rightarrow \R$. Now it is clear by the Stone-Weierstrass theorem that for every
$b \in B_\K(\C)$ the set
\begin{equation}
  \left\{ \phi_{|X_b} : \phi : X_\K(\C) \rightarrow \R, \text{continous with compact support} \right\}
  \label{eq:<+label+>}
\end{equation}
is dense in the set of continous function with compact support from $X_b$ to $\R$.
\end{proof}

Write $G_f$ for the Green function of $\overline \theta_f$. We call the support of $\mu_{f,w}$ the \emph{Julia} set of $f$
and the set $\{G_{f,w} = 0\}$ the \emph{generalised Julia set} of $f$. It is clear that $\Supp \mu_{f,w} \subset
\partial \left\{ G_{f,w} = 0 \right\}$. We show here that $\left\{ G_{f,w} = 0 \right\}$ is the polynomial convex hull
of $\Supp \mu_{f,w}$ for every $w \in (\spec F)^{\an}$. This generalises Lemma 6.3 of
\cite{dujardinDynamicalManinMumford2017}.
\begin{thm}\label{thm:polynomial-convex-hull}
  For every $w \in (\spec F)^{\an}$, $\left\{ G_{f,w} = 0
  \right\}$ is the largest compact subset $J_w$ of $S_F^{\an,w}$ containing $\Supp \mu_{f,w}$ such that for any
  $P \in \OO(S_F)$
  \begin{equation}
    \label{eq:23}
    \sup_{\Supp \mu_{f,w}} |P|_w = \sup_{J_w} |P|_w
  \end{equation}
\end{thm}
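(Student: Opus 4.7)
The plan is to recognize that the statement is equivalent to the assertion that $\{G_{f,w}=0\}$ is the $\OO(S_F)$-polynomial hull of the Julia set,
\[
\widehat{\Supp\mu_{f,w}} := \left\{z\in S_F^{\an,w} : |P(z)|_w \leq \sup_{\Supp\mu_{f,w}}|P|_w \;\forall\, P\in\OO(S_F)\right\}.
\]
Indeed, any compact $J_w$ satisfying the conclusion is automatically contained in $\widehat{\Supp\mu_{f,w}}$ by definition, so it suffices to prove $\widehat{\Supp\mu_{f,w}} = \{G_{f,w}=0\}$. I would split this into the two inclusions and treat them separately.

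For $\widehat{\Supp\mu_{f,w}} \subset \{G_{f,w}=0\}$, the argument is purely dynamical. Since $f^*G^{\pm}=\lambda_1^{\pm 1}G^{\pm}$, the measure $\mu_{f,w}=dd^cG^+\wedge dd^cG^-$ is $f$-invariant, hence so is $\Supp\mu_{f,w}$. For $p\in\widehat{\Supp\mu_{f,w}}$, $P\in\OO(S_F)$, and $n\in\Z$, the hull condition applied to $P\circ f^n\in\OO(S_F)$ together with the $f$-invariance of $\Supp\mu_{f,w}$ gives
\[
|P(f^n(p))|_w \leq \sup_{\Supp\mu_{f,w}}|P\circ f^n|_w = \sup_{\Supp\mu_{f,w}}|P|_w.
\]
Taking $P=t_i$ for coordinates from an embedding $S_F\hookrightarrow\A^N$, the forward orbit $f^n(p)$ stays in a set on which $\max_i|t_i|_w$ is bounded; dividing the limit defining $G^+$ by $\lambda_1^n$ forces $G^+(p)=0$, and the same argument applied to $f^{-1}$ yields $G^-(p)=0$.

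For the harder inclusion $\{G_{f,w}=0\}\subset\widehat{\Supp\mu_{f,w}}$, the strategy is a pluripotential domination argument. Fix $P\in\OO(S_F)$ and set $M_P := \sup_{\Supp\mu_{f,w}}\log|P|_w$. I would prove the global inequality
\[
\log|P|_w \;\leq\; a\,G^+_w + b\,G^-_w + M_P \qquad \text{on } S_F^{\an,w}
\]
for suitable $a,b\geq 0$, from which the conclusion follows on $\{G_{f,w}=0\}$ where $G^\pm=0$. To construct $a,b$, I would exploit Proposition \ref{prop:dynamics-loxodromic-automorphism}: on an appropriate completion $X_F$, the boundary collapses to points $p_\pm$ with explicit monomial or Hénon-type normal forms, so $\log|P|_w$ has controlled logarithmic growth near $p_\pm$, while $G^{\pm}_w$ blow up at $p_{\mp}$ at rates dictated by $f^*G^\pm=\lambda_1^{\pm 1}G^\pm$. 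Choosing $a,b$ large enough makes $u := \log|P|_w - aG^+_w - bG^-_w - M_P$ plurisubharmonic, bounded above, and $\leq 0$ near $p_\pm$ and on $\Supp\mu_{f,w}$ (since there $G^\pm=0$ and $\log|P|\leq M_P$). The domination principle for the Monge--Ampère measure $\mu_{f,w}=dd^cG^+\wedge dd^cG^-$, namely Bedford--Taylor theory at archimedean places and the Chambert-Loir--Ducros theory \cite{chambert-loirFormesDifferentielles2012} at non-archimedean ones, then propagates $u\leq 0$ to all of $S_F^{\an,w}$.

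The main obstacle is this final global domination step on the non-compact space $S_F^{\an,w}$, and specifically obtaining the quantitative growth bound $\log|P|\leq a G^++bG^-+M_P$ at infinity. In the Hénon case \cite{dujardinDynamicalManinMumford2017} this is handled by explicit bidegree and Bézout estimates; in the present generality the normal forms of Proposition \ref{prop:dynamics-loxodromic-automorphism} provide the right substitute, but one must carefully match the equivariance rates of $G^{\pm}$ against the pole orders of $P$ along the divisors contracted by $f^{\pm N_0}$. As a fallback, the comparison can instead be extracted from the local arithmetic Hodge index theorem (Theorem \ref{thm:arithm-hodge-index-local}) applied to a strongly compactly supported truncation of $\max(\log|P|-M_P-aG^+-bG^-,\,0)$, reducing the inequality to the Calabi-type rigidity of Corollary \ref{CorCalabiThm}.
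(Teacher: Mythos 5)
Your overall skeleton is right: reduce the statement to identifying $\{G_{f,w}=0\}$ with the $\OO(S_F)$--holomorphic hull $\widehat{\Supp\mu_{f,w}}$, and prove the two inclusions separately. Your argument for $\widehat{\Supp\mu_{f,w}}\subset\{G_{f,w}=0\}$ via $f$-invariance of $\Supp\mu_{f,w}$ and boundedness of the orbit is logically the contrapositive of what the paper does (the paper picks a $P$ not vanishing near $p_+$ and evaluates along $f^n(x)\to p_+$), and both are fine.

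The hard inclusion is where your primary plan breaks. You propose to show $\log|P|_w\le a\,G^+_w+b\,G^-_w+M_P$ by arguing that $u:=\log|P|_w-aG^+_w-bG^-_w-M_P$ is psh and then invoking a domination principle for $\mu_{f,w}=dd^cG^+\wedge dd^cG^-$. But $u$ is \emph{not} psh: $dd^c u=\delta_{\div(P)}-a\,dd^cG^+-b\,dd^cG^-$ has nonzero negative part wherever $dd^cG^\pm$ charges, precisely on $\Supp\mu_{f,w}$ --- which is exactly where you need the inequality. The functions $G^\pm$ are pluriharmonic only on $\{G^\pm>0\}$, so subtracting them destroys plurisubharmonicity on the Julia set, and no version of Bedford--Taylor or Chambert-Loir--Ducros domination applies to $u$ as written. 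The fix, which is what the paper actually does, is to stay in the cone of nef metrics: instead of subtracting, one compares the \emph{maximum} $\max\bigl(\log^+(|P|/(C_0+\epsilon)),\,g_N\bigr)$ (a Weil metric) with $g_N=\frac{T}{\lambda_N}\bigl(g_{\overline H}\circ f^N+g_{\overline H}\circ f^{-N}\bigr)$, both of which are semipositive. The growth inequalities $\log|P|\le a(G^+_f+G^-_f)+M$ and $\epsilon g_0\le\frac{a}{2}(G^+_f+G^-_f)+M'$ are then used to show their difference is strongly compactly supported (not psh, but a difference of nef, which is the correct input to the theory), and equality of Monge--Amp\`ere masses forces $c_1(\overline\theta)^2=c_1(\overline\theta')^2$. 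Only then does Corollary \ref{CorCalabiThm} give $G=G'$ and hence $\sup_{\{G=0\}}|P|\le C_0+\epsilon$.

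Your fallback sentence does gesture at the right machinery (Calabi rigidity of Corollary \ref{CorCalabiThm}), but even there the object $\max(\log|P|-M_P-aG^+-bG^-,\,0)$ is a difference of psh, not psh, and you would still need the growth estimates to know it is compactly supported, the mass comparison to verify the hypothesis $c_1(\overline L_1)^n=c_1(\overline L_2)^n$, and the reduction to $\C_w$ (so that you can choose $t$ with $|t|=C_0+\epsilon$ when the residue value group is not dense). None of these steps is automatic, and they are the actual content of the paper's proof; as written your proposal has a genuine gap at the domination step and leaves the Calabi route at the level of a hint.
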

\begin{proof}
  Fix a place $w$. The adelic divisor $\overline \theta \in \hat \DivInf (S_F / k)$ ($k = \OO_\K$ or $\K$) induces an
  arithmetic divisor $\overline \theta_w \in \hat \DivInf (S_{F_w})$ over $S_{F_w}$ when restricting to the place $w$ in
  the sense of \cite[\S3.6]{yuanAdelicLineBundles2023}.

  Now we show that $\left\{ G_{f,w}=0 \right\}$ is the largest compact subset satisfying this property.
  If $G_{f,w}(x) > 0$, then suppose for example that $G_{f,w}^+(x) > 0$, then $f^n (x)$ converges towards $p_+$ with the
  notations of the proof of Theorem \ref{thm:invariant-adelic-divisor}. Let $P \in
  \OO(S_F)$ such that $\left| P \right| > 0$ over a sufficiently small neighbourhood of $p_+$. We have
\begin{equation}
  \left| P (f^n (x)) \right| \xrightarrow[n \rightarrow + \infty]{} + \infty.
  \label{eq:<+label+>}
\end{equation}
Let $C_0 = \max_{\Supp \mu_{f,w}} \left| P \right|$, then there exists $N_0 > 0$ such that if we
set $Q = (f^{N_0})^* P$, we get
\begin{equation}
  \left| Q(x) \right| = \left| P (f^{N_0}(x)) \right| > \max_{\Supp \mu_f} \left| Q \right| = \max_{\Supp \mu_f} \left| P
  \right|.
  \label{eq:<+label+>}
\end{equation}
This shows the caracterisation of $\left\{ G_{f,w} = 0 \right\}$.

We now show \eqref{eq:23}.
 Let $P \in \OO (S_F)$. Let $X_{F}$ be a
completion of $S_{F}$ that satisfies Proposition \ref{prop:dynamics-loxodromic-automorphism}. Since the support of
$\theta^{\pm}_{X_{F}}$ is the whole boundary $X_{F} \setminus S_{F}$, there
exists a number $a > 0$ such that $-\div_{X_{F}}(P) \leq a \theta_{X_{F}}$.
By Lemma \ref{lemme:model-smaller-than-invariant-divisor}, there exists $M, M' > 0$ such that
  \begin{align}
    \label{eq:inequality-1}\log \left| P \right| &\leq a (G^+_f + G^-_f) + M \\
    \forall \epsilon << 1, \quad \epsilon g_0 &\leq \frac{a}{2} (G^+_f + G^-_f) + M'
    \label{eq:inequality-2}
  \end{align}
  over $S_F^{\an}$, so that in particular $\div(P) \leq a \theta$. The theorem will follow from the following lemma. 
 
 \begin{lemme}\label{lemme:max-Green-functions}
   Let $P \in \OO (S_F)$ and let $w \in \spec (F)^{\an}$. Let $G_w$ be the Green function of $\overline \theta$ over
   $S_{F,w}$. If $C_0 = \supp_{\mu_{f,w}} \left| P \right|$ and $\epsilon >
   0$, then there for every integer $T > 0$ large enough the function 
   \begin{equation}
     T G_w ' = \max \left(\log^+ \frac{|P|_w}{C_0 + \epsilon}, T G_w\right)
     \label{eq:G-prime}
   \end{equation}
   is a semipositive Green function of $T \theta$ over $S_{F,w}$.
 \end{lemme}

 Suppose the lemma. Then around $\Supp \mu_{f,w}$ we have that $T G_w' = T G_w$ therefore $dd^c (T G_w')^2 = dd^c (T G_w)^2 =
 \mu_{f,w}$ in an open neighbourhood of $\Supp \mu_{f,w}$. But since these two Green functions are Green functions of
 $\Theta$ we have that the total mass of $dd^c (T G_w')^2$ is equal to $\Theta^2$ which is the total mass of $dd^C
 (T G_w)^2$. Since these are both positive measures we get the equality of the measures over $S_{F,w}^{\an}$. But now by
 \eqref{eq:inequality-1} we have that $T G_w = T G_w'$ outside a compact subset of $S_{F,w}^{\an}$. By Theorem C of
 \cite{abboudLocalVersionArithmetic2025} we have that $T G_w = T G_w'$ over $S_{F,w}^{\an}$. In particular checking this
 equality over the set $ \left\{ G_w = 0 \right\}$ we get that $\max_{ \left\{ G_w = 0 \right\}} \left| P \right| \leq C_0 +
 \epsilon$. We get the result by letting $\epsilon \rightarrow 0$.

 \begin{proof}[Proof of Lemma \ref{lemme:max-Green-functions}]
 We work over the place $w$ and drop the index $w$ in the notations. Let $H$ be a very ample divisor over $X_F$ such that $\Supp H = X_{F} \setminus S_{F}$. Using Remark
  \ref{rmq:normalisation-canonical-divisor}, we can assume that \eqref{eq:convergence-canonical-divisor} holds. We have
  that $\Gamma(X_{F}, H) \hookrightarrow \OO(S_{F})$.
Let $P_1, \dots , P_r$ be a set of generators of
$\Gamma(X_{F},H)$ and let $\overline H$ be the semipositive extension of $H$ equipped with the Weil metric induced by
$P_1, \dots, P_r$.
By Theorem \ref{thm:invariant-adelic-divisor} we have that
\begin{equation}
  \frac{1}{c} \frac{1}{\lambda_1^N}g_{\overline H} \circ f^{\pm N} \rightarrow G^\pm_f
  \label{eq:convergence-green-functions}
\end{equation}
over $S_{F}^{\an}$ with respect to the boundary topology. 
We define the sequence of numbers $\lambda_N := \lfloor \lambda_1^N \rfloor$ where $\lfloor x \rfloor$ is the integral
part of $x$. We also have that
\begin{equation}
  \frac{1}{c} \frac{1}{\lambda_N} g_{\overline H} \circ f^{\pm N} \rightarrow G^\pm_f
  \label{eq:<+label+>}
\end{equation}
for the boundary topology.
Let $T$ be an integer (we will specify the value of $T$ later), define the Green function
\begin{equation}
  g_N = \frac{T}{\lambda_N} \left( g_{\overline H} \circ f^N + g_{\overline H} \circ f^{-N} \right).
  \label{eq:<+label+>}
\end{equation}
There exists a sequence $\epsilon_N \rightarrow 0$ such that
\begin{equation}
  - \epsilon_N D_0 \leq c \theta - \frac{1}{\lambda_N} \left( (f^N)^* H + (f^{-N})^* H \right) \leq \epsilon_N D_0
  \label{eq:<+label+>}
\end{equation}
in $\Winf(S_F)$. Since $- \div_{X_F} (P) \leq a \theta$, we have that for $N$ large enough, $P^{\lambda_N} \in
\Gamma \left( T(f^N)^* H + T(f^{-N})^* H  \right) $ whenever $Tc - \frac{a}{2} \geq 1$. Now let $C_0 = \max_{\Supp
\mu_f} \left| P \right|$ and let $\epsilon >0$. There exists $t_n \in F_w^\times$ and $r_n \in \Q$ such that $\left|
t_n \right|^{r_n} \rightarrow C_0 + \epsilon$. Let $a_n$ be a positive integer such that $a_n r_n \in \Z$. 

Let $X_N$ be a completion of $S_F$ above $X_F$ such that the lifts of
$f^N, f^{-N} : X_F \dashrightarrow X_F$ are regular maps $X_N \rightarrow X_F$, we have that
\begin{equation}
  \left(\frac{P^{a_N}}{t_N^{a_N r_N}}\right)^{ \lambda_N } \in \Gamma (X_N, a_N T \left[(f^N)^* H + T(f^{-N})^* H\right])
  \label{eq:<+label+>}
\end{equation}
and the function
\begin{equation}
  g_N ' = \frac{1}{a_N} \max \left( \log^+ \left( \frac{\left| P^{a_N} \right|}{t_N^{a_N r_N}} \right),
  \frac{a_N T}{\lambda_N} \left(g_{\overline H} \circ f^N + g_{\overline H} \circ f^{-N}\right)\right)
  \label{eq:<+label+>}
\end{equation}
is a semipositive model Green function of $\frac{T}{\lambda_N} \left( (f^N)^* H + (f^{-N})^* H \right)$ as it is the
Green function associated to the Weil metric induced by the set of generators
\begin{equation}
  \left\{ \frac{P^{a_N}}{t_N^{a_N r_N}}, (f^N)^* P_1^{T a_N}, \dots, (f^N)^* P_r^{T a_N}, (f^{-N})^* P_1^{T a_N}, \dots,
  (f^{-N})^* P_r^{T a_N} \right\}
  \label{eq:<+label+>}
\end{equation}
of $\Gamma (X_N, T a_N \left((f^N)^* H + (f^{-N})^* H \right))$. It converges with respect to the boundary topology
towards the function $T G'$ from \eqref{eq:G-prime}.
\end{proof}
\end{proof}

\subsection{Proof of Theorem \ref{bigthm:rigidity-periodic-points}}
If $\QAlb (S_F) \neq 0$, then we have shown that $S_F = \G_m^2$ and the theorem is already proven in that case, so we
suppose $\QAlb (S_F) = 0$.

Suppose $\car F = 0$.
Let $S_F$ be an affine surface over  $F$ with two loxodromic
automorphisms $f,g$ such that $\Per(f) \cap \Per(g)$ is Zariski dense. By Corollary
\ref{cor:periodic-points-are-algebraic}, we can suppose that $F$ is finitely generated over $\Q$, let $\K$ be the
algebraic closure of $\Q$ in $F$ and let $\overline H$ be an arithmetic polarisation of $F$ satisfying the Moriwaki
condition defined over a projective model $\sB$.
By Theorem \ref{thm:same-equilibrium-measure-almost-everywhere}, for every $\Gamma \subset \sB$ we have $\mu_{f,\Gamma}
= \mu_{g,\Gamma}$ and for $\mu_{\overline H}$-almost
every $b \in \sB(\C)$ we have $\mu_{f,b} = \mu_{g,b}$. By Theorem \ref{thm:polynomial-convex-hull}, this implies
$\left\{ G_{f,\Gamma} = 0 \right\} = \left\{ G_{g, \Gamma} = 0 \right\}$ and $\left\{ G_{f,b} = 0 \right\} = \left\{
G_{g,b} = 0 \right\}$ for $\mu_{\overline H}$-almost every $b \in \sB(\C)$.
Therefore, by Proposition \ref{prop:moriwaki-height-as-integral}, we have the equality of sets
\begin{equation}
  \left\{ h_f^{\overline H} = 0 \right\} = \left\{ h_g^{\overline H} = 0 \right\}.
  \label{<+label+>}
\end{equation}
And this holds for every $\overline H \in \hat \Pic(F / \OO_\K)_\mod$ satisfying the Moriwaki condition. We conclude by
\begin{prop}
  If $f$ is a loxodromic automorphism of $S_F$, then
  \begin{equation}
    \Per(f) = \bigcap_{\overline H} \left\{ h_f^{\overline H} = 0 \right\}
    \label{eq:<+label+>}
  \end{equation}
  where $\overline H$ runs through every arithmetic polarisation satisfying the Moriwaki condition.
\end{prop}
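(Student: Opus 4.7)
The forward inclusion is the easy direction. If $x \in \Per(f)$, then $G^\pm_{f,w}(x) = 0$ at every place $w \in (\spec F)^{\an}$ by Proposition \ref{prop:properties-green-functions}, so the Green function $g_{\overline \theta,w}$ vanishes on the Galois orbit of $x$ for every $w$, and Corollary \ref{cor:height-as-integral} gives $h_{\overline \theta}^{\overline H}(x) = 0$ for any polarisation $\overline H$, in particular for every $\overline H$ satisfying the Moriwaki condition.

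For the reverse inclusion, the plan is to use the Deligne pairing to convert the hypothesis into a statement about $\hat\Pic(F/\OO_\K)$ and then apply the Northcott property with a big and nef polarisation (which need not satisfy the Moriwaki condition). Fix $x \in S_F(\overline F)$ such that $h_f^{\overline H}(x) = 0$ for every Moriwaki polarisation $\overline H$. Form the vector height
\begin{equation}
\mathfrak{h}_{\overline \theta}(x) = \frac{1}{\deg(x)}\langle \overline \theta_{|x}\rangle \in \hat\Pic(F / \OO_\K)
\end{equation}
via Proposition \ref{prop:deligne-pairing}. Since $\overline \theta$ is strongly nef, so is $\mathfrak{h}_{\overline \theta}(x)$, and the identity $h_{\overline \theta}^{\overline H}(x) = \mathfrak{h}_{\overline \theta}(x) \cdot \overline H_1 \cdots \overline H_d$ translates the hypothesis into the vanishing of $\mathfrak{h}_{\overline \theta}(x) \cdot \overline H^d$ against every Moriwaki $\overline H$.

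By Proposition \ref{prop:moriwaki-condition}, this forces $\mathfrak{h}_{\overline \theta}(x)$ to be numerically trivial in $\hat\Pic(F/\OO_\K)$. Pick now any big and nef arithmetic polarisation $\overline H'$ of $F$; numerical triviality gives $h_{\overline \theta}^{\overline H'}(x) = 0$. Since $\overline \theta^+$ and $\overline \theta^-$ are both strongly nef and effective, each of $h_{\overline \theta^\pm}^{\overline H'}(x)$ is nonnegative, so both vanish. The functional equations $f^*\overline \theta^+ = \lambda_1 \overline \theta^+$ and $(f^{-1})^*\overline \theta^- = \lambda_1 \overline \theta^-$ then yield
\begin{equation}
h_{\overline \theta^+}^{\overline H'}(f^n(x)) = \lambda_1^n h_{\overline \theta^+}^{\overline H'}(x) = 0, \qquad h_{\overline \theta^-}^{\overline H'}(f^{-n}(x)) = 0,
\end{equation}
for every $n \geq 0$, hence $h_{\overline \theta}^{\overline H'}(f^n(x)) = 0$ for all $n \in \Z$. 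Since $f$ is defined over $F$, the degrees $[F(f^n(x)):F]$ remain bounded by $\deg(x)$, and the Northcott property (Corollary \ref{cor:northcott-property}, applied to the big and nef $\overline H'$) concludes that the $f$-orbit of $x$ is finite, i.e.\ $x \in \Per(f)$.

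The main obstacle is the step converting Moriwaki-tested vanishing into true numerical triviality: this is precisely the role of Proposition \ref{prop:moriwaki-condition}, and using it requires that $\mathfrak{h}_{\overline \theta}(x)$ genuinely land in $\hat\Pic(F/\OO_\K)$ as a nef class, which is guaranteed by the strong nefness of $\overline \theta$ combined with the functoriality of the Deligne pairing stated in Proposition \ref{prop:deligne-pairing}. Once this is in hand, decoupling $\overline \theta$ into $\overline \theta^+$ and $\overline \theta^-$ and exploiting $f$-equivariance reduces the problem to an application of Northcott against a polarisation unconstrained by the Moriwaki condition.
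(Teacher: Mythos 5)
Your proof is correct and follows the paper's own argument almost exactly: both pass through the vector height $\mathfrak{h}_{\overline\theta}(x)$, invoke Proposition \ref{prop:moriwaki-condition} to upgrade Moriwaki-tested vanishing to numerical triviality, decompose $\overline\theta$ into the nef pieces $\overline\theta^\pm$ to get vanishing of each, propagate along the $f$-orbit via the functional equations, and finish with Corollary \ref{cor:northcott-property} for a big and nef polarisation. The only cosmetic difference is that the paper keeps the triviality at the level of $\hat\Pic(F/\OO_\K)$ for all $f^k(x)$ before specialising to a polarisation, while you first test a fixed big nef $\overline H'$ and then use $f$-equivariance of the scalar heights; both routes are legitimate.
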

\begin{proof}
  The inclusion $\subset$ is clear. We show the other one. Let $\mathfrak h_f := \mathfrak h_{\overline \theta_f}$ be
  the vector valued height of $\overline \theta_f$.
  Let $x \in S_F(\overline F)$ be in the set on the right side. Then, $\mathfrak h_f (x) \in \hat \Pic(F / \OO_\K)$ satisfies
  \begin{equation}
    \mathfrak h_f (x) \cdot \overline H^d = 0
    \label{eq:<+label+>}
  \end{equation}
  for every $\overline H$ satisfying the Moriwaki condition. By Proposition \ref{prop:moriwaki-condition} this implies
  that $\mathfrak h_f (x)$ is numerically trivial. Since $\overline \theta_f = \overline \theta^+_f + \overline
  \theta^-_f$ and both are nef, this implies that $\mathfrak h_{\overline \theta^\pm} (x)$ is numerically trivial and by the
  $f$-invariance this holds for every $f^k (x)$ for $k \in \Z$. Now, if we pick a big and nef arithmetic polarisation of
  $F$, we get by Corollary \ref{cor:northcott-property} that the set $\left\{ f^k (x) : k \in \Z \right\} \subset \left\{
  h_{f}^{\overline H} = 0 \right\}$ is finite and thus $x \in \Per(f)$.
\end{proof}

For the geometric case, we suppose $\car F = p > 0$. By Corollary \ref{cor:periodic-points-are-algebraic}, we can always
assume that $F$ is finitely generated over its prime field which is a finite field. If $F$ is a finite field, then we
have $\Per (f) = S_F (\overline F) = \Per (g)$ and
Theorem \ref{bigthm:rigidity-periodic-points} is obvious. We suppose that $F$ has transcendence degree $\geq 1$ over its
prime field. By Proposition
\ref{prop:intersection-not-trivial} and Theorem \ref{thm:same-equilibrium-measure-almost-everywhere} we
have for every $v \in \cM(F), \mu_{f,v} = \mu_{g,v}$ and therefore the equality of sets
\begin{equation}
  \left\{ G_{f,v} = 0 \right\} = \left\{ G_{g,v} = 0 \right\}.
  \label{eq:<+label+>}
\end{equation}
Thus we get by Proposition \ref{prop:moriwaki-height-as-integral} the equality of sets
\begin{equation}
  \left\{ h_f^{\overline H} = 0 \right\} = \left\{ h_g^{\overline H} = 0 \right\}
  \label{eq:<+label+>}
\end{equation}
for any big and nef geometric polarisation $\overline H$ of $F$.
By Corollary \ref{cor:northcott-property}, this implies $\Per(f) = \Per(g)$.

\subsection{Strong rigidity for Hénon maps} We now prove Theorem \ref{bigthm:strong-rigidity-henon}.
Let $f,g$ be two Henon maps over a finitely generated field $F$ over $\Q$. Let $\K$ be the algebraic closure of $\Q$ in
$F$ and let $\overline \theta_f, \overline \theta_g \in \hat \DivInf(\A^2_F/ \OO_\K)$ be the canonical divisors
of $f$ and $g$ such that $\theta_{\P^2}^\pm = L_\infty$ for $f,g$ (this is possible using Remark
\ref{rmq:normalisation-canonical-divisor}). Let $w$ be an archimedean place of $F$ where we have $\left\{G_{g,w} = 0
\right\} = \left\{ G_{f,w} = 0 \right\}$. Such a place exists thanks to Theorem
\ref{thm:same-equilibrium-measure-almost-everywhere} and Theorem \ref{thm:polynomial-convex-hull}. Following the proof
of \cite{dujardinDynamicalManinMumford2017}, we can show that $G_{f,w} ' := \max(G^+_{f,w}, G^-_{f,w})$ is a nonnegative
psh function over $\C^2$ with logarithmic growth and zero set $\left\{ G_{f,w}' = 0 \right\} = \left\{ G_{f,w} = 0
\right\}$. Thus, $G_{f,w}'$ must be the Green-Siciak function of the compact set $\left\{ G_{f,w} = 0 \right\} = \left\{
G_{g,w} = 0 \right\}$ and since it is unique we must have $\max (G^+_f, G^-_f) = \max\left( G_g^+, G_g^- \right)$. The
paragraph after Lemma 6.5 in \cite{dujardinDynamicalManinMumford2017} shows that $f,g$ must share a common iterate.

\bibliographystyle{alpha}
\bibliography{biblio}

\end{document}